\newtheorem{thm}{Theorem}[section]
\newtheorem{proposition}[thm]{Proposition}
\newtheorem{prop}[thm]{Proposition}
\newtheorem{lemma}[thm]{Lemma}
\newtheorem{rmk}[thm]{Remark}
\newtheorem{remark}[thm]{Remark}
\newtheorem{observation}[thm]{Observation}
\newcommand{\ov}{\overline}
 \newcommand  \newset  {{\mathcal V}} 
\newcommand{\dist}{\mathrm{dist}}
\newcommand{\C}{\mathbb{C}}
\newcommand{\A}{\mathcal{A}}
\newcommand{\D}{\mathbb{D}}
\newcommand{\E}{\mathbb{E}}
\newcommand{\N}{\mathbb{N}}
\newcommand{\R}{\mathbb{R}}
\newcommand{\bbH}{\mathbb{H}}
\newcommand{\wt}{\widetilde}
\newcommand{\eps}{\varepsilon}
\def\E{\mathbb{E}}
\DeclareMathOperator{\SLE}{SLE}
\def\cU{\mathcal{U}}
\def\cP{\mathcal{P}}
\def\cD{\mathcal{D}}
\def\cB{\mathcal{B}}
\def\cA{\mathcal{A}}
\renewcommand{\Re}{\op{Re}}
\renewcommand{\Im}{\op{Im}}
\def\@rst #1 #2other{#1}
\newcommand\MR[1]{\relax\ifhmode\unskip\spacefactor3000 \space\fi
		\MRhref{\expandafter\@rst #1 other}{#1}}\newcommand{\MRhref}[2]{\href{http://www.ams.org/mathscinet-getitem?mr=#1}{MR#2}}
\def\MR#1{\href{http://www.ams.org/mathscinet-getitem?mr=#1}{MR#1}}
\newcommand{\aryb}{\begin{eqnarray*}}
\newcommand{\arye}{\end{eqnarray*}}
\def\alb#1\ale{\begin{align*}#1\end{align*}}
\newcommand{\eqb}{\begin{equation}}
\newcommand{\eqe}{\end{equation}}
\newcommand{\eqbn}{\begin{equation*}}
\newcommand{\eqen}{\end{equation*}}
\numberwithin{equation}{section}
\newcommand{\op}{\operatorname}
\newcommand{\rta}{\rightarrow}
\newcommand{\wh}{\widehat}
\newcommand{\bdy}{\partial}
\newcommand{\old}[1]{{}} 
\newcommand{\invprob} {{\bf Q}}
\newcommand {\eset}  {{\emptyset}}
\newcommand {\Prob} {{\bf {P}}}
\newcommand {\ball} {{\mathcal B}}
\newcommand{\e}  {{\bf e}}
\newcommand{\greencut}  {G^{\rm cut}}
\newcommand {\Cont}{{\rm Cont}}
\newcommand {\Vol}{{\rm Vol}}
\newcommand {\state} {{\mathcal X}}
\newcommand {\rev} {{R}}
\newcommand {\n}{{\bf n}}
\newcommand {\newstate}  {{\mathcal Y}}
\newcommand {\newstatetwo}  {{\mathcal Z}}
\newcommand {\doublestate} {{\mathcal J}}
\newcommand {\dyadic}  {{\mathcal D}}
\title{Minkowski content of Brownian cut points}
\author[1]{Nina Holden\thanks{Partially supported by a fellowship from the Research Council of Norway and partially supported by Dr.\ Max R\"ossler, the Walter Haefner Foundation, and the ETH Z\"urich Foundation.}}
\author[2]{Gregory F. Lawler\thanks{Research supported by NSF grant DMS-1513036}}
\author[3]{Xinyi Li\thanks{Research supported by NSFC (No.\ 12071012) and the National Key R\&D Program of China (No.\ 2020YFA0712900)}}
\author[4]{Xin Sun\thanks{Supported by a Junior Fellow award from the Simons Foundation, and NSF Grants DMS-1811092 and DMS-2027986}}
\affil[1]{\small ETH Z\"urich}
\affil[2]{\small The University of Chicago}
\affil[3]{\small Beijing International Center for Mathematical Research, Peking University}
\affil[4]{\small University of Pennsylvania}
\date{\small \today}
\begin{document}

\maketitle
\begin{abstract}
Let $W(t)$, $0\leq t\leq T$, be a Brownian motion in $\mathbb{R}^d$, $d=2,3$.  We say that $x$ is a cut point for $W$ if $x=W(t)$ for some $t\in(0,T)$ such that $W [0,t) $ and $W (t,T]$ are disjoint.  
In this work, we prove that a.s.\ the Minkowski content of the set of cut points for $W$ exists and is finite and non-trivial. 

\bigskip

Soit $W(t)$, $0\leq t\leq T$ un mouvement brownien sur $\mathbb{R}^d$, $d=2,3$. On dit que $x$ est un point de coupure pour $W$ si $x=W(t)$ pour un certain $t\in(0,T)$ tel que $W [0,t) $ et $W (t,T]$ sont disjointes. Dans cet article, nous montrons que le contenu de Minkowski de l'ensemble des points de coupure pour $W$ existe p.s., et qu'il est p.s.\ fini et non trivial.

\end{abstract}

\section{Introduction}\label{sec:intro}
Let $W(t)$  be a Brownian motion taking values in $\R^d$.  We say that $t\in (0,T)$ is a cut time and that $W(t)$ is a cut point for the path $W[0,T]$
 up to time $T > t$ if $W[0,t) \cap W(t,T] = \eset$.
  It was first shown by Dvoretsky, Erd\H{o}s, and Kakutani \cite{DEK} 
 that if $d \geq 4$,  then all points visited by $W$ are cut points
 for $W[0,\infty)$.  If $d=1$, then cut points are the
 same as points of increase or decrease, and
 it was first shown in \cite{DEK2} that there
 are a.s.\ no such points.     If $d=2,3$,   the typical point is not a cut point, but as shown first in \cite{Bur1989}, with probability one there are cut points.  This result was improved in \cite{Law1996},
 where it was shown that with probability one the
 set of cut points in $W[0,1]$ (or in $W[0,\infty)$
 for $d=3$) has box and Hausdorff dimension  $
 \delta = \delta_d:= d - \eta$, where $\eta = 
 \eta_d = \xi + (d-2)$
  and 
 $\xi=\xi_d$ is the intersection exponent, which
 satisfies\footnote{Sometimes $ \xi/2$ is called the
 	intersection exponent; in this paper, we will always refer to $\xi$ as the
 	intersection exponent.} (see the end of this section for the notation $\asymp$)
 \[   \Prob[W[0,1-\epsilon^2] \cap W[1 +
  \epsilon^2,2] = \emptyset] \asymp \epsilon^{\xi}, \;\;\;\;\; \epsilon \downarrow 0.
  \]
  That paper did not establish the value of
  the exponent.  For $d=2$, it was shown in
  \cite{LSWTrilogy1,LSWTrilogy2} that $\xi_2=
  5/4$, proving a prediction made by Duplantier
  and Kwon \cite{DKwon}.  The value of the intersection exponent is
  not known for $d=3$, although it is known
  rigorously \cite{BurLaw1990,Law1998} that
  $1/2 < \xi_3 < 1$, and numerical simulations
  \cite{BurLawPol} suggest a value around $.58$.
  In this paper, we make a significant improvement
  on these results by establishing the existence
  and nontriviality of the Minkowski content for
  the set of cut points.

Let us describe our result precisely.
   For the remainder of this paper,
$d$ will always be either $2$ or $3$, and $\xi,
\eta,\delta$ will be as in the previous paragraph.
  We write $\Gamma$ for the set of   (continuous) curves
  $\gamma:[0,t_\gamma] \rightarrow \R^d$ and
   $\Gamma_{x,y}$ for the set of such curves with $\gamma(0) = x, \gamma(t_\gamma) = y$.
  Unless stated otherwise, we assume that the
  time duration $t_\gamma < \infty$.  If
  $\gamma \in \Gamma$, 
  let
  \begin{equation}\label{eq:cpdef}
{\cal T}={\cal T}_\gamma := \{t\in[0,t_\gamma]: \gamma[0,t)\cap \gamma(t,t_\gamma]=\emptyset\}\quad \mbox{and} \quad {\cal A} = {\cal A}_\gamma  :=\gamma(\cal{T})
\end{equation} 
denote sets of {\it cut times} and  {\it cut points} of $\gamma$, respectively. 

   Let $\e = (1,0)
\mbox{ or } (1,0,0)$ be the unit vector whose
first component equals one.   Let $\mu
 =\mu_{0,\e}$ stand for the Brownian path measure on $\Gamma_{0,\e}$
 (see Section \ref{brownsec} for definition and intuition)  corresponding to Brownian paths from 0 to $\e$. 
If $d=2$,   $\mu$ is an infinite measure while
it is finite for $d=3$.  However, although $\mu$ is infinite for $d=2$, as we will see,
 if $V$ is any
closed set disjoint from $\{0,\e\}$,
$\mu[\{\A \cap V \neq \eset\}]< \infty,$ i.e., the mass of the set of paths for which there are cut points in $V$ is finite.
The result in \cite{Law1996} implies that
$\mu$-almost everywhere (that is, except on a set of curves of $\mu$-measure zero),
${\rm dim}_h({\cal A})=\delta .$
We will give a similar but stronger result about the Minkowski content of ${\cal A}$.

Minkowski content is a natural way to define  (random) ``fractal'' measures
on random fractal sets.  
For every bounded Borel set $A \subset \R^d$, define
the  
$\delta$-dimensional Minkowski content of   $A$    by
\begin{equation}\label{eq:firstdef}
{\rm Cont}_{\delta}(A) =\lim_{r\to\infty} {\rm Cont}_{\delta}(A;r),
\end{equation}
where
\begin{equation}
{\rm Cont}_{\delta}(A;r)=e^{r(d-\delta)} \,\Vol \big\{z: {\rm dist}(z,{A})\leq e^{-r}\big\},
\end{equation}
provided that the limit exists.  
Here $\Vol$ stands for the $d$-dimensional volume  (Lebesgue measure)
in $\R^d$. 

We then focus on cut points and express ${\rm Cont}_{\delta}(\cA;r)$ in an alternative way which is easier to analyze. Let $I_s(z) =
 I_s(z;\cA)$ be the indicator function
 of $\{\dist(z,\cA) \leq e^{-s}\}$, let $J_s(z)
 = e^{\eta s} \, I_s(z)$.
and for $V$ a Borel subset of $\mathbb{R}^d$ set 
\begin{equation}\label{eq:JVdef}
   J_{s,V} = \int_V
   J_s(z) \, dz ,\;\;\;\;
\mbox{ and } \;\;\;\; J_V = \lim_{s \rightarrow \infty} J_{s,V},
\end{equation} 
    provided the limit exists. 
It is immediate by definition that  ${\rm Cont}_{\delta}(\cA;r)=J_{r,\R^d}$ and (if the limits exist) ${\rm Cont}_{\delta}(\cA)=J_{\R^d}$.

The cut-point Green's functions (one- and two-point) are defined by (provided the limits exist)
\begin{equation}\label{eq:G1def}
\greencut(z)=\lim_{s \rightarrow \infty
} \, \mu[J_s(z)] 
\end{equation}
and
\begin{equation}\label{eq:G2def}
\greencut(z,w)=\lim_{s \rightarrow \infty
} \, \mu\left[J_s(z) \, J_s(w)\right],  
\end{equation}
respectively. Here we write $\mu[X]$ for 
 the integral of $X$ with respect to $\mu$. 
 
{Finally, we define
\[   
\dist(x_1,\ldots,x_n) =  \min_{j \neq k} |x_j - x_k|
\] 
(with $|\cdot|$ the Euclidean norm) for $x_1,\dots,x_n\in\mathbb{R}^d$.}
 
We are now ready to state our first theorem on the existence of the one-point Green's function  for cut points.
 \begin{thm}  \label{theorem.dec16.2}
There exists $u >0$ such that  if $e^{-b}  =
\dist(0, z,\e)>0$, then
the limit in  \eqref{eq:G1def} 
exists. Furthermore,   if $s \geq b+1 $,
\begin{equation}\label{eq:T1c1}
\mu\left[J_s(z)\right] 
    = \greencut(z) 
     \, [1+O(e^{(b-s)u})],
\end{equation}
where for some constant $c_d>0$ depending only on the dimension $d$,
\begin{equation}\label{eq:T1c2}
 \greencut(z) = c_d |z|^{-\eta} \, |\e-z|^{-\eta}.
\end{equation}
     \end{thm}
     
Before stating our second theorem on the existence of the two-point Green's function, we need to introduce some notations. {For $n\in\mathbb{Z}^+$, let
$\dyadic_n$} denote the following set of half-open-half-closed dyadic cubes $V$ in $\R^d$  
 of the form
\begin{equation}\label{eq:dyadicV}
  V= \left(\frac{k_1}{2^n}, \frac{k_1+ 1}{2^n}\right]
   \times \cdots \times \left(\frac{k_d}{2^n}, \frac{k_d+ 1}{2^n}\right],\;k_1,\ldots,k_d\in\mathbb{Z}.
\end{equation}
{    Let $\dyadic_n^o$ be the collection of $V\in \dyadic_n$ such that $\dist(0,V)\wedge\dist(\e,V)\geq 2\,{\rm diam}(V)=\sqrt{d}2^{-n+1}$ and write 
\begin{equation}\label{eq:dyadico}
\dyadic=\cup_{n\in\mathbb{Z}^+}\dyadic_n^o.
\end{equation}} Define for $x\in\mathbb{R}^d$ and $V\subset\mathbb{R}^d$ 
\[
\dist(x,V)=\inf_{y\in V}\dist(x,y)\quad\mbox{ and }\quad{\rm diam}(V)=\sup_{x,y\in V}\dist(x,y).
\]
 Note that if  $V \in \dyadic$,  then for $z,w \in V$,
$|z-w| \leq \sqrt 3/2 \, \dist(0,\e,z,w)$.

    \begin{thm}  \label{groundhog}
     There exists $u> 0$
    such that  if $V \in \dyadic$ and     $z,w \in V$ with $|z-w| = e^{-b} > 0$,
    the limit in  \eqref{eq:G2def} exists. Furthermore,   
   if $s \geq b+1$, $0 \leq \rho \leq 1$,
\begin{equation}\label{eq:T2c1}
\mu\left[J_s(z) \, J_{s +\rho}(w)
   \right]  
    = \greencut(z,w)  \, 
      [1+O( e^{(b-s)u})].
\end{equation}    
   Moreover,  there exists
$0<c_V<C_V < \infty$ such that  
     \begin{equation}\label{eq:T2c2}
c_V \, |z-w|^{-\eta} \leq    \greencut(z,w) \leq C_V \, |z-w|^{-\eta}. 
\end{equation}\end{thm}
We could establish the limit and up-to-constant asymptotics of $\greencut(z,w)$
for all $z \neq w$ but this is all that
    we will need for the next theorem, and it makes the proof slightly easier to restrict to
    $z,w \in V \in \dyadic$.

  \begin{thm} \label{theorem.dec16}  Suppose  $d=2,3$ and $V$ is a {bounded Borel} subset of $\R^d$ such that $\partial V$ has zero $(d-\epsilon)$-Minkowski
  content for some $\epsilon > 0$.  Then, except on a set of curves of zero $\mu$-measure (abbreviated as $\mu$-a.s.\ below) the Minkowski content $ 
 \Cont_{\delta}(\A_\gamma \cap V)$ exists, is finite, and 
 equals $J_V$ (see (\ref{eq:JVdef}) for its definition). {In the meanwhile, $\mu$-a.s., $\Cont_{\delta}(\A_\gamma \cap \partial V )=0$}.  Moreover,
 \[  \mu\left[\Cont_{\delta}(\A_\gamma \cap V)\right]
  =  \mu[J_V] = \int_V \greencut(z) \, dz.\]
 In particular, $\mu$-a.s., this holds for all   $V \in \dyadic$. 
Moreover,
 if $V \in \dyadic$, 
 \[ 
   \mu\left[\Cont_{\delta}(\A_\gamma \cap V)^2\right] = \mu[J_V^2] =  \int_V \int_V \greencut(z,w) \, dz\, dw < \infty.\]
 \end{thm}
{This theorem, along with a standard procedure (see Appendix \ref{se:ApA} for more details), allows us to induce a measure out of the Minkowski content of $\cA$:
{
\begin{equation}\label{eq:Borel0}
\begin{split}
&\mbox{$\mu$-a.s.\ there }\mbox{exists a random non-atomic Borel measure $\nu$ which is }\\
&\mbox{supported on $\cA$ such that for all $V\in\cD$, } \nu(V)=\Cont_\delta (\cA\cap V);
\end{split}
\end{equation}}
moreover, for all Borel sets $U,U'$,
 \begin{equation}\label{eq:Borel}
  \mu\left[\nu(U)\right] = \int_U G^{\rm cut}(z) \, dz\ \textrm{and}\ \mu\left[\nu(U)\nu(U')\right] = \iint_{U\times U'} G^{\rm cut}(z,w) \, dz\, dw.
 \end{equation}}
 
The derivation of Theorem \ref{theorem.dec16} from
\eqref{eq:G1def}, \eqref{eq:G2def}, and the
estimates on the Green's function is essentially
the same as that followed in \cite{Law2015,LawRez} where the Minkowski content of the Schramm-Loewner evolution (SLE) path is established.   We do this in 
 Section \ref{gensec} where we give a general
 result showing that very sharp estimates on
 Green's function convergence give results about
 Minkowski content.  As in \cite{Law2015,LawRez},
 the hard work is to establish the result about the
 Green's function; in our case, this is
 Theorems \ref{theorem.dec16.2} and \ref{groundhog}.
 
 To prove these theorems, it suffices to show that there
 exists $c,u >0$ such that if $s \geq b+1$ and 
 $0 \leq \rho \leq 1,$
\begin{equation}  \label{dec16.10}
 \left|e^{   \rho\eta}
 \, \mu[I_{s+\rho}(z)] -   \mu[I_{s }(z)]\right| \leq c \, 
 \mu[I_s(z)] \, e^{(b-s)u}, 
 \end{equation}
\begin{equation}  \label{dec16.11}
 \left|e^{   \rho\eta}
 \, \mu[ I_s(z) \, I_{s+\rho}(w) ] -   \mu[I_{s }(z)
  \, I_{s }(w)]\right| \leq c \, 
 \mu[I_s(z) \, I_{s }(w)] \, e^{(b-s)u}, \mbox{ and}
 \end{equation}
 \begin{equation}  \label{dec16.12}
   \left|e^{  2 \rho\eta}
 \, \mu[ I_{s+\rho}(z) \, I_{s+\rho}(w) ] -   \mu[I_{s }(z)
  \, I_{s }(w)]\right| \leq c \, 
 \mu[I_s(z) \, I_{s }(w)] \, e^{(b-s)u}.
 \end{equation}
 
 We will now briefly outline the idea for the proofs of \eqref{dec16.10} -- \eqref{dec16.12}.  Let us write
 $\ball_k(z)$ for the closed ball of radius
 $e^k$ about $z$ with boundary
 $\partial \ball_k(z)$.  Choose large $s$ and let
 us write $\ball = \ball_{-s}(z)$.  If $\gamma$
 is a curve from $0$ to $\e$ that intersects $\ball$,
 we can decompose $\gamma$ as
 \[    \gamma = \gamma^- \oplus \omega \oplus
   \gamma^+ \]
where $\gamma^-$ is $\gamma$ stopped at the
 first visit to $\partial \ball$; $\gamma^+$ is $\gamma$ after the last exit from $\partial \ball$;
 and $\omega$ is the
 piece that connects $\gamma^-$ and $\gamma^+$.  We start by choosing
 $\gamma^-,\gamma^+$ to be independent
 Brownian motions starting at $0,\e$, respectively,
 conditioned so they reach $\partial \ball$ (this
 conditioning is not needed if $d=2$); we then condition on the event that $\gamma^+ \cap \gamma^- =\eset$; finally, given $(\gamma^-,\gamma^+)$ we consider the ratio of the measure of the set of paths $\omega$ with the property that $\gamma$ has a cut point contained in $\ball_{-s-\rho}$ to the measure of those that have
 a cut point
 in $\ball$.  If $s$ is large, then this ratio should depend only on the part of  $(\gamma^-,\gamma^+)$ near $\ball$ and this conditional distribution should be very close to a (appropriately scaled) distribution corresponding to Brownian  paths conditioned not to intersect.
 
 Similarly to get \eqref{dec16.11} and \eqref{dec16.12} we decompose paths that first
 visit $\ball_s(z)$ and then visit $\ball_s(w)$ 
 as
 \[    \gamma = \gamma^- \oplus \omega^- \oplus
    \gamma^* \oplus \omega^+ \oplus  \gamma^+
    ,\]
 where $\gamma^-$ is $\gamma$ stopped at the
 first visit to $\ball_s(z)$; $\gamma^+$ is $\gamma^\rev$ after the last exit
 from $\ball_s(w)$; $\gamma^*$ is an excursion
 between $\partial \ball_s(z)$ and $\partial \ball_s(w)$; and $\omega^-,
 \omega^+$ are the
 paths that connect.  (This representation is unique only if there is a single excursion. But in our case this will be
 true with very high probability.)  We start by
 choosing $(\gamma^-,\gamma^*,\gamma^+)$ and
 then conditioning that the three paths are
 mutually disjoint.   If $s$ is large (so that $e^{-s}$
 is small compared to $|z-w|$), we can hope that the paths near $z$ and $w$ look like independent samples from this invariant measure.  If this
 is true (with sufficiently good error estimate), then we can get our result.

 The main technical tool to establish this is the invariant measure on Brownian motions conditioned on creating a cut point and the exponential convergence to this invariant measure.

The choice of $\mu_{0,\e}$ is just for convenience. Straightforward adaptations imply results about other types of Brownian measures, such as Brownian path measure in a bounded domain, Brownian excursions, Wiener measures and Brownian bridges. 
See Section \ref{sec:bounded} for a brief discussion on how to adapt the proof to the bounded domain case.
In \cite{HLLS}, a version is needed
for Brownian excursions in two dimensions; see Section \ref{excsec} for the relevant statement.

The motivation for proving this result is more than just curiosity about Brownian paths.  Random walk paths often appear in lattice statistical models and weights are often given in terms of the number of points in particular exceptional sets.  When taking continuum limits of such models it is important to know not just the geometric object but also the limit of these occupation measures.  This enables, for example, analysis of ``near critical'' behavior as a perturbation of the continuum critical object.
The particular question in this paper arose, indeed,
in the analysis of a different model, percolation.

Thanks to the connection between planar Brownian motion and $\SLE_6$ \cite{LSWTrilogy1,LSWTrilogy2,LSWTrilogy3}, the scaling limit of macroscopic  pivotal points of critical planar percolation locally looks like 2D Brownian cut points. 
In \cite{GPS-piv}, a natural pivotal measure was constructed  as the scaling limit of the counting measure on pivotal points of the critical site percolation on the triangular lattice. 
In our paper, Theorem~\ref{theorem.dec16} for $d=2$ provides another natural measure supported on macroscopic pivotal points. 
The equivalence of these two constructions is proved in \cite{HLLS}.
Since the Minkowski content definition is more intrinsic and explicit to work with, this article provides an important input to a program of the first and fourth authors on the conformal structure of uniform random planar maps based on dynamical percolation, which is  governed by a Poisson point process with the pivotal measure as its intensity.

We start this paper by discussing the relevant facts about Brownian path
measures.  This gives path decompositions that allow one to view
a Brownian motion going near a point as two independent Brownian motions,
the ``past'' and the ``future''.  The next section reviews facts about
convergence to the stationary distribution for pairs of Brownian motions conditioned
not to intersect.  We also  adapt 
these results to pairs of Brownian motions approaching a point.
The proofs of the main estimates are done next, followed by a general discussion
of how the results on Minkowski content follow from them.  The final
subsection discusses Brownian excursions in the upper half-plane
of $\R^2$. {The {a}ppendix contains various auxiliary results on measure theoretical issues, Brownian path decomposition, and some Poisson kernel estimates.}

Finally, we briefly mention our convention on notations. Throughout this work we use $c,C,c',c_*$, etc., for positive constants that may depend on the dimension $d$ but are otherwise universal. Additional dependence will be specified at their first appearance. We write $a\lesssim b$ (resp.\ $a\gtrsim b$)  if there exist a constant $C>0$ such that $a\leq Cb$ (resp.\ $a\geq Cb$). We write $a\asymp b$ if $a\lesssim b$ and $a\gtrsim b$ {and say that $a$ and $b$ are comparable.}

{\bf Acknowledgements:}
the authors would like to thank Dapeng Zhan for comments and suggestions on a previous version of the draft, especially regarding the cut-point Green's function. We also want to thank the referee for careful reading of the paper and for numerous helpful suggestions.

\section{Brownian path measures}  \label{brownsec}
It is more convenient to view Brownian motion
as a (not necessarily probability) measure on paths.
Here we define the measures $\mu_{x,y}^D$ for domains $D$ such that $\partial D$ is a finite union of disjoint {lines and circles (for $d=2$) or planes and spheres (for $d=3$)}. 
We will consider the domains both ``inside'' and ``outside'' of the spheres. Such domains $D$ are sufficient for our purpose, but we remark that the measure can also be defined for other domains $D$, provided the domain boundary is sufficiently smooth. 
   If $D = \R^d$, we write just $\mu_{x,y}$.
   These measures are supported on $\Gamma_{x,y}$ the set of curves $\gamma (t),
   0 \leq t \leq t_\gamma$, with $\gamma(0) = x, \gamma(t_\gamma) = y$.
 They will be finite measures except for
   the case $D = \R^2$. The points $x,y$ will be distinct and may be either interior or boundary points of $D$ (or one of each).  
     The definitions are slightly different for interior and boundary points. We write  $p_t(x) =
 (2\pi t)^{-d/2} \, e^{-|x|^2/2t}$  for the density
 of Brownian motion at time $t$.  We write $\sigma$ for surface measure on spheres (area if $d=3$
 and length if $d=2$). 

If $\mu$ is a positive measure, we write $\|\mu\|$ for its total mass and $\mu^\# = \mu/\|\mu\|$ for the probability measure obtained by normalization if $\|\mu\|<\infty$.
To specify a finite (strictly) positive measure
$\mu$ it suffices to give the ordered pair $(\mu^\#,
\|\mu\|).$  If $\gamma \in  \Gamma_{x,y}$ we write $\gamma^\rev \in \Gamma_{y,x}$
for the reversal of $\gamma$:
\[        \gamma^\rev(t) = \gamma(t_\gamma - t), \;\;\; 0 \leq t \leq t_\gamma.\]
If $\mu$ is a measure on $\Gamma$ we write $\mu^\rev$ for the corresponding
measure on reversed paths,
\[    \mu^\rev(C) = \mu\{\gamma: \gamma^\rev \in C \} .\]

\begin{remark}  We will be integrating a number of measure-valued functions.  These
can be defined in a straightforward way as follows. First we define a metric on the space of curves, e.g. we say that the distance between $\gamma_1:I_1\to\C$ and $\gamma_2:I_2\to\C$ for finite intervals $I_1,I_2\subset\R$ is given by $\inf_{\psi}\sup_{t\in I_1}(|\gamma_1(t)-\gamma_2(\psi(t))|+|\psi(t)-t|)$, where the infimum is over all increasing bijections $\psi:I_1\to I_2$. Then we use the Prokhorov metric to give a measure
 on probability measures ${\mathcal P}$ on curves, which
then generates a metric on finite measures by considering a finite
measure as an element of $ {\mathcal P} \times(0,\infty)$, with the second coordinate representing the total mass of the measure. We will not give the details here but only remark that the random walk counterpart of the path measure we are going to define below is straightforward.
\end{remark}

Let $\mu_{x,y,s}$ be the measure of total mass
$p_s(y-x)$ whose corresponding probability
measure $\mu_{x,y,s}^\#$ is the Brownian bridge
measure. We can write
\[  \mu_{x,y} =\int_0^\infty \mu_{x,y,s} \, ds  \]where the integral is the limit of a Riemann sum.
  If $d=3$, this is a finite measure of
total mass
\[      G(x,y) = \int_0^\infty p_t(x,y) \, dt= \frac{1}{2\pi\, |x-y|}
  ,\]
while it is an infinite measure for $d=2$.
If $D \subset \R^d$ and $x,y\in D$, then $\mu^D_{x,y}$ is
$\mu_{x,y}$ restricted to curves that stay
in $D$.  If $d=2$, and
Brownian motion exits the domain $D$ with probability 1,
then  $\mu^D_{x,y}$ is finite
  with  the total mass given by
the Green's function  $G_D(x,y)$   normalized so that when $D$ is the unit disk,
\begin{equation}
   G_D ({0,y}) = - \frac 1{\pi} \, \log |y|.
\end{equation}
 The measures satisfy reversibility,
$\mu_{y,x}^D = [\mu_{x,y}^D]^\rev$.

We now define the interior-to-boundary measure as the rescaled limit of the measure defined above.  For $x \in  D,
	y \in \partial D$, define $\mu^D_{x,y}$ as follows,\footnote{It should be noted that, in order to keep the notation light, we do not distinguish the notation here from the case of the interior-to-interior measure $\mu^D_{x,y}$ in the previous paragraph. However, there is no confusion as soon as the location of the starting and ending points are specified.}  where $\n_y$ denotes
the inward unit normal at $y$ into $D$
   \begin{equation}\label{eq:intobddef}
       \mu_{x,y}^D := \lim_{\epsilon \downarrow 0}
   \frac{1}{2\epsilon} \, \mu_{x,y+\epsilon \n_y}^D
   \;\;\;\;
   x \in D, \;  y \in \partial D .
\end{equation}  
   Note the factor of $2$.  For example, if $D$
   is the unit disk for $d=2$ and $|y| = 1$, then
\begin{equation}\label{eq:poissonestimates}
\|\mu_{0,y-\epsilon \n_y}^{D}\| = - \frac 1{\pi} \, \log (1-\epsilon)    \sim \frac \epsilon \pi, \quad\mbox{while}\quad\|\mu_{0,y}^D\| = 1/2\pi.
\end{equation}   
     The   boundary-to-interior point measure is obtained by reversing paths:
\begin{equation}
     \mu_{y,x}^D = \left[\mu_{x,y}^D\right]^\rev
      = \lim_{\epsilon \downarrow 0}
   \frac{1}{2\epsilon} \,  \mu_{y+\epsilon \n_y,x}^D, \;\;\;\;
   x \in D, \;  y \in \partial D 
 .
\end{equation}   
  A similar ideology also allows us to define the boundary-to-boundary
measure, 
  \begin{equation}\label{eq:bdrytobdry}
         \mu_{y,x}^D = 
 \lim_{\epsilon \downarrow 0} \frac{1}{2\epsilon}
  \mu_{y+ \n_y,x}^D =  \lim_{\epsilon \downarrow 0} \frac{1}{4\epsilon^2}
  \mu_{y+ \n_y,x+ \n_x}^D, \;\;\;\; x,y \in \partial D.
\end{equation}  
  If $\partial_1,\partial_2$ are disjoint subsets of $\partial D$, we write
  \begin{equation}
  \mu_{\partial_1,\partial_2}^D =
     \int_{\partial_1} \int_{\partial_2} \, \mu_{x,y}^D\, 
      \sigma (dx,dy).
\end{equation}     
The measure $2 \,  \mu_{\partial_1,\partial_2}^D$
is  often called
 excursion measure for excursions between $\partial_1$ and $\partial_2$ in $D$.
 
 We can also define the interior-to-boundary measure in the following alternative  simple fashion.
 Let  $\mu_{x,\partial D}$ denote the 
  measure of a Brownian motion started from $x$ stopped
  when it reaches $\partial D$ (restricted to the
  event that the path leaves $D$).   Note that $\|\mu_{x,\partial D}\|$ is the probability that
  a Brownian motion starting at $x$ ever exits $D$.  
 If $x \in D, y \in \partial  D$,
 we define $\mu^D_{x,y}$ by saying that if $V \subset \partial D$, then $\mu_{x,\partial D}$ restricted to paths
 that exit $D$ at $V$, is given by \(\int_V \mu_{x,y}^D \, \sigma(dy)\).  In particular, 
  \begin{equation}\label{eq:int-bdy}
  \mu_{x,\partial D}
  =  \int_{\partial D} \mu_{x,y}^D \, \sigma(dy).
  \end{equation}

The Brownian path measures are reversible, i.e., 
$  \mu_{y,x}[C] = \mu_{x,y}\left[\{\gamma: \gamma^R \in C\} \right]. $
They are also translation invariant and satisfy the following scaling property. If $r >0$, and $f_r(z) = rz$, then we define $f_r \circ \gamma$ to be the
image reparametrized appropriately, i.e.,
\[   t_{f_r \circ \gamma} = r^2 \, t_\gamma\quad \mbox{ and }\quad
   f_r\circ \gamma(t) =  f_r (\gamma(t/r^2)) = r \,  \gamma(t/r^2).\]
If we define $f_r \circ \mu$ by 
 $f_r \circ \mu [C] = \mu\left[\{\gamma: f_r \circ \gamma \in C\}\right]$,
then 
\begin{equation}\label{eq:scalingpty}
f_r \circ \mu_{x,y} =  r^{d-2} \,\mu_{rx,ry}.
\end{equation}   
{In two dimensions, Brownain path measures also inherit the conformal invariance of the (normal) planar Brownian motion. More precisely, let $D$, $D'$ be two domains satisfying the requirement at the beginning of this section and $\phi:D\to D'$ be {a} conformal map from $D$ to $D'$. Then if $x,y\in D$, 
\begin{equation}
\phi\circ\mu^{D}_{x,y}= \mu^{D'}_{\phi(x),\phi(y)}.
\end{equation}}

The advantage of using Brownian path measures instead of usual Wiener measure or Brownian bridge measure, is that it  allows for ``decomposition'' of the path from both directions. We will give the important decompositions.  They are versions of the strong Markov property or ``last-exit decomposition'', which is the strong Markov property on the reversed path. For ease, we again assume that all of our boundary components are {lines and circles (for $d=2$) or planes and spheres (for $d=3$)}. For completeness we have included proofs of the identities in Appendix \ref{appendix}, but we also refer to \cite[Section 5]{lawler-book} for some statements when $d=2$.

\begin{itemize}
 \item Let $D$ be a ball {or half-space} in $\R^d$ (allowing $D=\R^d$) and $S$ a {line or circle (for $d=2$) or plane or sphere (for $d=3$)} in $D$. Let $D_1,D_2$ denote the components
 of $D\setminus S$. 
Then if $x \in {D_1}, y \in D_i$, $i=1,2$, 
\begin{equation}\label{eq:decomp11}
\mu^{D}_{x,y} =  \mu_{x,y}^{D_1} +\int_S \left[\mu_{x,\zeta}^{D_1} \oplus \mu_{\zeta,y}^D\right] 
\, \sigma(d\zeta)
\end{equation}
\begin{equation}\label{eq:decomp12}
\mu^{D}_{x,y} =  \mu_{x,y}^{D_1} + \int_{S} \left[\mu_{x,\zeta}^D \oplus \mu_{\zeta,y}^{D_i}\right]
\, \sigma(d\zeta) ,\mbox{ and }
\end{equation}
\begin{equation}\label{eq:decomp13}
\mu^{D}_{x,y} =  \mu_{x,y}^{D_1} +\int_S \int_S \left[\mu_{x,\zeta_1}^{D_1} \oplus
\mu_{\zeta_1,\zeta_2}^D \oplus  \mu_{\zeta_2,y}^{D_i}\right] 
\, \sigma(d\zeta_1,d\zeta_2).
\end{equation}
 In each of these formulas, the integral term represents $\mu_{x,y}$
 restricted to curves that intersect $S$ (notice that if $y\notin D_1$, then $\mu_{x,y}^{D_1}=0$).  The three integral
 terms represent decompositions based on: the first visit to $S$;
 the last visit to $S$;
  and both the first and last visits to $S$,
 respectively.
 \item  Let $S_1,S_2$ be disjoint {lines or circles (for $d=2$) or planes or spheres (for $d=3$)} in $\R^d$ and let
 $D$ be the component of $\R^d \setminus (S_1 \cup S_2)$
  whose boundary includes both $S_1$ and $S_2$.  Let
 $D_j$ be the component of $\R^d \setminus   S_{3-j}$  that contains
 $S_j$.  Then if $x \in S_1, y \in S_2 , $
\begin{eqnarray}
  \mu_{x,y} & =&
   \int_{S_1} \int_{S_2}
        \left[\mu_{x,\zeta_1} ^{D_1}\oplus \mu^{D}_{\zeta^1,\zeta^2} \oplus
           \mu_{\zeta_2,y} \right] \,  \sigma(d\zeta_1,d\zeta_2) \label{eq:decomp21}\\
           & = &  \int_{S_1} \int_{S_2}
        \left[\mu_{x,\zeta_1} \oplus \mu^{D}_{\zeta^1,\zeta^2} \oplus
           \mu_{\zeta_2,y}^{D_2} \right] \,  \sigma(d\zeta_1,d\zeta_2). \label{eq:decomp22}
     \end{eqnarray} 
The first of these expressions is obtained
by  decomposing $\gamma \in \Gamma_{x,y}$
as  $\gamma = \gamma^1 \oplus \gamma^*
 \oplus \gamma^2$ where $\gamma^*$ is the first excursion from $S_1$ to $S_2$ in $D$.
 The second uses a similar decomposition where $\gamma^*$ is the
 last such excursion. 
  
 \item  Let $S_1,S_2,D$ be as above and assume $S_3,S_4$ are {lines or circles (for $d=2$) or planes or spheres (for $d=3$)}
 in $D$ such that every path from $S_1$ to $S_2$ in $D$ must go through
 $S_3$ and then $S_4$ in that order.  Let $D'$ denote the component of
 $D \setminus S_4$ that contains $S_1$ on its boundary.
 Let $D_0$ denote the component
 of $D \setminus (S_3 \cup S_4)$ that contains  both $S_3$ and
 $S_4$ on its boundary. Then if $\zeta_1 \in S_1, \zeta_2 \in S_2 , $
 \begin{equation}\label{eq:decomp3}
  \mu^D_{\zeta_1,\zeta_2} = \int_{S_3} \int_{S_4}
  \left[\mu_{\zeta_1,\zeta_3} ^{D'}\oplus \mu^{D_0}_{\zeta^3,\zeta^4} \oplus
  \mu_{\zeta_4,\zeta_2}^D \right] \,  \sigma(d\zeta_3,d\zeta_4).
 \end{equation}
 This is obtained by writing an excursion $\gamma$ in $D$ from $S_1$ to $S_2$
 as  $\gamma = \gamma^1 \oplus \gamma^*
 \oplus \gamma^2$ where $\gamma^*$ is the first excursion between $S^3$ and $S^4$
 in $D_0.$
  \item  Let $S_1,S_2,S_3,S_4,D$ be as above, let $D_3$ be the domain bounded 
  by $S_1$ and $S_3$ and let $D_4$ be the domain bounded by
  $S _4$ and $S_2$.  Then if $\zeta_1 \in S_1, \zeta_2 \in S_2 , $
\begin{equation}  \label{jan22.1}
 \mu^D_{\zeta_1,\zeta_2} = \int_{S_3} \int_{S_4}
   \left[\mu_{\zeta_1,\zeta_3} ^{D_3}\oplus \mu^{D }_{\zeta^3,\zeta^4} \oplus
           \mu_{\zeta_4,\zeta_2}^{D_4} \right] \,  \sigma(d\zeta_3,d\zeta_4).
           \end{equation}
  Here we write  $\gamma = \gamma^1 \oplus \gamma^*
 \oplus \gamma^2$ uniquely where $\gamma^1$ is an excursion from $S_1$
 to $S_3$ in $D_3$ and $\gamma^2$ is an excursion from $S^4$ to $S^2$
 in $D_4$. 
     \end{itemize}

We will need an invariance under inversion that is true for $d=2,3$.  Let 
$B_t$ be a Brownian motion with $ B_0 \neq 0$, and define    
\[          Y_t =  \frac{B_{\sigma(t)}}{|B_{\sigma(t)}|^2}, \;\;\;
     \mbox{  where  } \;\;\;    \int_0^{\sigma(t)} \, \frac{ds}{|B_{s}|^2}
      = t . \]
      
As in the introductory section we write
 $\ball_k(\cdot)$ for balls of the exponential scaling and omit the center when it is clear from context, e.g., from here until the end of this section, unless otherwise indicated all balls are centered at the origin. We let $T_k=T_k(B)$ denote the time at which $B$ hits $\partial\ball_k$. 
      
 \begin{proposition}   \label{invertprop}
 Suppose $d =2,3$ and $B_t$ is a standard Brownian motion
 with $0 < |B_0| < e^k.$  Then the distribution of
 $$              Y_s  , \;\;\;\; 0 \leq s \leq \sigma^{-1}(T_k(B))=T_{-k}(Y), $$ 
 is the same as that of a Brownian motion starting at $B_0/|B_0|^2$, stopped
at time $T_{-k}$, conditioned that $T_{-k} < \infty$. 
\end{proposition}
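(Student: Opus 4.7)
The plan is to compute the SDE satisfied by $Y_s$ via Itô's formula and a time change, and then match its infinitesimal generator against that of the appropriately conditioned Brownian motion. For $d=2$ this reduces to the classical conformal invariance of planar Brownian motion; for $d=3$ it requires Doob's $h$-transform with $h(y)=1/|y|$.

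First I would apply Itô's formula to the inversion $\phi(x)=x/|x|^2$, using the conformal identity $J_\phi J_\phi^T = |x|^{-4} I$ together with the elementary computation $\Delta\phi_i(x) = (4-2d)\,x_i/|x|^4$. Carrying out the prescribed time change $\sigma'(s) = |B_{\sigma(s)}|^2$ and applying Dambis--Dubins--Schwarz to rewrite the martingale part, one obtains
\[
 dY_s = |Y_s|\,dW_s + (2-d)\,Y_s\,ds,
\]
so $Y_s$ has infinitesimal generator $L_Y f(y) = \tfrac{1}{2}|y|^2 \Delta f(y) + (2-d)\, y\cdot \nabla f(y)$. On the target side, note that for $d=3$ the function $1/|x|$ is harmonic on $\R^3\setminus\{0\}$; optional stopping gives $\P_y(T_{-k}<\infty) = e^{-k}/|y|$ on $\{|y|>e^{-k}\}$, so conditioning on $\{T_{-k}<\infty\}$ is precisely the Doob $h$-transform with $h(y)=1/|y|$. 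The conditioned Brownian motion $X$ therefore has generator
\[
 L^h_X f(y) = \tfrac{1}{2}\Delta f(y) - \tfrac{y}{|y|^2}\cdot \nabla f(y),
\]
and comparison yields the factorization $L_Y = |y|^2\cdot L^h_X$. For $d=2$ there is no conditioning (since $\P_y(T_{-k}<\infty)=1$) and the identity reduces to $L_Y = |y|^2 \cdot \tfrac{1}{2}\Delta$, which is the classical conformal invariance of two-dimensional Brownian motion under $z\mapsto 1/\bar z$.

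Matching endpoints ($Y_0 = B_0/|B_0|^2$) and stopping rules (since $|Y_{\sigma^{-1}(T_k)}|=1/|B_{T_k}|=e^{-k}$, the stopping time of $Y$ corresponds to $T_{-k}$ on the $X$ side) then completes the identification. The main technical step is the $d=3$ case: verifying the explicit form of $\P_y(T_{-k}<\infty)$ and carrying out the $h$-transform generator computation. Conceptually, the scalar $|y|^2$ relating the two generators encodes exactly the time change between the natural Brownian parametrization of the conditioned process $X$ and the angular clock $s=\int_0^t |X_v|^{-2}\,dv$ implicit in the definition of $Y$; consistent with the path-measure framework of the paper, the statement should be interpreted as equality of laws on random curves modulo monotone time reparametrization.
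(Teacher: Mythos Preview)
Your approach is correct but genuinely different from the paper's. The paper proceeds via the skew-product decomposition: write $B_t = R_t\,\Theta_{A_t}$ with $R$ a $\mathrm{Bes}(d)$ process, $\Theta$ an independent spherical Brownian motion, and $A_t=\int_0^t R_s^{-2}\,ds$; then the proposition reduces to a classical fact about $1/R$ for $d=2,3$ (references to Pitman and Williams), and the spherical part comes along for free. Your route---direct It\^o calculus on the inversion map plus a Doob $h$-transform identification of the target generator---is more hands-on and self-contained, and has the advantage that the $d=2$ and $d=3$ cases are handled uniformly rather than by citing Bessel literature. The skew-product argument is shorter if one grants the Bessel facts, but your computation makes the mechanism transparent.

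You have also correctly flagged a subtlety. With the time change as written in the paper, $\int_0^{\sigma(t)}|B_s|^{-2}\,ds=t$, your computation gives $L_Y=|y|^2 L^h_X$, so $Y$ is the conditioned Brownian motion only up to a further time change (the ``honest'' conformal clock would be $\int_0^{\sigma(t)}|B_s|^{-4}\,ds=t$). Since the proposition is used in the paper only to transfer statements about path ranges and hitting (the path-measure framework explicitly works modulo monotone reparametrization), your reading is the right one and your remark at the end is appropriate. If you want the statement to hold literally as an equality of time-parametrized processes, replace the exponent $-2$ by $-4$ in the definition of $\sigma$.
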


Note that when $d=3$, the law of $Y$ is given by $\mu_{z,0}$ after normalization, where $z=B_0$. Also note that if $d=2$ then $\Prob[T_{-k} < \infty] = 1$ so we do not need
to make the conditioned statement.  It is necessary for $d=3$.

\begin{proof}  This uses the representation of a $d$-dimensional Brownian motion
as the product of a ${\rm Bes}(d)$ process for the radial part  and an independent
spherical part.  The corresponding fact about Bessel processes for $d=2,3$ is well
known and follows from an standard It\^o's formula  calculation
 (see for instance  \cite[Prop 1.1 \small{(ii)}]
 {Pit1975} or  \cite[Theorem 3.6]{Wil1974}). 
 \end{proof}

If $d=2$, the path measure $\mu_{x,y}$ is infinite.  However, if we restrict
to a particular set of paths, we often get a finite measure.  The following
two lemmas are examples of this that will be important to us.

\begin{lemma} \label{dec8.lemma1}
 If $d=2$ and $x,y$ are distinct points,  let $C = C_{x,y}
$ denote the set of paths $\gamma(t), 0 \leq t \leq t_\gamma$, such that $\gamma$  does not 
make two closed loops contained in the annulus
$A$ separating its two boundary circles, where $A:= A_{x,y} =  \{z:  |x-y| \leq  |z-(x+y)/2| \leq 2 \, |x-y|\}$.
  To be more precise, $C $ is the
set of curves from $x$ to $y$ such that  there do not exist 
$0 < t_1 < t_2 < t_3 < t_4 < t_\gamma$ such that $\gamma[t_1,t_2] \subset A,
\gamma[t_3,t_4] \subset A$ and such that $(x+y)/2$ 
is not in the unbounded component
of either $\R^2 \setminus \gamma[t_1,t_2]$ or $\R^2 \setminus
\gamma[t_3,t_4]$.  Then $ c':=\mu_{x,y}[C] < \infty$ and does not depend on $x,y$.

\end{lemma}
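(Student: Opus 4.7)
By translation, rotation, and two-dimensional scaling invariance of $\mu_{x,y}$ -- the scaling identity $f_r\circ\mu_{x,y}=\mu_{rx,ry}$ for $d=2$ following from \eqref{eq:scalingpty}, together with the corresponding covariance of $A_{x,y}$ and $C_{x,y}$ under these transformations -- one obtains $\mu_{x,y}[C_{x,y}]=\mu_{x',y'}[C_{x',y'}]$ whenever $(x',y')$ is an isometric (including scaled) image of $(x,y)$. Thus independence of $c'$ from $x,y$ is automatic once finiteness is proved, and it suffices to treat the canonical configuration $w=0$, $x=-\e/2$, $y=\e/2$, $A=\{1\le|z|\le 2\}$. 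Write $D_i=\{|z|\le i\}$, so $A=D_2\setminus\mathrm{int}(D_1)$.

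My plan for finiteness is to decompose $\gamma\in\Gamma_{x,y}$ by the number $K(\gamma)\in\{0,1,2,\dots\}$ of maximal subintervals of $[0,t_\gamma]$ on which $\gamma$ lies in $\mathrm{int}(A)$; call these the transits of $\gamma$. Iteratively applying the first- and last-visit path decompositions from the previous section at the spheres $\partial D_1$ and $\partial D_2$ expresses $\mu_{x,y}|_{\{K(\gamma)=K\}}$ as an iterated integral in which transits (Brownian subpaths in $\overline{A}$ with endpoints on $\partial D_1\cup\partial D_2$) alternate with pieces living in $D_1$ or in $\R^2\setminus D_2$. In $d=2$ each such inside or outside piece has finite total Brownian mass (the exterior of a disk has finite Green's function in the plane), and integrating out the intermediate endpoints produces per-cycle transition kernels that are harmonic measures of mass at most $1$. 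A standard bookkeeping then yields the uniform bound
\[
\mu_{x,y}\big[\{K(\gamma)=K\}\big]\;\le\;C,\qquad K\ge 0,
\]
for some constant $C$ depending only on the canonical configuration.

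The heart of the proof is the claim that each transit independently has probability at least $p>0$ of producing a closed loop around $w$ contained in $A$, uniformly in its entrance and exit points. I would prove this by mapping $A$ conformally onto the logarithmic cylinder $[0,\log 2]\times(\R/2\pi\Z)$ via $z\mapsto\log z$, time-changing a transit into a planar Brownian excursion across the resulting strip, and noting that the angular component of such an excursion has uniform positive probability of varying by at least $2\pi$; by the dense self-intersection of planar Brownian motion, a winding of this magnitude forces the trace of the transit in $A$ to contain a Jordan loop enclosing $w$. Granted this lower bound, the strong Markov property applied at the start of each transit shows that the loop-creation indicators across distinct transits stochastically dominate an independent $\mathrm{Bernoulli}(p)$ sequence, so the conditional probability that at most one transit contains a loop (among $K$ transits) is at most $K(1-p)^{K-1}$. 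Since the condition $C_{x,y}$ on $\gamma$ is stronger than "at most one transit contains a loop", combining with the preceding mass bound yields
\[
\mu_{x,y}[C_{x,y}]\;\le\;G_{D_2}(x,y)\;+\;C\sum_{K\ge 1}K(1-p)^{K-1}\;<\;\infty,
\]
where the first term bounds the contribution of paths with $K(\gamma)=0$, which stay in $\overline{D_1}$ and hence trivially lie in $C_{x,y}$. The main technical obstacle is the uniform loop-probability bound $p>0$: translating the angular-displacement event into the precise topological statement that $w$ is separated from $\infty$ by the subpath's trace requires a small geometric lemma about the topology of the range of planar Brownian motion with nontrivial winding around a point.
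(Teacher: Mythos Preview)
Your invariance reduction is fine, and the overall plan---count crossings of the annulus, show each crossing produces a loop with uniformly positive probability, sum a geometric series---is exactly the paper's strategy. The gap is in how you count.

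Your $K(\gamma)$, the number of maximal subintervals of $[0,t_\gamma]$ on which $\gamma\in\mathrm{int}(A)$, is almost surely infinite: near any time $t$ with $\gamma(t)\in\partial D_1$, planar Brownian motion oscillates between $\mathrm{int}(D_1)$ and $\mathrm{int}(A)$ infinitely often, so the sets $\{K(\gamma)=K\}$, $K\in\{0,1,2,\dots\}$, do not partition path space. More damagingly, the uniform lower bound $p>0$ on the loop probability fails for transits that merely graze $\partial D_1$ and return: such a transit can be arbitrarily short and has arbitrarily small probability of winding once around $w$, so no uniform $p$ exists under your definition. Your cylinder argument tacitly assumes the transit actually \emph{crosses} the strip, which is not guaranteed.

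The paper repairs both problems by counting only full up-crossings of the annulus. With $x=(-1,0)$, $y=(1,0)$ (so $A=\{2\le|z|\le 4\}$), set $\tau_1=\inf\{t:|\gamma(t)|=4\}$ and inductively $\sigma_k=\inf\{t\ge\tau_{k-1}:|\gamma(t)|=2\}$, $\tau_k=\inf\{t\ge\sigma_k:|\gamma(t)|=4\}$; let $U_k=\{\tau_k<\infty=\tau_{k+1}\}$. Now there are only finitely many $\tau_k$, and each excursion $\gamma[\sigma_k,\tau_k]$ for $k\ge 2$ is a Brownian motion started on $\{|z|=2\}$ and run until it hits $\{|z|=4\}$; for \emph{that} process your winding argument does give a uniform probability $q>0$ of producing two loops in $A$ (asking for two loops in a single crossing, rather than one loop in each of two transits, also simplifies the combinatorics). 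The mass bound $\mu_{x,y}[U_k]\le\max_{|z|=2}G_{4\ball}(z,y)<\infty$ is uniform in $k$ because the terminal piece must stay in $\{|z|<4\}$, and one obtains $\mu_{x,y}[U_k\cap C]\le c(1-q)^{k-1}$, which sums.
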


The geometric significance of the event $C $ is as follows. 
Suppose $\gamma^1,\gamma^2$ are curves starting outside $\ball_2$ 
and stopped at $\partial\ball_0$.  Let $x,y \in \partial\ball_0$
be the terminal points. 
   If   $\wt \gamma \in \Gamma_{x,y} \setminus C $,
there exist  $s < t$ such that
$\wt \gamma(s) \in \gamma^2, \wt\gamma(t) \in \gamma^1$.
In particular,   the 
concatenation  $\gamma = \gamma^1 \oplus \wt \gamma \oplus [\gamma^2]^R$ 
has no cut points in $\ball_0$. 
\begin{proof} 
	The fact that $\mu_{x,y}[C] $ is independent of $x,y$ follows
from translation invariance and scaling, so we may assume that $x=(-1,0), y = (1,0)$.
For each path we consider the excursions between $\partial\ball_2$ and $ \partial 
\ball_1$, that is,   let $\tau_0 = \sigma_1 = 0$ and 
\[   \tau_1 = \min\{t: |\gamma(t)| = e^2\}, \]
and for $k \geq 2$,
\[     \sigma_{k} = \min\{t \geq \tau_{k-1}: |\gamma(t)| = e\},
\;\;\;\; \tau_k = \min\{t \geq \sigma_k: |\gamma(t)| =  e^2\}. \]
Let $U_n$ be the set of curves from $x$ to $y$ such that $\tau_n < \infty, 
\tau_{n+1} = \infty$.  Let $q > 0$ be the probability that a Brownian motion
starting from a point on $ \partial \ball_1$ makes two loops in $A$ before reaching
$\partial \ball_2$.  We now claim the following, which immediately implies the lemma 
\eqb
\begin{split}
	\mu_{x,y} [U_0 \cap C] &\leq \mu_{x,y}[U_0]
	= G_{\ball_2}(x,y),\\
	\mu_{x,y} [U_n \cap C] &\leq (1-q)^{n-1} \max_{|z| = e}
	G_{\ball_2}(z,y) \leq c \, (1-q)^{n-1},\,\,n>0.
\end{split}
\label{eq:paths-exponential-decay}
\eqe
 To prove the claim, we split the path into its excursions between $\partial\ball_1$ and $\partial\ball_2$. More precisely, in, say, the case of $U_1$ we consider the following segments: (i) the path until hitting $\partial\ball_1$, (ii) then the path until hitting $\partial\ball_2$, (iii) then the path until hitting $\partial\ball_1$, and (iv) finally, the path until hitting $y$. Recall from the definition of $\mu_{x,\partial D}$ earlier in this section that measures in (i)-(iii) are all probability measures if we do not impose the restriction that the path in (ii) does not make two loops around $A$. Imposing this restriction reduces the measure by a factor of $1-q$. We obtain \eqref{eq:paths-exponential-decay} by using that the measure of path segments of the form (vi) is bounded by $\max_{|z| = e}
  	G_{\ball_2}(z,y)$.
\end{proof}
\begin{lemma} \label{feb1.1}
	Suppose $d=2$ and $V$ is a compact subset of $\R^2$ disjoint from $\{0,\e\}$.
	Then 
	\[   \mu\left[\{\gamma: \A_{\gamma} \cap V \neq \eset \}\right] < \infty . \]
\end{lemma}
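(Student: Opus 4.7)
The plan is to reduce the statement to a single small ball and then combine a first/last-visit decomposition of $\mu_{0,\e}$ with Lemma~\ref{dec8.lemma1}. Because $V$ is compact and $\dist(V,\{0,\e\})>0$, I can cover $V$ by finitely many open balls $\ball_1,\dots,\ball_m$ satisfying $4\ball_i \cap \{0,\e\}=\eset$; by subadditivity it then suffices to bound $\mu[\{\A_\gamma \cap \ball \neq \eset\}]$ for a single such ball $\ball$.

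Fix such a $\ball$ and write $D_1 = \R^2 \setminus \ball$, the unbounded component of $\R^2\setminus \partial \ball$, which contains both $0$ and $\e$. The third path decomposition recorded in Section~\ref{brownsec}, applied at $S=\partial \ball$, gives
\[
\mu_{0,\e} \;=\; \mu^{D_1}_{0,\e} \;+\; \int_{\partial \ball}\int_{\partial \ball} \left[\mu^{D_1}_{0,x}\oplus \mu_{x,y}\oplus \mu^{D_1}_{y,\e}\right] \sigma(dx)\,\sigma(dy).
\]
The first term carries no cut point in $\ball$ since its paths avoid $\ball$ entirely. In the second term, writing the concatenation as $\gamma = \gamma^1 \oplus \wt\gamma \oplus \gamma^2$, the outer pieces $\gamma^1,\gamma^2$ start at $0$ and $\e$, both lying outside $4\ball$, and are stopped at $\partial \ball$. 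This is exactly the setting covered by the geometric consequence of Lemma~\ref{dec8.lemma1}: if the concatenation $\gamma$ has a cut point in $\ball$, then necessarily $\wt\gamma \in C_{x,y}$.

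Using the finite constant $c' = \mu_{x,y}[C_{x,y}]$ from Lemma~\ref{dec8.lemma1} (independent of $x,y$), together with the fact that in two dimensions planar Brownian motion hits any closed ball with probability one, so that $\|\mu^{D_1}_{0,\partial \ball}\| = \|\mu^{D_1}_{\partial \ball,\e}\|=1$, I would conclude
\[
\mu[\{\A_\gamma \cap \ball \neq \eset\}] \;\leq\; \int_{\partial \ball}\int_{\partial \ball} \|\mu^{D_1}_{0,x}\| \, c' \, \|\mu^{D_1}_{y,\e}\| \,\sigma(dx)\,\sigma(dy) \;=\; c' \cdot \|\mu^{D_1}_{0,\partial \ball}\| \cdot \|\mu^{D_1}_{\partial \ball,\e}\| \;\leq\; c',
\]
and summing over the finite cover yields the lemma. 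The only place that requires care is arranging the geometric hypothesis of Lemma~\ref{dec8.lemma1}, namely that the outer pieces start outside $4\ball$; this is precisely why I choose the covering balls to satisfy $4\ball_i \cap \{0,\e\}=\eset$. Every other ingredient is either a standard Markov-type decomposition from Section~\ref{brownsec} or a previously-established input (the finiteness of $c'$ and the two-dimensional hitting-probability identity).
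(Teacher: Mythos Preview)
Your proof is correct. It differs from the paper's approach in a clean and arguably more modular way, so let me compare.

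The paper does not cover $V$ by small balls. Instead it fixes two balls $\ball^0,\ball^\e$ about the endpoints $0,\e$ (chosen small enough to miss $V$) and considers the event $C$ that the path does not first make a disconnecting loop in $\ball^\e$ and later a disconnecting loop in $\ball^0$. It then argues topologically that on the complement of $C$ every time between the first visit to the later loop and the last visit to the earlier loop is a non-cut time, forcing $\A_\gamma\cap V=\eset$; finiteness of $\mu[C]$ is obtained by the same excursion-counting trick as in Lemma~\ref{dec8.lemma1}.

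Your route instead localizes near $V$: you cover $V$ by finitely many balls with $4\ball_i\cap\{0,\e\}=\eset$, apply the first/last-visit decomposition at $\partial\ball$, and invoke the already-stated geometric consequence of Lemma~\ref{dec8.lemma1} to force $\wt\gamma\in C_{x,y}$ whenever there is a cut point in $\ball$. This is neater in that it reuses Lemma~\ref{dec8.lemma1} directly rather than rerunning its argument with new balls, and the bound $\mu[\{\A_\gamma\cap\ball\neq\eset\}]\leq c'$ you obtain is both explicit and uniform over the covering balls. The paper's argument, on the other hand, handles all of $V$ at once with a single event $C$ and does not need a covering, but pays for this by reproving the loop-counting estimate in a slightly different setting.
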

\begin{proof}  Let $\ball^0, \ball^\e$ denote closed balls about $0$ and $\e$
	of radius less than $1/2$ that do not intersect $V$.  Consider $C'$,
	the set of curves $\gamma$ such that there do not
	exist  $0 < t_1 < t_2 < t_3 < t_4 $ such that
	\begin{itemize}
		\item  $\gamma[t_1,t_2] \subset \ball^\e$ and disconnects $\e$ from infinity, and
		\item  $\gamma[t_3,t_4] \subset \ball^0$ and disconnects $0$ from infinity.
	\end{itemize}
	We claim that if $\gamma \in  \Gamma_{0,\e} \setminus
	C'$, then $\A_\gamma \cap V= \eset$. To see this, 
	let $t_-$ be the first time that $\gamma$ hits $\gamma[t_3,t_4]$ and
	$t_+$ the last time that $\gamma$ hits $\gamma[t_1,t_2]$.  Then
	$\gamma(t_-) = \gamma(t')$ for some $t_3 \leq t' \leq t_4$
	and $\gamma(t_+) = \gamma(t'')$ for some $t_1 \leq t'' \leq t_2$. 
	In particular, there are no  cut times in $(t_-,t_3)$ and there are no
	cut times in $(t_2,t_+)$, which implies that there are no cut times
	in $(t_-,t_+)$.  But $(\gamma[0,t_-] \cup \gamma[t_+, t_\gamma])
	\cap V = \eset.$ Considering excursions between $\partial\ball_1$ and $\partial\ball_2$ again and the partitioning of $\Gamma_{0,\e}$ into $U_k$ as in the proof of  Lemma \ref{dec8.lemma1} and arguing similarly,
	we see that for the event $C'$ we are considering here it still follows that  $\mu[U_k\cap C']\leq c'(1-q')^{k-1}$ for some $c'>0$, $0<q'<1$. Hence, we see that
	$\mu[C'] < \infty$.
\end{proof}
The following lemma will be used in the proof of Proposition \ref{prop101} and Theorem \ref{groundhog}.
\begin{lemma}   \label{dec8.lemma2}
	There exists $c < \infty$ such that the following holds.
	Suppose
	$\wt \gamma^1,\wt \gamma^2$ are two curves connecting the
	boundary components of the annulus $\{1 < |z| < e^k\}$, ending at $x,y\in\ball_0$, respectively. Consider the set $C''{\subset}\Gamma_{x,y}$ of curves $\omega$ such that the following two facts hold: 
	\begin{itemize}
		\item    $\omega  \cap   \partial \ball_k \neq \eset $.
		\item  There do not exist $0 < t^- < t^+ < t_\omega$ such that
		$\omega(t^-) \in \wt \gamma^2, \omega(t^+) \in \wt \gamma^1$. 
	\end{itemize}
	Then $\mu_{x,y}[C''] \leq  c e^{-k(d-1)/{3}}$ where $c$ does not depend on $x,y$.
\end{lemma}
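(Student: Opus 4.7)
The proof splits by dimension: condition (ii) plays no role for $d=3$ but is essential for $d=2$, where $\mu_{x,y}[\gamma \cap \partial \ball_k \neq \eset]$ is already infinite.

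For $d=3$, I would bound the larger event $\{\gamma \cap \partial \ball_k \neq \eset\}$ via the first-hit decomposition at $\partial \ball_k$:
\[
\mu_{x,y}[\gamma \cap \partial \ball_k \neq \eset] \leq \int_{\partial \ball_k} \|\mu_{x,\zeta}^{\ball_k}\| \, G(\zeta, y) \, \sigma(d\zeta).
\]
For $|x|=1$, the Poisson kernel $\|\mu_{x,\zeta}^{\ball_k}\|$ is bounded by $c\,e^{-2k}$ (harmonic measure on $\partial\ball_k$ from a point near the origin is approximately uniform over the sphere of area $4\pi e^{2k}$), and $G(\zeta,y) = (2\pi|\zeta-y|)^{-1} \leq c\,e^{-k}$; multiplying by $\sigma(\partial \ball_k) = 4\pi e^{2k}$ gives $O(e^{-k}) = O(e^{-k(d-1)/2})$.

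For $d=2$, define $T = \inf\{t \in (0,t_\gamma): \gamma(t) \in \wt\gamma^2\}$, with the convention $T=t_\gamma$ if there is no interior hit. On $C'$, condition (ii) forces $\gamma|_{[0,T]}$ to avoid $\wt\gamma^2$ on $(0,T)$ and $\gamma|_{[T,t_\gamma]}$ to avoid $\wt\gamma^1$ on $(T,t_\gamma)$. Since $\gamma$ reaches $\partial \ball_k$, at least one of the two halves does, so $\mu_{x,y}[C'] \leq \mu_{x,y}[C'_A] + \mu_{x,y}[C'_B]$, where $C'_A$ (resp.\ $C'_B$) is the subevent where the first (resp.\ second) half reaches $\partial \ball_k$. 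The first-hit decomposition at $\wt\gamma^2$ gives
\[
\mu_{x,y}[C'_A] \leq \int_{\wt\gamma^2} \mu_{x,z}^{\R^2 \setminus \wt\gamma^2}[\gamma \cap \partial \ball_k \neq \eset] \cdot G_{\R^2 \setminus \wt\gamma^1}(z,y) \, \sigma(dz),
\]
where the second factor is finite because $\R^2 \setminus \wt\gamma^1$ is simply connected, hence conformally equivalent to $\D$.

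The key input is the Beurling projection estimate: for $|x|=1$, a planar Brownian motion reaches $\partial \ball_k$ before hitting any connected set $K$ spanning the annulus $\{1 \leq |z| \leq e^k\}$ with probability at most $c\,e^{-k/2}$, obtained by radially projecting $K$ onto $[1,e^k]$ and conformally mapping the slit annulus to a rectangle of side lengths $k$ and $2\pi$, where exit at the far short side decays like $e^{-k/2}$ by separation of variables. Applied with $K=\wt\gamma^2$, this bounds $\int_{\wt\gamma^2} \mu_{x,z}^{\R^2 \setminus \wt\gamma^2}[\gamma \cap \partial \ball_k \neq \eset] \,\sigma(dz) \leq c\,e^{-k/2}$. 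The main technical obstacle is then the $G_{\R^2 \setminus \wt\gamma^1}(z,y)$ weight: it blows up logarithmically as $z \to y$ along $\wt\gamma^2$, so one cannot simply pull it out of the integral. I would resolve this by pulling back via a conformal map $\phi:\R^2 \setminus \wt\gamma^1 \to \D$, rewriting $G_{\R^2 \setminus \wt\gamma^1}(z,y) = G_\D(\phi(z),\phi(y))$, and folding the weight into a modified Beurling estimate for BM conditioned to exit the slit annulus in a prescribed region, so that the combined integral is controlled uniformly in the geometries of $\wt\gamma^1, \wt\gamma^2$; the symmetric case $C'_B$ is handled analogously with $\wt\gamma^1,\wt\gamma^2$ interchanged.
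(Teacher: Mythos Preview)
Your $d=3$ argument is correct and matches the paper's: simply bound the larger event $\{\gamma\cap\partial\ball_k\neq\eset\}$ and compute $O(e^{-k})$.

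For $d=2$ your route diverges from the paper's and leaves a real gap. The paper does \emph{not} decompose at the first hit of $\wt\gamma^2$; instead it first invokes Lemma~\ref{dec8.lemma1}: the remark following that lemma says precisely that any $\gamma\notin C$ (i.e.\ making two closed loops in the small annulus around $(x+y)/2$) must hit $\wt\gamma^2$ and then $\wt\gamma^1$, so $C'\subset C$ and hence $\mu_{x,y}[C']\le c'<\infty$ \emph{before} any Beurling input. Once the measure is finite, the paper splits at the first visit to $\partial\ball_k$ and observes that either the outgoing crossing of $\{2<|z|<e^k\}$ avoids $\wt\gamma^2$ or the returning crossing avoids $\wt\gamma^1$; a single Beurling estimate on that crossing, combined with the $O(1)$ bound on the remaining piece coming from Lemma~\ref{dec8.lemma1}, yields $O(e^{-k/2})$.

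Your approach, by contrast, never controls the ``other half'' of the path. The integral
\[
\int_{\wt\gamma^2}\mu_{x,z}^{\R^2\setminus\wt\gamma^2}[\gamma\cap\partial\ball_k\neq\eset]\cdot G_{\R^2\setminus\wt\gamma^1}(z,y)\,\sigma(dz)
\]
has two problems. First, it presupposes a boundary path-measure decomposition along $\wt\gamma^2$, but $\wt\gamma^2$ is an arbitrary continuous curve, not a sphere; the paper's decomposition identities are only stated (and only cleanly hold) for spherical boundaries. Second, and more seriously, the weight $G_{\R^2\setminus\wt\gamma^1}(z,y)$ is \emph{not} uniformly bounded: it blows up as $z\to y$ along $\wt\gamma^2$, and there is no uniform control over where on $\wt\gamma^2$ the first hit lands after the excursion to $\partial\ball_k$. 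Your proposed fix---pull back by a conformal map and ``fold the weight into a modified Beurling estimate''---is not a proof; making this rigorous uniformly over all admissible $\wt\gamma^1,\wt\gamma^2$ is exactly the difficulty that Lemma~\ref{dec8.lemma1} is designed to bypass. The missing idea is to use the two-loop event $C$ to get finiteness first, so that Beurling only has to supply the decay factor, not finiteness and decay simultaneously.
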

\begin{proof}  
When $d=2$, the proof is similar to that of the proof of  
	Lemma \ref{dec8.lemma1}. We consider $U'_n$, the set of curves from $x$ to $y$ with exactly $n$ excursions between $\partial\ball_1$ and $\partial \ball_k$. Since $U'_0\cap C''=\emptyset$ by definition, we only need to consider $\mu[U'_n \cap C'']$ for $n\geq 1$. When $n=1$, we can decompose $\omega$ as $\omega_1\oplus\omega_2\oplus\omega_3\oplus\omega_4$, where $\omega_1$ is the part of $\omega$ between $x$ and the first visit to $ \partial \ball_1$, $\omega_2$ the part between first visit to $ \partial \ball_1$ and first visit to $\partial  \ball_k$, $\omega_3$ the part between the first visit to $\partial  \ball_k$ and the subsequent visit to $\partial \ball_1$, and $\omega_4$ the rest of $\omega$. We can regard $\omega_1$, $\omega_2$, and $\omega_3$ as independent Brownian motions, the latter two conditioning on their respective starting points. By Beurling estimates, we see that both the probability for $\omega_2$ not to hit $\wt \gamma^2$ and for $\omega_3$ not to hit $\wt \gamma^1$ are $O(e^{-k/2})$. Hence, 
$$
\mu[U'_1\cap C'']\leq ce^{-{k/2}} \max_{|z|=e} G_{\ball_k}(z,y)\leq c' e^{-{k/2}} k,	
$$
Arguing similarly as in the proof of Lemma \ref{dec8.lemma1}, we also have for all $n\geq 1$,
$$
\mu[U'_n\cap C'']\leq c e^{-n{k/2}}  \max_{|z|=e} G_{\ball_k}(z,y)\leq c' e^{-n{k/2}}k.
$$
The claim then follows by summing up the inequalities above for $n\geq1$.

When $d=3$ we can use a much easier estimate:  if $x,y \in \partial \ball_0$, then the $\mu_{x,y}$ measure of
curves that touch $ \, \partial \ball_k$ is $O(e^{-k})$, regardless of the exact position of $x$ and $y$.
\end{proof}

\section{Invariant measure for non-intersecting Brownian paths}\label{sec:3}
In \cite{Law1995} and \cite{Law1998},  a quasi-invariant probability
measure was introduced for pairs of Brownian motions in $2$ or $3$ dimensions
conditioned not to intersect.  To be more precise,
let $\state^*$ denote the collection of
ordered pairs $(\gamma^1,\gamma^2)$,
where $\gamma^j:[0,t_{ j}] \rightarrow \R^d$ with
$\gamma^j(0) = 0, |\gamma^j(t_{j})| = 1, |\gamma(t)| < 1$ for $t <
t_{ j}$, and $\gamma^1(0,t_1] \cap \gamma^2(0,t_2]
 = \emptyset$. 

The quasi-invariance can be stated as following.  Suppose $(\gamma^1,\gamma^2)$
have distribution $\invprob^*$ and independent Brownian motions are started at
$\gamma^1(t_1), \gamma^2(t_2),$ and stopped when they reach $\partial \ball_a$ for some $a>1$. Let
$(\wt \gamma^1,\wt \gamma^2)$ be the paths obtained by concatenating
the original paths with  these
Brownian motions.
   Then  the probability
that $\wt \gamma^1(0,\wt t_1] \cap \wt \gamma^2(0,\wt t_2]
 = \eset$ is $e^{-a\xi}$, and conditioned on this event, 
 if we  use Brownian scaling 
so that the concatenated paths  are in $\state^*$,
the conditional distribution on
 the curves is $\invprob^*$. 
 
Similarly, let $\state^*_k$ denote the collection of such ordered pairs where
the paths start at $\partial \ball_{-k}$.  If $\bar \gamma \in \state^*$,
then we obtain $\bar \gamma_k \in \state_k^*$ by considering the
paths starting at their first visit to $\partial \ball_{-k}$. We write  $\invprob^*_k$ for the probability measure on $\state^*_k$ induced by $\invprob^*$.

Let us review the important facts for our paper.
Suppose $V_0,V_1,V_2 $ are pairwise disjoint subsets of $\ball_{-n}$ such that $V_0$, $V_0\cup V_1$, and $V_0\cup V_2$ are closed sets
and $x_j \in V_j$, $i=1,2$.  Let $W^j$ be independent Brownian motions starting at
$x_j$ and let $T_{-k}^j$ be the first time that the paths reach $\partial \ball_{-k}$
where $k \leq n-1$. We refer to such quintuples $(V_0,V_1,V_2,x_1,x_2)$ colloquially as initial configurations. Let  $\Lambda^j_k = V_j \cup W^j[0,T_{-k}^j]$ and 
let $A_k$ be the event  
$$A_k = \left\{(V_0\cup\Lambda^1_k)\cap \Lambda^2_k =(\Lambda^1_k)\cap (V_0\cup\Lambda^2_k) = \eset \right\}.$$
To make sure the following results make sense, we only consider initial configurations such that $A_0$ has positive probability.

Let $\wt \invprob_k = \wt \invprob_k(V_0,V_1,V_2,x_1,x_2)$ be the
probability measure on ${\cal X}_k^*$ induced by the measure of the paths  $W^1[T_{-k}^1,T_0^1] , W^2[T_{-k}^2,T_0^2]$ conditioning on the event $A_0$.  Let
\begin{equation}\label{eq:Deltadef}
\Delta_k =   e^{k} \, \left( \dist[W^1(T_{-k}^1), \Lambda^{2}_k ] \wedge
\dist[W^2(T_{-k}^2), \Lambda^{1}_k ]\right).
\end{equation}

We recall some facts about non-intersecting Brownian motions.  Note that the constants $c_1,c_2,c,u$ below are all universal, depending on dimension only. 
   
 The key steps in \cite{Law1996} to establish the
 Hausdorff dimension of cut points were the following  estimates.  The first is almost ``obvious'': if two paths avoid each other then there is a good chance they are far apart. Note that here our setting is more general than that of the references given below (in \cite{Law1996} no initial configuration was considered, in \cite{Law1995,LawVer} the authors only consider the case $V_0=\eset$ and $x,y$ on $\partial \ball_{-k}$), but the proofs carry through in exactly the same manner. In Sections \ref{sec:bounded} and \ref{excsec} there will indeed be scenarios where taking non-empty $V_0$'s is necessary.

\begin{itemize}
 \item {\bf Separation Lemma}
  (\cite[Lemma 3.4]{Law1995}, \cite[Lemma 3.2]{LawVer}):
  There exists $c > 0$ such that
 \begin{equation}
  \Prob\big[\Delta_{k} \geq 1/10 \big| A_k\big] \geq c.\label{eq:sep1}
\end{equation}    
   \end{itemize}

 \begin{itemize}  
 \item {\bf Separation-at-beginning Lemma}: 
  There exists $c > 0$ such
 that if $x_1,x_2 \in \partial \ball_{-n}$ and 
  $\Delta_n \geq 1/10$, then 
 \begin{equation}\label{eq:sep2}
  P\big[I_k\big| A_k\big]\geq c,
\end{equation}   
where $$I_k:=\Big\{W^j[0,T_k^j] \cap \ball_{-n}  \subset  \{z: |x_j - z| \leq e^{-n}/100\} , \;\; j=1,2\Big\}.$$
{For $d=3$, see \cite[Proposition 3.10]{LawVer}. For $d=2$, one can run a similar argument, where one replaces \cite[Corollary 3.9]{LawVer} by \cite[Lemma 3.8]{Law1995} as the main ingredient.}
\item{\bf Separation-at-both-ends Lemma} (\cite[Lemma 3.7]{Law1996}): Occasionally we need to use the two separation lemmas at the same time. Assuming the same as the separation-at-beginning lemma, there exists $c>0$ such that
 \begin{equation}\label{eq:sep3}
  P\big[\{\Delta_k\geq 1/10\}\cap I_k\big| A_k\big]\geq c.
\end{equation}

\end{itemize}
By the Markov property of Brownian motion, in the case that $V_1$, $V_2$ are single points on $\partial \ball_{{-n}}$ sufficiently separated one can get submultiplicativity, which implies the existence of the intersection exponent $\xi$ and the estimate  (see \cite{LSWTrilogy1})
\begin{equation}\label{eq:rough}
P(A_0) = e^{-(\xi+o(1)) n}.
\end{equation}
For general $(V_0,V_1, V_2, x_1, x_2)$, the estimate \eqref{eq:rough}
together with Markov property implies that there is a uniform constant $c_2 \in (0,\infty)$ such that $P[A_0] \leq  c_2 P[A_{n-1}]e^{-\xi n}$; and the estimate \eqref{eq:rough} together with the two separation lemmas {(see e.g.\ \cite[Proposition 3.11]{Law1995} to see how this is achieved)} implies that there is a uniform constant $c_1 \in (0,\infty)$ such that $P[A_0] \geq  c_1 P[A_{n-1}]e^{-\xi n}$. 
Thus we have obtained the following up-to-constant estimate:
\begin{itemize}
 \item There exist $0 < c_1 < c_2 < \infty$ such that
\begin{equation}\label{eq:niuptoc}
            c_1 \, \Prob[A_{n-1}] \, e^{-n\xi}
   \leq \Prob[A_0] \leq c_2 \, \Prob[A_{n-1}] \, e^{-n \xi}.
\end{equation}
\end{itemize}
{Equivalently, we can rephrase this estimate as follows:
\begin{itemize}
 \item For any initial configuration $(V_0,V_1, V_2, x_1, x_2)$, there exists 
  $0 < c_1 < c_2 < \infty$ depending on the initial configuration such that
\begin{equation}\label{eq:uptocstproba}
            c_1  \, e^{-n\xi}
   \leq \Prob[A_0] \leq c_2\, e^{-n \xi}.
\end{equation}
\end{itemize}}
 The final fact deals with convergence to the invariant 
 distribution.
 
 \begin{itemize}
 \item   Recall the definition of $\invprob_{k}^*$ and  $\wt \invprob_k   =\wt \invprob_k(V_0,V_1,V_2,x_1,x_2) $ at the beginning of this section. There exist universal constants $c,u > 0$  such that
 \begin{equation}\label{eq:totalvariation}
d_{\op{TV}}\left(\wt \invprob_{n/2}
,\invprob_{n/2}^*\right)<c \, e^{-n u}.
\end{equation}  
Here and throughout this work we use $d_{\op{TV}}(\cdot,\cdot)$ to denote the total variation distance between two probability measures.
\end{itemize}

In \cite{Law1995} and \cite{Law1998} the
measure $\invprob^*$ was constructed and it was shown that the total
variation distance goes  to zero uniformly, but the estimate
was not exponential.  For the exponential rate see \cite{LSWTrilogy3} for $d=2$
and \cite{LawVer} for $d=3$.  These later papers actually treat a slightly more
general situation, but a particular case of them gives the marginal distribution
on one path, and the distribution of the second path given the first path,
is determined as an $h$-process. These arguments could be simplified
somewhat in our case.  Let us summarize the main idea.

Let us
write $\partial_k = \partial  \ball_{-k}$. Consider Brownian motions $W^1,W^2$ starting on $ \partial_n$.  We
will consider the conditional distribution on $W^1[0,T_0^1],
W^2[0,T_0^2]$ given $A_0$. 
We consider this as a probability measure on ordered pairs of curves which depends on an initial configuration $(V_0,V_1, V_2, x_1, x_2)$. Let $\bar\gamma=(\gamma_1, \gamma_2)$ and $\bar\gamma'=(\gamma'_1, \gamma'_2)$ be two pairs of random curves, whose laws are such measures with possibly different initial configurations.
Let
$\bar \gamma_k =(\gamma^1_k, \gamma^2_k)$ and $\bar \gamma'_k =(\gamma^{\prime 1}_k, \gamma^{\prime2}_k)$ be the curves stopped 
at time $T^j_{-k}$.  We write $\bar \gamma_k =_j \bar \gamma_k '$ if 
the pairs of paths agree from their first visit to $\partial _{j+k}  $
to their first visit to $\partial_{k}$. Both the existence of the invariant measure and the estimate on the rate of convergence come from the following fact.
\begin{itemize}
\item 
 If $\bar \gamma, \bar \gamma'$ have
different initial configurations in $\ball_{-n}$, we can couple
$\bar \gamma_0,\bar \gamma'_0$ such that they each have the distribution of the paths given $A_0$ and such that, except
on an event of probability $O(e^{-un})$,
\[      \bar \gamma_0 =_{n/2} \bar \gamma'_0 .\]
\end{itemize}

Let us explain a little bit about this coupling.  The two
distributions we are coupling are those of the pairs
of paths given $A_0$.  We view
$\bar \gamma_{n}, \bar \gamma_{n-1},\bar \gamma_{n-2},
\ldots, \bar \gamma_0$ as a non-homogeneous Markov chain
whose states are pairs of paths.  When we write
$\bar \gamma_k = _j \bar \gamma_k'$ we mean that the
paths agree from their first visit to $\partial_{k+j}$
to the first visit to $\partial_k$. The time durations of
the paths up to the first visit to $\partial_{k+j}$ may
be different. 

Unlike some coupling rules, this coupling allows for occasional
decoupling of paths.  Suppose that $\bar \gamma_k =_j
\bar \gamma_k'$, that is, the paths agree from $\partial_{k+j}$
to $\partial_k$.  Then one can show that, the conditional
distributions of the remainders of the paths have  total variation distance bounded above by $O(e^{-\alpha j})$.  This takes a
little work, but roughly this is because paths that are not
going to intersect do not want to return to $\partial_{k+j}$.  Given
this, we can couple $\bar \gamma_{k-1}$ and $\bar \gamma_{k-1}'$
such that, except on an event of probability $O(e^{-\alpha j})$,
we have $\gamma_{k-1} =_{j+1} \gamma_{k-1}'$.  If they are
decoupled, we say $\gamma_{k-1} =_0 \gamma_{k-1}'$.  If the
paths are decoupled, one can use the separation lemma followed
by the separation at beginning lemma to find a way to couple
the paths in the next step with positive probability.   Once one has this, one compares to a simple Markov chain on the integers that moves from $j$ to $0$ with probability $O(e^{-\alpha j})$
and otherwise from $j$ to $j+1$. It is not hard to see that this Markov chain has probability at least $1-O\big(e^{-un}\big)$ to be at a location to the right of $n/2$ at time $n$ for some $u>0$, regardless of where it started. {For a proof of this, see Proposition 2.32 of \cite{Law2020}.}

Although not needed in this work, it is worth mentioning that a variant of the above coupling scheme\footnote{In this variant, we construct the quasi-invariant  measure  $\invprob^*$ on $\state^*$ by finding a consistent family of positive measures  $\invprob^*_k$ on $\state^*_k$ rather than working with a probability measure out of conditioning only.} gives the following precise asymptotics of the non-intersection probabilities. There exist a universal $u>0$ and
$$c = c(V_0,V_1,V_2,x_1,x_2)
 \asymp \Prob[A_{n-1}]$$ such that 
\begin{equation}
\Prob[A_0] = c  \, e^{-\xi n} \, [1 + O(e^{-un})],
\end{equation}
where the error term is uniform. 
 
 We will need a variant of this proposition where the Brownian
 motions tend to a point in $\R^d$ rather than to infinity.  The results follow
 almost immediately given invariance of (time changes of)
 Brownian paths in $d=2,3$ under
  inversion, see  Proposition \ref{invertprop}.

We consider the set of paths ``started at infinity stopped when
  they reach the sphere of radius $e^k$ about the origin'' and denote it by $\newstate_k$.  Let $\newstatetwo_k$ denote the set of ordered pairs of paths $\bar \gamma = (\gamma^1,\gamma^2)\in \newstate_k\times\newstate_k$. The probability measure $\invprob^*$ induces via inversion a measure $q_k$ on $\newstatetwo_k$.  We normalize so that $\|q_0\| = 1$; in this case, we also denote it by $\invprob$. For other $k$, we normalize so that $\|q_k\| =e^{k(\eta+d-2)}$. 
  If $j < k$, to get $q_j$ from $q_k$ we do as before:
\begin{itemize}
\item  Choose $(\gamma^1,\gamma^2)$ from $q_k$, and let $x,y$ be
    the endpoints. 
\item  Start independent Brownian motions at $x,y$;
stop them when they reach $\ball_j$; concatenate these with the original
paths; and kill the process if one of the paths does not reach $\ball_j$ or
if the concatenated paths intersect.
\item  Then the measure restricted to pairs that have not been killed
is $q_j$. 
\end{itemize}
 
    Let $\doublestate$ be the set of doubly infinite curves $\gamma:(-\infty,\infty)
   \rightarrow \R^d$ with $\gamma(t) \rightarrow \infty$
   as $t \rightarrow \pm \infty$.
   We consider two such curves the same if they are time translates of each other. Let $\doublestate_k$ be the set of such curves
    that intersect $\ball_k$; in this case there is a first and last intersection,
    and hence we can write $\gamma$ uniquely as 
    \[     \gamma = \gamma^1 \oplus \omega \oplus [\gamma^2]^R, \]
where $(\gamma^1,\gamma^2)\in\newstatetwo_k $. 
     Let $\doublestate^k \subset 
    \doublestate_k$ be the set of such curves that have a cut point in
    $\ball_k$.  We define the measure $\pi_k$ on $\doublestate^k$ as follows:
    \begin{itemize}
    \item     Choose $(\gamma^1,\gamma^2)$ from $q_k$, and let $x,y$ be
    the endpoints.
    \item  Choose $\omega$ from $\mu_{x,y}$ and then restrict the measure
    to curves 
    $\gamma = \gamma^1 \oplus \omega \oplus [\gamma^2]^R \in \doublestate^k$.
   \end{itemize}
   Note that if $j \leq k$, we can also get $\pi_j$ by
   changing the second bullet to
   \begin{itemize}
   \item     Choose $\omega$ from $\mu_{x,y}$ and then restrict the measure
    to curves 
    $\gamma = \gamma^1 \oplus \omega \oplus [\gamma^2]^R \in \doublestate^j$.
    \end{itemize}
   We get the following properties.
   \begin{itemize}
   \item  There exists $c_*\in(0,\infty)$ such that 
\begin{equation}\label{eq:cstardef}
   \|\pi_k \| = c_* \, e^{\eta k}.
\end{equation}   Note that here the factor of $e^{+k(d-2)}$ cancels for $d=3$  because of the scaling rule
   for $\mu_{x,y}$. To see $c_*<\infty$, one simply applies Lemma \ref{dec8.lemma2}. The fact that $c_*>0$ seems trivial given the existence of cut points for Brownian motion, but it does require some justification.  As some ingredients will not be available until Section \ref{sec:4}, we will postpone  its proof to Section \ref{sec:cstar}. Note that nothing in between requires this fact. 
   \end{itemize}

   We now consider paths of finite length.  Let $\wh \state_k$ denote the set of
 ordered disjoint pairs of curves  $\bar \gamma
 = (\gamma^1,\gamma^2)$ starting in $\ball_k^c$ and ending at their first
 visit to $\partial \ball_0$; let $\state_k$ be the set of such
 ordered pairs of  curves that start on $\partial \ball_k$.  For each $\bar \gamma \in \wh \state_k$,
 there is a unique $\bar \gamma^{(k)} \in \state_k$ obtained by starting the
 curves at their first visits to $\partial \ball_k$.  If $\bar \gamma_1,
 \bar \gamma_2 \in \wh \state_k$, we write $\bar \gamma_1 =_k
 \bar \gamma_2$ if $\bar \gamma_1^{(k)} = \bar \gamma_2^{(k)}$, that is, if
 the curves agree starting at the first visits to $\partial \ball_k$.
 
   There is a natural bijection between
 $\state_k$ and $\state_k^*$ obtained from inversion as in
 Proposition \ref{invertprop}  
 (being careful about the time change). 
 Let 
  $\invprob  _k$ 
 denote the probability measure on $\state_k$ induced from $\invprob_k^*$.
 With slight abuse of notation, for any $j > 0$ and $x \in \R^d$,
 we will also write  $\invprob_k$ and
 $\state_k$   for the
 corresponding measures
 and sets of curves from $\partial \ball_{k-j}(x)$ to $\partial \ball_{-j}(x)$
 obtained by Brownian scaling and translation. 
 
 If $\bar \gamma = (\gamma^1,\gamma^2)  \in \wh \state_k\mbox{ or }\newstatetwo_0$ with terminal points $ x_1,x_2
 \in \partial \ball_0$,  and $a > 0$, let $\mu^{\bar \gamma,a}$
 denote $\mu_{x_1,x_2}$ restricted to those curves $\omega$  such that there exists a cut
 point of $\gamma^1 \oplus \omega \oplus [\gamma^2]^R$ contained in $\ball_{-a}$.
 Let 
 \begin{equation}\label{eq:Phidef}
 \Phi_a(\bar \gamma) := \big\|\mu^{\bar \gamma,a}\big\|\quad\mbox{ and }\quad  \wh\Phi_{a,k}(\bar \gamma) := \big\|\mu^{\bar \gamma,a}[1_{\omega\in \ball_k}]\big\|.
 \end{equation}
It is easy to see that  for $a>0$, $\invprob[\Phi_{a}]$ is equal to  {$c_*e^{-a\eta}$} from \eqref{eq:cstardef}.
We write
 $\Phi_a^{(k)}(\bar \gamma) = \Phi_a(\bar \gamma^{(k)})$ where $\bar \gamma^{(k)}$
 is the truncated version as above.  Note that $\Phi_a^{(k)} \geq \Phi_a$. Observe that if $\omega\subset\ball_{k/2}$, then a cut point of $\gamma^{1,k/2}\oplus\omega \oplus [\gamma^{2,k/2}]^R$ is also a cut point of $\gamma^1 \oplus \omega \oplus [\gamma^2]^R$. See Figure \ref{fig:1}. Applying  Lemma \ref{dec8.lemma2} to $\omega$, we see that there exists   
  $c > 0$ such that if $k \geq 1$, then
 \begin{equation}\label{eq:localcp}
    \Phi_a(\bar \gamma )
  \leq \Phi_a^{(k/2)}(\bar \gamma) \leq \wh\Phi_{a,k/2}(\bar \gamma )
   +   c \, e^{-k(d-1)/{6}}
\leq  \Phi_a(\bar \gamma )
   +   c \, e^{-k(d-1)/{6}}.
 \end{equation} 
See Figure \ref{fig:1} for an illustration. 
As a consequence of \eqref{eq:localcp},
\begin{equation}\label{eq:localcpE}
       \invprob\left[\Phi_a\right] \leq \invprob\left[\Phi_a^{(k/2)}\right]
   \leq  \invprob\left[\Phi_a\right] +   c \, e^{-k(d-1)/{6}}.
\end{equation} 
\begin{figure}[h]\label{fig:1}
\centering
\includegraphics[scale=0.5]{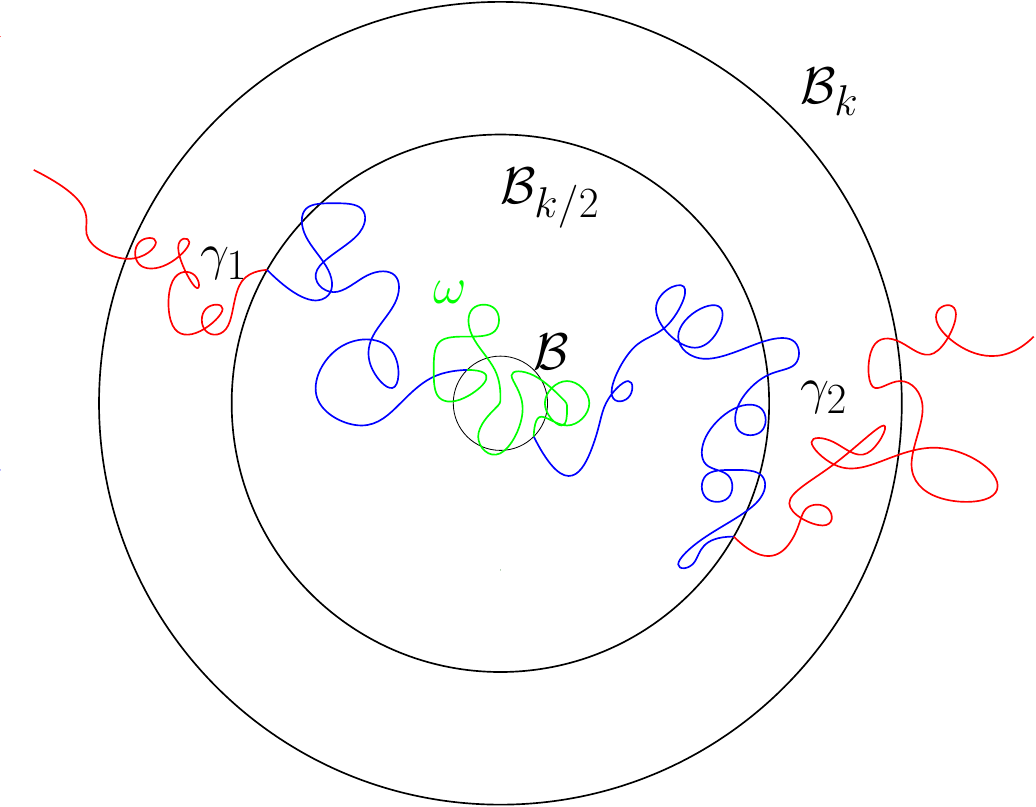}\caption{\small{ We further decompose $\gamma^1$ and $[\gamma^2]^R$ at first entry of ${\cal B}_{k/2}$. Under the conditioning and restriction, excluding an exceptional set with fast-decaying measure (with the help of Lemma~\ref{dec8.lemma2}), $\omega$ stays in ${\cal B}_{k/2}$. This implies that in this case, if there is a cut point of ``blue-green-blue'' part of the path on the segment $\omega$, it is a global cut point of $\gamma^1\oplus\omega\oplus[\gamma^2]^R$ as well. This implies the second inequality of (\ref{eq:localcp}).}}
\end{figure}   

We now state and prove the main claims of this section.
     \begin{proposition}  \label{prop101}
For $k\geq 1$ and $Q$  a probability measure on $\wh \state_k$, let $Q_k$ denote the measure induced on $\state_k$ by $Q$.
   Then there exists $c > 0$ such that if $a \geq 1/10$
(see (\ref{eq:cstardef}) for the definition of $c_*$), 
\begin{equation}\label{eq101}
   \left|Q[\Phi_a]-c_* \,e^{-a\eta} \right| \leq  
 c  \left[ e^{-k(d-1)/{6}} + d_{\op{TV} }\left(Q_{k/2} , \invprob_{k/2}\right)\right].
\end{equation}
 \end{proposition}
 The condition $a \geq 1/10$ is only  needed for $d =3$
  to guarantee that $\Phi_a$
 is uniformly bounded. 
 
 \begin{proof} 
By definition,
 $
 \invprob_{k/2} \left[\Phi_a\right] =\invprob\left[\Phi_a^{(k/2)}(\ov\gamma)\right]$.
Hence \eqref{eq:localcpE} can be rewritten as
 \begin{equation}\label{eq:QQk2Q}
   \invprob\left[\Phi_a\right] \leq \invprob_{k/2} \left[\Phi_a\right]
   \leq  \invprob\left[\Phi_a\right] +   c \, e^{-k(d-1)/{6}}.\end{equation}
Similarly, the inequality above also holds if we replace $\invprob$ by $Q$. Moreover, by \eqref{eq:cstardef}, we see that $\invprob\left[\Phi_a\right] =c_* \,e^{-a\eta}$. 
Hence, it follows that
\[    
\left|Q[\Phi_a]-c_* \,e^{-a\eta} \right| \leq  
 c  e^{-k(d-1)/{6}} + d_{\op{TV}}\left(Q_{k/2} , \invprob_{k/2}\right)\sup_{\ov\gamma\in\wh\state_{k/2}} \Phi_a(\ov\gamma)  .\]

When $d=2$, it follows from Lemma \ref{dec8.lemma1} that $\sup_{\ov\gamma\in\wh\state_{k/2}} \Phi_a(\ov\gamma)<\infty$. When $d=3$, since $a\geq 1/10$, the finiteness of $\sup_{\ov\gamma\in\wh\state_{k/2}} \Phi_a(\ov\gamma)$ follows from the same type of argument as in the proof of Lemma \ref{dec8.lemma2} for the case of $d=3$. This finishes the proof of \eqref{eq101}. \end{proof}
The following spatially-inverted version of \eqref{eq:totalvariation}  (see Proposition \ref{invertprop} for more details on the inversion) controls the total variation distance on the right side of \eqref{eq101}.
  Suppose
$V_0,V_1,V_2 $ are pairwise disjoint subsets of  $\{|z| \geq e^{k}\}$  and $x_j \in V_j $, $j=1,2$, such that $V_0$, $V_0\cup V_1$, and $V_0\cup V_2$ are closed sets.We again refer to such quintuples $(V_0,V_1,V_2,x_1,x_2)$ colloquially as initial configurations. Let
 $W_t^1,W_t^2$ be independent Brownian motions starting at $x_1,x_2$ and,
 as before,  let $\Gamma^j =
 V_j \cup   W^j[0,T_{0}^j]$, $j=1,2$, where $T_0^j$ is the time at which $W^j$ first hits $\cB$. For $m<k$, let $Q_{m}$ denote
the probability measure on  $\state_{m}$ obtained by considering $W^j[T_m^j,T_{0}^j]$, $j=1,2$,conditioned on the following event\footnote{The conditioning on $\{T_0^1,T_0^2 < \infty\}$ is redundant if $d=2$.}
$$ \left\{(V_0\cup\Gamma^1)\cap \Gamma^2 =\Gamma^1\cap (V_0\cup\Gamma^2) = \eset \right\}\bigcap \left\{T_0^1,T_0^2 < \infty\right\}.$$
Again, to make sure our results make sense, we only consider initial configurations such that the event above has positive probability before conditioning.

\begin{proposition}   \label{prop102}
  There exist $ u,c > 0$  such that for 
all $(V_0,V_1,V_2,x_1,x_2)$ as above,  
\begin{equation}\label{eq:prop102}
 d_{\op{TV}}\left(Q_{k/2} , \invprob_{k/2}\right)\leq c e^{-uk}.
\end{equation}
\end{proposition}

Before we end this section we state without proof the following observation which will be implicitly recalled repeatedly in the next section.
\begin{observation}\label{obs:upper}
Suppose $z\notin \{x,y\}$ and $s>0$. 
Then there exists $C$ only depending on $|x-z|\wedge |y-z|$  such that \(\mu_{x,y}(E^s(z))\le C e^{-\eta s}\), where
$E^s(z)$ is the event that there is a cut point in $\cB_{-s}(z)$. 
\end{observation}

\section{Proofs}\label{sec:4}

\subsection{Proof of Theorem \ref{theorem.dec16.2} }
\label{sec:proofmaintheorem}

Let us fix $z\in\mathbb{R}^d\setminus\{0,\e\}$ and write
\[  \Lambda^0_k = \{\gamma \in \Gamma_{0,\e}:
  \dist (z,\gamma ) \leq e^{-k}  \}.
\]
Moreover, for $\gamma\in \Lambda^0_k$,  decompose $\gamma$ at first hitting of and  last exit from $\ball_{-k}(z)$  and write these truncated paths as $\gamma_1$ and $\gamma_2$. Then we write 
\[\Lambda_k=\{\gamma\in\Lambda^0_k\,:\, \gamma_1\cap \gamma_2=\emptyset\}\mbox{ and }\Theta_k = \{ \gamma \in \Gamma_{0,\e}: \dist(z,\A_\gamma )< e^{-k }\}.\]
Note that $\Lambda^0_k,\Lambda_k, \Theta_k$ decrease with $k$ and $\Theta_k
  \subset \Lambda_k\subset \Lambda^0_k$.
We let $S=\partial \ball_{-(b+k)}(z)$, {$D=\R^d\setminus\ball_{-(b+k)}(z)$} and write
\begin{equation}\label{eq:gkdefinition}
g_{k}=   \int_{S } \int_{S } \mu^D_{0,x} \times \mu_ {y,\e}^D \left[ 1_{\gamma_1\cap \gamma_2=\emptyset}\right]\sigma(dx,dy).
\end{equation}
To prove \eqref{eq:T1c1}, it
suffices to show that there exist positive $c_*, c',u$, and $m$ (all independent of $z$) such that
if $\dist(0,z,\e) \geq e^{-b}$, then for $k \ge m$ and $1 \leq a \leq 2$, 
\begin{equation}\label{eq4.1}
 \left|\mu[\Theta_{k+b+a}]/g_{k}
  - c_* \, e^{-a\eta}  \right|
   \leq c'\, e^{-uk}.
\end{equation} 
Indeed if we write $f_j = f_j(z) = \log \mu[J_j(z)]=\eta j+\log\mu[\Theta_j]$,
   then \eqref{eq4.1} implies that for $1 \leq a \leq 2$,
   $k \geq m$,
   \[        |f_{k+b+a} - f_{k+b+1}|   \leq c  \, e^{-uk} \]
   (both $c$ and all other constants below depend only on $c',u$, and $\eta$). 
 By summing we see this holds for all $a \geq 1$  for a different
 constant $c$ and by exponentiating we see that for $j \geq k \geq m+1$, 
 \[\mu[J_{j+b }] = \mu[J_{k + b}]\, [1+O(e^{-uk})],\]
 in other words, $\mu[J_k$] is a Cauchy sequence. This confirms \eqref{eq:T1c1}, hence \eqref{eq:G1def}.

We now prove ({\ref{eq4.1}). Let $B = \ball_{-b-k}(z)$.
Any path in $\Lambda^0_{b+k}$  can be written
as $\gamma = \gamma^1 \oplus \omega \oplus  [\gamma^2]^R $ where $\gamma^1$
is $\gamma$ stopped at 
the first visit to $S=\partial  B$,
which we denote by $x$; $[\gamma^2]^R$ is the
reversal of $\gamma^R$ stopped at the first visit to  $S$,
which we denote by $y$;
and $\omega$ is chosen from $\Gamma_{x,y}$.  In other words, $\mu$ restricted to $\Lambda^0_{b+k}$ can be written as
\begin{equation}\label{eq:decompexample}
      \int_{S } \int_{S } \left[\mu^D_{0,x} \oplus \mu_{x,y}
 \oplus \mu_ {y,\e}^D \right] \, \sigma(dx,dy),
\end{equation}
 where $D = \R^d \setminus B$.  

{Let $Q_D$ be the probability measure on a pair of paths $(\gamma_1,\gamma_2)$ given by
\begin{equation}
Q_D((\gamma_1,\gamma_2)\in \cdot) =  \frac{1}{g_k}\int_{S } \int_{S } \mu^D_{0,x} \times \mu_ {y,\e}^D \left[ 1_{\gamma_1\cap \gamma_2=\emptyset} (\gamma_1,\gamma_2)\in \cdot \right]\sigma(dx,dy),
\end{equation}
where $g_k$ is defined in \eqref{eq:gkdefinition}.
We let $Q$ stand for $Q_D$ after appropriate scaling and translation so that $Q$ fits} in the setting of Propositions \ref{prop101} and \ref{prop102}, where we let 
\begin{equation}\label{eq:quin}
(V_0,V_1,V_2,x_1,x_2)=(\eset,\{0\},\{\e\},0,\e).
\end{equation} Recall the definition of $\Phi_a$ before Proposition \ref{prop101}. It is not difficult to see that
$$
Q[\Phi_a]= \mu[\Theta_{k+b+a}]/g_{k}.
$$
Applying Propositions \ref{prop101}  and \ref{prop102}  to $Q$, we see that uniformly for $a\in[1,2]$, there exists $c_*, c, c',u>0$ such that 
\begin{equation}\label{eq:1.1final}
\left|Q[\Phi_a]-c_* \,e^{-a\eta} \right| \leq  
 c  \left[ e^{-k(d-1)/{6}} + d_{\op{TV}}\left(Q_{k/2} , \invprob_{k/2}\right)\right] \leq   c' e^{-uk}.
\end{equation}  
     
We now prove the second claim (\ref{eq:T1c2}). Consider $\mu_{0,\infty}$, the path measure between $0$ and infinity, defined in the following manner. 
More precisely, let $K>0$ and write \[\mu_{0,\infty}=\int_{S} \mu_{0,\zeta}\oplus \nu^{D}_{\zeta,\infty} \,  \sigma(d\zeta),\]
where $S=\{|z|=K \}$, $\sigma$ is the surface measure on $S$, 
and  $\nu^{D}_{\zeta,\infty}$ is the excursion measure on $\{|z|> K\}$ from $\zeta$ to infinity, defined as the limiting measure of Brownian motion from $(1+\epsilon)\zeta$ conditioned to avoid $\{|z|< K\}$ before hitting $\{|z|=L\}$, as $\epsilon\to 0$ and $L\to\infty$ (when $d=3$ we can consider the infinite Brownian motion conditioned to avoid $\{|z|< K\}$ directly), with total measure reweighed by $1/K$. It is easy to check that the definition is consistent for any choice of $K$ and that $\mu_{0,\infty}$ satisfies the same scaling property as $\mu_{0,\e}$ in \eqref{eq:scalingpty}. Note that this definition is mainly interesting for $d=2$, as in the case of $d=3$ the path measure $\mu_{0,\infty}$ is merely a constant multiple of the law of a Brownian motion started from 0.

Now, let $\phi$ be  the inversion  map on $\R^d$
with respect to the circle/sphere $\{|y-\e|=1\}$ and consider the path measure $\overline{\mu}_{0,\infty}$ defined by the pushforward of $\mu_{0,\e}$ by $\phi$, which is supported on paths from $0$ to infinity.  
By the inversion invariance of the trace of Brownian motion, the law of the trace (and hence of the set of cut points thereof) induced by $\overline{\mu}_{0,\infty}$ and $\mu_{0,\infty}$ are the same up to multiplication by a deterministic constant.

 Under this setup, the first claim \eqref{eq:T1c1} still follows. 
In this case, thanks to the scaling property and rotation invariance of $\mu_{0,\infty}$, we know that the corresponding cut-point Green's function is given by
\begin{equation}
\greencut_{0,\infty}(z)=c |z|^{-\eta}
\end{equation}
for some $c>0$. Taking into consideration the covariant derivative, we obtain \eqref{eq:T1c2}, the explicit formula of $\greencut(z)$.

\subsection{Decomposition of paths and up-to-constant two-point estimates}\label{sec:uptoc}
In this subsection, we will introduce various notations regarding path decomposition and prove a preliminary up-to-constant two-point estimate, which serves as an archetype of such argument. Variants of this estimate will appear more than once in the subsequent subsections.

  We fix $z,w\in V\in{\cal D}$, with $|z-w| = e^{-b}$.  Note that $|z|,|w|,|z-\e|,|w- \e|
   \geq 2 e^{-b}/\sqrt 3 $.   We will write 
  $B_1 = \ball_{-(b+3k)}(z),
B_1' =  \ball_{-(b+3k)}(w) ,B_2 = \ball_{-(b+3k+a)}(z),\\
B_2' =  \ball_{-(b+3k+a')}(w),  S = \partial B_1,
S' =\partial B_1'  $ and let $D$ denote the unbounded
domain bounded by $S$ and $S'$.   

Let $\newset=\{\gamma \in \Gamma_{0,\e}: \exists s<t\;\textrm{such that}\; \gamma(s)\in B_1,\gamma(t)\in B_1'\}$ and 
$\newset'=$ $\{\gamma \in \Gamma_{0,\e}: \exists s<t\;\textrm{such that}\; \gamma(s)\in B_1',\gamma(t)\in B_1\}$.
We can focus on $\newset$, which is sufficient due to symmetry. 

We start with the decomposition of paths. A path $\gamma \in \newset$
can be written as  
\begin{equation}\label{eq:gammadec}
\gamma = \gamma^1 \oplus \omega_1 \oplus \gamma^*
 \oplus \omega_2  \oplus  [\gamma^2]^R
\end{equation}
where
 \begin{itemize}
 \item  $\gamma^1$
is $\gamma$ stopped at 
the first visit to $S$; we denote the endpoint by $x$,
\item the reversal of $\gamma^2$ is $\gamma$ started at the last visit to $S'$;
 we denote the starting point by $y$, 
\item  $\gamma^*$ is an excursion in $D$  starting on $S$
ending on $S'$; we let $x',y'$
denote the initial and terminal vertices of $\gamma^*$, and
\item   $\omega_1 \in \Gamma_{x,x'}$,  
   $\omega_2 \in \Gamma_{y',y}$.
\end{itemize}
This decomposition is not necessarily unique. 
For a given $\gamma$, the 
number of such decompositions is the number of excursions from $S$
to $S'$ in $D$ that are contained in the path. However, if $\gamma \in \newset
\setminus \newset'$, the decomposition is indeed unique.
Later, we will see that paths in $\newset \cap \newset'$ that contain cut points in $B_2$ and $B'_2$ will form an exceptional
set of smaller measure, and are hence negligible.

In order to avoid the issue of non-uniqueness of the decomposition, we now consider a new measure $\wt\mu$ defined through concatenation of paths and work mainly with this measure. In a moment we will see that it will not differ too much from $\mu$ for our purpose in this subsection.

Let $\wt\mu$ be the measure on quintuples of paths $(\gamma^1,\omega_1 ,\gamma^*,\omega_2,\gamma^2)$, as well as (slightly abusing the notation) the induced measure on $\Gamma_{0,\e}$ of the concatenated path $\gamma = \gamma^1 \oplus \omega_1 \oplus \gamma^*
 \oplus \omega_2  \oplus  [\gamma^2]^R $, constructed as follows:
\begin{itemize}
\item  First choose $ \gamma^1,\gamma^2, \gamma^*$ independently from the measures:
\begin{itemize}
\item  Brownian motion started at $0$ stopped upon reaching  $S$  
(for $d=3$, restricted to the event that it reaches the ball), 
\item  Brownian motion started at $\e$ stopped upon reaching  $S'$  
(for $d=3$, restricted to the event that it reaches the ball), and
\item  the  excursion measure in $D$ from $S$ to $S'$,  i.e.,
\begin{equation*}
  \int_{S} \int_{S'}  \mu_{x',y'}^D \,   \sigma(d x') \, \sigma(d y'), 
\end{equation*} 
\end{itemize}
respectively. 
 \item  Given $(\gamma^1,\gamma^*,\gamma^2)$, and hence $(x,y,x',y')$, choose $\omega_1, \omega_2$
 independently from $\mu_{x,x'}$ and $\mu_{y',y}$, respectively.
\end{itemize}
Let $\cU$ be the set of quintuples of paths $(\gamma^1,\omega_1 ,\gamma^*,\omega_2,\gamma^2)$ such that after the concatenation  \eqref{eq:gammadec},
$\omega_1$ contains a cut point of $\gamma$ inside $B_2$ and
$\omega_2$ contains a cut point of $\gamma$ inside $B_2'$. Note that if $(\gamma^1,\omega_1 ,\gamma^*,\omega_2,\gamma^2)\in{\cU}$, then the event $N_0$ define in \eqref{eq:feb4.1} necessarily occurs.
Let $\wh \newset$ be the subset of $\newset$ induced from $\cU$, i.e.\ consisting of paths $\gamma$ concatenated from $(\gamma^1,\omega_1 ,\gamma^*,\omega_2,\gamma^2)\in{\cU}$ according to \eqref{eq:gammadec}. 
\begin{rmk}\label{rem:dom}
Restricted to $\newset $, $\mu$ is dominated by $\wt\mu$ (regarded as a measure on $\Gamma_{0,\e}$). To see this,  we decompose the paths in $\Gamma_{0,\e}$ in a unique way, such that $\gamma^*$ is the first excursion between $S$ and $S'$, and then observe that when comparing $\mu$ with $\wt\mu$ there are additional constraints on the path $\omega_1$ associated with $\mu$ since it is not allowed to make excursions from $S$ to $S'$. Moreover, it is not difficult to see  that $\mu$ agrees with $\wt\mu$ on $\newset\setminus\newset'$, the set of paths that has only one excursion between $S^1$ and $S^2$. 
\end{rmk}

We now construct a measure $\cP_0$ as follows: choose $ \gamma^1,\gamma^2, \gamma^*$ independently from the measures:
\begin{itemize}
\item  Brownian motion started at $0$ stopped upon reaching  $S$  
(for $d=3$, restricted to the event that it reaches the ball), 
\item  Brownian motion started at $\e$ stopped upon reaching  $S'$  
(for $d=3$, restricted to the event that it reaches the ball), and
\item  the  excursion measure in $D$ from $S$ to $S'$,  i.e.,
\begin{equation*}
  \int_{S} \int_{S'}  \mu_{x',y'}^D \,   \sigma(d x') \, \sigma(d y'), 
\end{equation*} 
\end{itemize}
(up to now everything is the same as the construction of $\wt\mu$ except we do not patch up with $\omega_1$ and $\omega_2$) and finally restrict to the event $N_0$
\begin{equation}\label{eq:feb4.1}
N_0:=\big\{ \gamma^1 \cap \gamma^* = \gamma^1 \cap \gamma^2 = \gamma^*
 \cap \gamma^2 = \eset  \big\}.
\end{equation}
From the definition we see that
\begin{equation}\label{eq:cPcard}
\|\cP_0\|= \int_{S'\times S' } \int_{S\times S }\mu^D_{0,x} \times \mu^D_{x',y'} \times \mu_ {y,\e}^D \left[ N_0\right]\sigma(dx,dx')\sigma(dy',dy).
\end{equation}
The next proposition gives an up-to-constant bound on $\|\cP_0\|$.
\begin{prop}\label{prop:cp0bound}
There exists a constant $c>0$ that depends on $V$ but not on $z,w$, such that
\begin{equation}\label{eq:cp0bound}
c^{-1} e^{-(b+6k)\eta}\leq \|\cP_0\|\leq c e^{-(b+6k)\eta}.
\end{equation}
\end{prop}

\begin{figure}
	\includegraphics[scale=1]{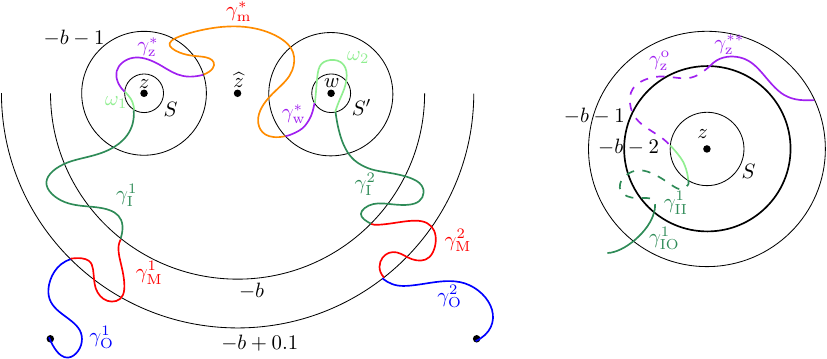}
	\caption{{Illustration of the proof of Proposition \ref{prop:cp0bound}. If we write $k$ next to a circle or circle arc, then this circle or circle arc has radius $e^{k}$.}}
	\label{fig:up-to-constant}
\end{figure}

\begin{proof}
{We start with decomposition of paths. See Figure \ref{fig:up-to-constant} for an illustration.}
By the definition of $\dyadic$ and basic geometric calculation, if $\wh z$ is the midpoint between $z$ and $w$ then $0,\e\notin \ball_{-b+0.1}(\wh z)$.
We further truncate $\gamma^1$ and $\gamma^2$ upon reaching the ball $\ball_{-b+0.1}(\wh z)$. More precisely, we let $\gamma^1_{\rm O}$ (resp.\ $\gamma^2_{\rm O}$) denote $\gamma^1$ (resp.\ $\gamma^2$) until reaching $\ball_{-b+0.1}(\wh z)$. Note that by the definition of $\dyadic$, we also truncate $\gamma^1$ from first entrance to $\ball_{-b}(\wh z)$ until it stops upon hitting $S$ and call it $\gamma^1_{\rm I}$.
Similarly, we truncate $\gamma^2$ from first entrance to $\ball_{-b}(\wh z)$ until it stops upon hitting $S'$ and and call it $\gamma^2_{\rm I}$. We call the middle pieces $\gamma^1_{\rm M}$ and $\gamma^2_{\rm M}$ such that $\gamma^1=\gamma^1_{\rm O}\oplus\gamma^1_{\rm M}\oplus \gamma^1_{\rm I}$ and $\gamma^2=\gamma^2_{\rm O}\oplus\gamma^2_{\rm M}\oplus \gamma^2_{\rm I}$.

We then consider the excursion $\gamma^*$. Write $\gamma^*=[\gamma^*_{\rm z}]^R\oplus \gamma^*_{\rm m}\oplus \gamma^*_{\rm w}$, where $[\gamma^*_{\rm z}]^R$ equals $\gamma^*$ from the beginning until the first exit from $\ball_{-b-1}(z)$, $\gamma^*_{\rm w}$ equals $\gamma^*$ from the last entry into $\ball_{-b-1}(w)$ until the end, and $\gamma^*_{\rm m}$ is the middle segment, which is a curve from $\partial\ball_{-b-1}(z)$ to $\partial\ball_{-b-1}(w)$. We further decompose $\gamma^*_{\rm z}$  at its first entry into $\ball_{-b-2}(z)$ as $\gamma^{**}_{\rm z}\oplus\gamma^\circ_{\rm z}$ and decompose $\gamma^*_{\rm w}$ as $\gamma^{**}_{\rm w}\oplus\gamma^\circ_{\rm w}$ similarly.

The probability of paths $\gamma^1_{\rm O},\gamma^2_{\rm O}$ reaching
$\ball_{-b+0.1}(\wh z)$ without intersection is bounded above by $O_V(e^{-b\eta})$, thanks to {\eqref{eq:uptocstproba}}. The measure of the set of paths $\gamma^1_{\rm I},\gamma^*_{\rm z}$ reaching $S$ with no intersection is bounded above by a constant multiple of $e^{-3k\eta}$, again thanks to {\eqref{eq:uptocstproba}}, and the same holds for paths $\gamma^2_{\rm I},\gamma^*_{\rm w}$ reaching $S'$ with no intersections. Note that here $\gamma^*_{\rm z}$ and $\gamma^*_{\rm w}$ are Brownian excursions, but $\gamma^\circ_{\rm z}$ conditioned on its initial point is a restriction of the probability measure given by a Brownian motion while the total mass of the measure governing $\gamma^{**}_{\rm z}$ is uniformly bounded and the same statement holds for $\gamma^*_{\rm w}$,  hence the non-intersection probability estimates remain valid. The implicit constants here (and below) satisfy the same dependencies as the constant $c$ in the statement of the lemma. 

To obtain the upper bound in \eqref{eq:cp0bound},
we first sample the truncated parts discussed above, then patch them up with the middle parts $\gamma^1_{\rm M}$, $\gamma^*_{\rm m}$, and $\gamma^2_{\rm M}$ whose total measure is uniformly bounded (the boundedness of the total mass of  $\gamma^*_{\rm m}$ comes from Lemma \ref{dec8.lemma1}, as the complement of the event $C$ in that lemma will {force $\gamma^*$ to intersect $\gamma^1$ and $\gamma^2$}). 

To show the lower bound of \eqref{eq:cp0bound}, when considering $\gamma^1_{\rm O}$ and $\gamma^2_{\rm O}$ we do not only restrict to the paths that are non-intersecting, but also that they are well-separated in the following sense: writing $v_1,v_2\in\partial \ball_{-b+0.1}(\wh z)$ for the ending points of $\gamma^1_{\rm O}$ and $\gamma^2_{\rm O}$, respectively, we say that  $\gamma^1_{\rm O}$ and $\gamma^2_{\rm O}$ are well-separated if
\begin{equation}\label{eq:wellsep1}
\dist(\gamma^1_{\rm O},v_2) \wedge \dist(\gamma^2_{\rm O},v_1) \geq e^{-b+0.1}/10.
\end{equation}
By the inverted version of the separation lemma \eqref{eq:sep1} and {\eqref{eq:uptocstproba}} the measure of paths $\gamma^1_{\rm O},\gamma^2_{\rm O}$ reaching
$\ball_{-b+0.1}(\wh z)$ without intersection and are well-separated is bounded from below by a constant multiple of $e^{-b\eta}$.

We now further decompose $\gamma^1_{\rm I}$ at its first entry into $\ball_{-b-2}(z)$ as $\gamma^1_{\rm IO}\oplus \gamma^1_{\rm II}$ and decompose $\gamma^2_{\rm I}$ as $\gamma^2_{\rm IO}\oplus \gamma^2_{\rm II}$ similarly. When considering $\gamma^1_{\rm II}$ and $\gamma^\circ_{\rm z}$ we do not only restrict to the event such that they are non-intersecting, but also that they are well-separated at both ends\footnote{The well-separatedness near $x,x'\in\partial S$ is not really necessary for this proposition, however it will be vital for the lower bound in the cases where we need to patch up $\omega_1$ and $\omega_2$.} in the following sense:
\begin{itemize}
\item letting $u_1,u_{\rm z}$ be the starting points of $\gamma^1_{\rm II}$ and $\gamma^\circ_{\rm z}$, respectively, and writing $e^{-b'}=e^{-b-2}/50$,
$$\gamma^1_{\rm II} \in \ball_{-b-2}(z)\cup \ball_{-b'}(u_1)\quad\mbox{ and }\quad\gamma^\circ_{\rm z} \in \ball_{-b-2}(z)\cup \ball_{-b'}(u_{\rm z});$$
\item recalling that $x,x'\in S$ are the ending points of $\gamma^1_{\rm II}$ and $\gamma^*_{\rm z}$, respectively,
\begin{equation}\label{eq:wellsep2}
\dist(\gamma^1_{\rm II},x')\wedge \dist(\gamma^\circ_{\rm z},x) \geq e^{-(b+3k)}/10.
\end{equation}
\end{itemize}
By the inverted version of the separation-at-both-ends lemma \eqref{eq:sep3} and {\eqref{eq:uptocstproba}}, assuming that $\dist(u_1,u_{\rm z})\geq e^{-b-2}/10$, 
the probability that $\gamma^1_{\rm II}$ and $\gamma^\circ_{\rm z}$ are non-intersecting and well-separated at both ends is bounded from below by a constant multiple of $e^{-3k\eta}$. We impose the same restriction on $\gamma^\circ_{\rm w}$ and $\gamma^2_{\rm II}$. The total measure of paths that satisfy theses restrictions are still bounded from below by a constant multiple of $e^{-b\eta} [e^{-3k\eta}]^2$. Now, when we patch up with $\gamma^1_{\rm M}$, $\gamma^1_{\rm IO}$, $\gamma^{**}_{\rm z}$, $\gamma^*_{\rm m}$, $\gamma^{**}_{\rm w}$, $\gamma^2_{\rm IO}$, and $\gamma^2_{\rm M}$,  we claim that the measure of paths such that $N_0$ occur is bounded from below by a universal positive constant. {To see this, we note that conditioned on other parts of the paths and on the event that they are well-separated as mentioned above, 
one can impose mild spatial restrictions to each path to be patched (i.e., forcing each of them to stay in a certain region) as to ensure $N_0$ still occurs; these regions can be chosen in a way such that the size of each region as well as the distance between the boundary of this region and the starting and ending points of the respective path are of the same order as the distance between the starting and ending points, where the constants are universal regardless of the conditioning, in particular the actual location of the starting and ending points; hence the measure of each path obeying its respective restriction is bounded from below by a universal positive constant regardless of their starting and ending points.}  This confirms the lower bound of \eqref{eq:cp0bound}.
\end{proof}

\subsection{Proof of the positivity of $c_*$ from \eqref{eq:cstardef}}\label{sec:cstar}
Before working on the proof of Theorem \ref{groundhog} we make a brief detour and prove the the positivity of $c_*$ from \eqref{eq:cstardef} as finally all ingredients are available and we can no longer delay its proof since this fact will be needed in the proof of Theorem \ref{groundhog}.

Recall notations above \eqref{eq:cstardef}. In this subsection, balls are centered at the origin if the center is omitted in the notation and $B(x,r)$ stands for a ball centered at $x$ of radius $r$. 

Note that to prove the positivity it suffices to consider the case of $k=0$. We are actually going to show a stronger result, namely that there is a positive $\mu$-measure such that $\gamma$ has a set of cut-points in $\ball_0$ of Hausdorff dimension at least $\delta=d-\eta$ (recall the notation from Section \ref{sec:intro}). Proofs of this type first appeared \cite[Proposition 2.2]{Law1996}, where cut times rather than cut points were considered. This seemingly small difference makes it difficult to directly infer the positivity of $c_*$ here. Hence we need to adapt the strategy, and derive a ``spatial'' version of \cite[Proposition 2.2]{Law1996} tailored to fit our setting in this work.

We start with the classical tool for proving lower bounds on Hausdorff dimension.
\begin{lemma}[Frostman's Lemma]\cite[Theorem 4.13]{Falconer}\label{lem:Frostman}
Suppose $X\subset\mathbb{R}^d$ and let $\nu$ be a positive measure supported on $X$ with $\nu(X)>0$. For $\wh\delta\in (0, d]$, let the $\wh\delta$-energy, $I_{\wh\delta}(\nu)$ be defined by
\begin{equation}\label{eq:Frostman}
I_{\wh\delta}(\nu):=\int_{\R^d}\int_{\R^d} |s-t|^{-\wh\delta} d\nu(s)d\nu(t).
\end{equation}
If $I_{\wh\delta}(\nu)<\infty$, then the Hausdorff dimension of $X$ is at least $\wh\delta$.
\end{lemma}
Before stating and proving the main ingredients, we need to introduce the following notion of ``well-separation'', similar to \eqref{eq:wellsep1} and \eqref{eq:wellsep2} from Section \ref{sec:uptoc}, which incorporates the inverted version of \eqref{eq:Deltadef}:
\begin{itemize}
\item we call $\bar\gamma=(\gamma^1,\gamma^2)\in\newstatetwo_0$ with ending points $x,y\in\partial\ball_0$ well-separated, if 
$$
\Delta':=\dist(\gamma^1,y)\wedge\dist(\gamma^2,x) \geq 1/10.
$$
\end{itemize}
It is not difficult to see that the claim of this subsection follows from the following lemma and proposition.
\begin{lemma} 
There exists $c>0$ such that 
$$
\invprob[ \Delta'\geq 1/10]\geq c.
$$
\end{lemma}
This lemma follows from an inverted version of the separation lemma \eqref{eq:sep1}.
\begin{prop}\label{prop:omegapatch}
There exists $c>0$ such that for all $x,y\in\partial\ball_0$ such that $|x-y|\geq 1/10$,
$$
\mu_{x,y} [ \;\omega \mbox{ is good and makes a set of cut points of dimension $\geq \delta$ in $\ball_{-1}$}\; ] \geq c,
$$
where $\omega$ is good {if
for $\omega=\omega^-\oplus\omega^*\oplus\omega^+$
with
\begin{itemize}
\item $\omega^-$ is $\omega$ stopped at the first entry of $\ball_{-1}$;
\item $\omega^*$ is $\omega$ chopped from the first entry of $\ball_{-1}$ to the last exit thereof;
\item $\omega^+$ is the rest of $\omega$ (i.e., from the last exit of $\ball_{-1}$ until the very end),
\end{itemize}
one has
\begin{itemize}
\item $\omega^-\subset \ball_0\cup B(x,1/50)$;
\item $\omega^* \subset \ball_{-0.5}$;
\item $\omega^+\subset \ball_0\cup B(y,1/50)$.
\end{itemize}}
\end{prop}
{Observe that if $\omega$ is good, then all cut points of $\omega$ in $\ball_{-1}$ are global cut points of $\gamma^1\oplus\omega\oplus\gamma^2$ for \emph{any} well-separated $(\gamma^1,\gamma^2)\in\newstatetwo_0$.}

%
\begin{proof}
We follow the same strategy as the proof of \cite[Proposition 2.2]{Law1996}, constructing a measure that satisfies the requirement of Lemma \ref{lem:Frostman}. The crux of the argument is to derive a lower bound on the one-point estimate and an upper bound on two-point estimate, i.e.,\ analogs of (4) and (6) of \cite{Law1996}.

We now fix $x,y\in\partial\ball_0$ such that $|x-y|\geq 1/10$. Note that none of the constants below depend on $x,y$. Recall the definition of the set of paths $C$ from Lemma \ref{dec8.lemma1}.  For $r<0$, write 
\begin{equation}\label{eq:Azrdef}
A(z,r):=\{ \omega \cap \ball_r(z)\neq \eset\}\cap \{\omega_1\cap\omega_3=\eset\}\cap \{\omega_2 \in C \}
\end{equation}
where we decompose $\omega=\omega_1\oplus\omega_2\oplus\omega_3$ as follows:
\begin{itemize}
\item $\omega_1$ is $\omega$ stopped at the first hitting of $\partial \ball_r(z)$;
\item $\omega_3$ is $\omega$ from the last hitting of $\partial\ball_r(z)$ until the end;
\item $\omega_2$ is the part of $\omega$ in between.
\end{itemize}

Let {$Z_r= \{x\in e^{-r}\mathbb{Z}^d : \ball_{-r}(z)\subset\ball_{-1}\}$}. 
We now define a random measure
$$
\nu_r(\omega):= e^{\delta r} \sum_{z\in Z_r} 1_{ A(z,r)}(\omega)U_{z,r} 
$$
where $U_{z,r}$ is the Lebesgue measure supported on $\ball_{r}(z)$  with total mass $1$. {We now claim that it suffices to show the following one- and two-point estimates:}

\begin{itemize}
\item There exists $c>0$ such that
\begin{equation}\label{eq:4im}
\mu_{x,y} \big[ \#\{z \in Z_r ;  A(z,r)\mbox{ holds}\} \big] \geq c e^{-\delta r}.
\end{equation}
This is the analogue of \cite[equation (4)]{Law1996}.

\item There exists $c'<\infty$ such that
\begin{equation}\label{eq:6im}
\mu_{x,y} \Big[ \big( \#\{z \in Z_r ;  A(z,r) \mbox{ holds}\} \big)^2 \Big]\leq c' e^{-2\delta r}. 
\end{equation}
This is the analogue of \cite[equation (6)]{Law1996}.

\end{itemize}
{By an argument similar to the proof of \cite[Proposition 2.2]{Law1996} (see below \cite[equation (7)]{Law1996}) \eqref{eq:4im} and \eqref{eq:6im} imply that there is an event of positive $\mu$-measure on which any subsequential weak limit of $\nu_r$ has finite $(\delta-\epsilon)$-energy (see \eqref{eq:Frostman} for the definition) for some $\epsilon>0$.}

Let $\nu$ denote {one of} the subsequential limits defined above. It is not difficult to see that on that event, $\nu$ is supported on the set of cut points of $\omega$ in $\ball_{-1}$ and admits no mass on $\omega$'s that are not good.

To prove \eqref{eq:6im}, it suffices to show that there exists $c''<\infty$ such that for all $z_1, z_2 \in Z_r$,
\begin{equation}\label{eq:5im}
\mu_{x,y} \big[ A(z_1,r) \cap A(z_2,r) \big] \leq c''  |z_1-z_2|^{-\eta} e^{-2r \eta}.
\end{equation}
This is the analogue of \cite[equation (5)]{Law1996}.

To see that \eqref{eq:4im} holds, it suffices to show that for any $z\in Z_r$, 
$$
\mu_{x,y}[ A(z,r)] \geq c e^{r \eta}
$$
where the constant $c$ does not depend on the choice of $x$, $y$, or $z$. Similarly to how we argue the lower bound in Proposition \ref{prop:cp0bound}, 
the probability that $\omega_1$ and $\omega_3$ arrive at $\ball_{r}(z)$ without intersection and well-separated at both ends is bounded from below by a constant multiple of $e^{r \eta}$ and the measure of $\omega_2$ such that $ A(z,r)$ holds given that $\omega_1$ and $\omega_3$ are well-separated is uniformly bounded from below by a universal constant. 

The universality of the constant $c$ comes from the following observation: decomposing $\omega_1$ and $[\omega_3]^R$ at first entry of $\ball_{-0.5}(z)$,  then the probability of the inner parts to hit $\ball_{r}(z)$ non-intersecting and well-separated \`a la the separation-at-both-ends lemma is bounded below by $c_0e^{r \eta}$ where $c_0$ is universal given that the starting points are well-separated, and in attaching the outer parts the total measure of paths that do not destroy the non-intersection is bounded below by a universal constant (note that $\ball_0$ and $\ball_{-0.5}(z)$ are ``on the same scale''), thanks to the separation at the beginning for the inner parts. 

We now turn to \eqref{eq:5im}. Similar to the definition of $\newset$ and $\newset'$, a priori we need to decompose $\omega$ in two ways, but thanks to the symmetry it suffices to consider one:
consider the decomposition of $\omega$ as $\omega=g_1\oplus o_1 \oplus g^* \oplus o_2 \oplus [g_2]^R$ similarly as in \eqref{eq:gammadec} but with $S$ and $S'$ replaced by $\partial \ball_{r}(z_1)$ and $\partial \ball_{r}(z_2)$, respectively.
Define a new measure $\wt\mu_{x,y}$ on the quintuple $(g_1,o_1,g^*,o_2,g_2)$ as well as the induced measure on $\Gamma_{x,y}$. 
By Remark \ref{rem:dom}, to obtain the upper bound in \eqref{eq:5im}, it suffices to obtain an upper bound on
$
\wt\mu_{x,y} [A(z_1,r) \cap A(z_2,r) ],
$
which follows from an argument similar to that of \eqref{eq:cp0bound}, with the extra step of patching up $o_1$ and $o_2$, where we see by the definition of $A(\cdot,r)$ in \eqref{eq:Azrdef} that $o_1$ and $o_2$ must abide the event $C$ defined in Lemma \ref{dec8.lemma1}, hence by the same lemma their total measure is bounded above by a universal constant, regardless of their starting and ending points. This finishes the proof of the proposition.
\end{proof}

\subsection{Proof of Theorem \ref{groundhog}}   
Recall all notations from Section \ref{sec:uptoc} where we introduced the decomposition of paths and the new measure $\wt\mu$. As we are going to see in a moment, the crux of the proof is to derive precise asymptotics for the total mass of $\wh\newset$ under $\mu$. However, as a first step, we need to give rough up-to-constant asymptotics, as well as assert that exceptional paths 
are truly exceptional in that their total mass is exponentially smaller than that of $\wh\newset$. We will phrase and prove these results under $\wt\mu$, which is easier to work with.

\begin{lemma}  \label{lemma1}  There exist $0<c\leq c'<\infty$ (which may depend on $V$ but are otherwise independent of $z$ and $w$) such that
\begin{equation}  \label{jan26.1}
c \, e^{-(b+6k) \eta} \leq \wt\mu[{\cU}]
   \leq c' \,  e^{-(b+6k) \eta}.
   \end{equation}
 Moreover, there exists $u > 0$ such that the $\wt\mu$-measure of
quintuples of paths $(\gamma^1,\omega_1,\gamma^*,\omega_2,\gamma^2) \in {\cU}$ for which at least one of the following four events occur
is less than $c \, e^{-(b+6k)\eta - k/2}$: 
\begin{equation}\label{eq:exception}
\gamma^1 \cap \ball_{-(b+k)}(w) \neq \eset\mbox{, } \gamma^2 \cap \ball_{-(b+k)}(z) \neq \eset\mbox{, } \omega_1 \not\subset \ball_{-(b+ 2k)}(z),\mbox{ and }
\omega_2 \not\subset\ball_{-(b+ 2k)}(w).
\end{equation}
\end{lemma}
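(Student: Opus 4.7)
\textbf{Proof plan for Lemma \ref{lemma1}.} The plan is to factorize $\wt\mu[\wh\newset]$ according to the three-step construction of $\wt\mu$ and estimate each factor using the invariant-measure theory of Section \ref{sec:3} applied locally near each of the balls $B_1, B_2$. Schematically,
\[\wt\mu[\wh\newset] \;=\; \int \Psi_1(\gamma^1,\gamma^*)\,\Psi_2(\gamma^*,\gamma^2)\, \wt\mu_{\triangle}\bigl(d(\gamma^1,\gamma^*,\gamma^2)\bigr),\]
where $\wt\mu_\triangle$ is the measure on non-intersecting triples produced by the first two bullets of the construction of $\wt\mu$ (with the restriction (\ref{feb4.1}) imposed), and $\Psi_i$ denotes the $\mu_{\cdot,\cdot}$-mass of bridges $\omega^i$ that yield a cut point of $\gamma$ inside $B_i'$ (together with the local Beurling-type restriction that enters the definition of $\wh\newset$ when $d=2$).

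For the bridge factors $\Psi_i$, the plan is to apply Proposition \ref{prop101}. By Lemma \ref{dec8.lemma2}, the event ``$\gamma$ has a cut point in $B_1'$'' and the local event ``the concatenation of $\omega^1$ with the truncation of $(\gamma^1,\gamma^*)$ at first entry to $\partial\ball_{-(b+3k/2)}(z)$ has a cut point in $B_1'$'' differ by a $\mu_{x,x'}$-set of mass at most $c\,e^{-k(d-1)/4}$, cf.\ (\ref{eq:localcp}); the analogous statement holds near $w$. Proposition \ref{prop102}, applied to the rescaled induced measure on $\state_{k/2}$, brings the conditional law of the truncated pair within $O(e^{-uk})$ total variation of the invariant measure $\invprob_{k/2}$, after which Proposition \ref{prop101} yields
\[\Psi_i \;=\; c_*\, e^{-a_i \eta}\bigl(1 + O(e^{-uk})\bigr) + O(e^{-k(d-1)/4}), \qquad i=1,2,\]
uniformly over triples whose endpoints on $S_i$ are well separated; since $a_i \in [1,2]$ the leading factor is bounded above and below by positive constants.

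For the triple mass $\|\wt\mu_\triangle\|$, the three independent ingredients (Brownian motions from $0$ and $\e$ to $S_1$ and $S_2$, and an excursion from $S_1$ to $S_2$ in $D$) produce a $V$-dependent ``geometric'' prefactor, while the non-intersection~(\ref{feb4.1}) contributes an additional factor $\asymp e^{-3k\xi}$ near each of $B_1, B_2$ by the intersection-exponent estimate stated at the start of Section \ref{sec:3}. Using $\eta = \xi + (d-2)$ to collect powers gives $\|\wt\mu_\triangle\| \asymp e^{-(b+6k)\eta}$, which combined with $\Psi_i \asymp 1$ proves (\ref{jan26.1}).

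For the bound on the exceptional events in (\ref{eq:exception}), each event adds an extra factor of at most $e^{-k/2}$ to the preceding analysis. For $\omega^i \not\subset \ball_{-(b+2k)}(\cdot)$, the Beurling projection estimate in $d=2$ (or the elementary hitting estimate in $d=3$) applied to $\mu_{x,x'}$-bridges from $S_i$ to $S_i$ that leave the concentric ball of radius $e^k$ times that of $B_i$ yields this factor, with the cut-point restriction costing only a constant. For $\gamma^1\cap\ball_{-(b+k)}(w)\neq\eset$ and the symmetric event with $\gamma^2$, the detour forces an additional non-intersecting crossing across an annulus of scale $e^k$ at the distant ball, producing a factor $e^{-k\xi}\leq e^{-k/2}$ since $\xi>1/2$ for $d=2,3$. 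The main obstacle I anticipate is ensuring that Propositions \ref{prop101}--\ref{prop102} can be applied independently near $B_1$ and $B_2$ with non-interacting total-variation errors; this local decoupling only holds once the exceptional events of (\ref{eq:exception}) have been ruled out, which is why the main estimate (\ref{jan26.1}) and the exceptional-event bound must be established together by a single bootstrap.
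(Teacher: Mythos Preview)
Your plan is more elaborate than what the paper actually does, and in one place the reasoning is not quite right.

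For the main estimate \eqref{jan26.1}, the paper does not invoke Propositions \ref{prop101}--\ref{prop102} at all; those are reserved for the sharp asymptotic in Proposition \ref{prop4.2}. Here only up-to-constants bounds are needed, and the paper obtains them by a direct two-step argument: first bring the two Brownian motions from $0$ and $\e$ to the ball of radius $e^{-b}$ about the midpoint $\wh z$ of $z$ and $w$ without intersection (cost $\asymp e^{-b\eta}$, with the $V$-dependence absorbed here); then, separately near $z$ and near $w$, let the pair (incoming path, excursion tip) cross an annulus of ratio $e^{3k}$ without intersection (cost $\asymp e^{-3k\eta}$ each). The bridge factors $\Psi_i$ you isolate are bounded above and below by constants since $a_i\in[1,2]$, and this follows already from the separation lemmas in the bulleted list of Section \ref{sec:3}; the invariant measure is not needed. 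Your factorization would also work, but it introduces exactly the circularity you flag at the end, which the paper avoids by never needing decoupling at this stage.

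For the exceptional events, your bound for $\omega^i\not\subset\ball_{-(b+2k)}(\cdot)$ is the same as the paper's. But your argument for $\gamma^1\cap\ball_{-(b+k)}(w)\neq\eset$ (and its mirror) is not correct as written: the claim that the extra crossing costs a factor $e^{-k\xi}$ is not justified, since the situation is not two independent Brownian motions avoiding each other but rather $\gamma^1$ avoiding an already-present obstacle (a piece of $\gamma^*$, or of $\gamma^2$) in the annulus around $w$. The paper's argument is simpler and dimension-split: for $d=3$ the probability that $\gamma^1$ even hits $\ball_{-(b+k)}(w)$ is $O(e^{-k})$ by transience; for $d=2$ the Beurling estimate gives $O(e^{-k/2})$ for $\gamma^1$ to cross the annulus around $w$ without hitting the $\gamma^*$-arm that is already there. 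Either way the extra factor is at most $e^{-k/2}$, and no multi-arm exponent is needed. With this simpler argument in place, the ``bootstrap'' you worry about disappears: the main bound and the exceptional-event bound are established independently.
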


\begin{proof}
The proof of \eqref{jan26.1} mostly follows from the same strategy as in Proposition \ref{prop:cp0bound}, with the following modifications.
\begin{itemize}
\item In the upper bound, we still need to patch up $\omega_1$ and $\omega_2$  after sampling the triple $(\gamma^1,\gamma^*,\gamma^2)$. Note that in this process by Lemma \ref{dec8.lemma1} for $d=2$ and the argument for uniform finiteness of $\Phi_a$ in the proof of Proposition \ref{prop101} for the case of $d=3$,  the total measure of $\omega_1$ and $\omega_2$ such that $\gamma\in \wh{\cal V}$ is uniformly bounded regardless of $\gamma^1$, $\gamma^*$, and $\gamma^2$.

\item In the lower bound, note that by the well-separatedness of $\gamma^1_{\rm II}$ and $\gamma^\circ_{\rm z}$ near $x,x'\in S$  as well as that of  $\gamma^\circ_{\rm w}$ and $\gamma^2_{\rm II}$ near $y',y\in S'$, we can patch up $\omega_1$ and $\omega_2$ as in Proposition \ref{prop:omegapatch} to ensure $\gamma\in \wh{\cal V}$ and to guarantee the positivity of $c$ from \eqref{jan26.1}.
\end{itemize}

To show the second claim we perform the same decomposition as above. For the first inequality, observe that 
\begin{itemize}
\item $\gamma^1_{\rm O}\oplus\gamma^1_{\rm M}$ cannot enter $\ball_{-(b+k)}(w)$, hence it suffices to consider $\gamma^1_{\rm I}$;
\item if $d = 3$ the measure of paths $\gamma^1_{\rm I}$ that hit
$\ball_{-(b+k)}(w)$ is comparable to $e^{-k}$, and if
$d=2$, by a Beurling estimate, the measure of paths reaching $\ball_{-(b+k)}(w)$  without intersecting $\gamma^*_{\rm w}$ is comparable to $e^{-k/2}$.
\end{itemize} 
 If $\gamma^1_{\rm I}\cap \ball_{-(b+k)}(w)\neq \emptyset$, then write $\gamma^1_{\rm tail}$ for the first entry of $\gamma^1_{\rm I}$ into $\ball_{-b-1}(z)$ after hitting $ \ball_{-(b+k)}(w)$. Then the first inequality follows from an argument similar to  the upper bound in \eqref{jan26.1} where we replace $\gamma^1_{\rm I}$ by $\gamma^1_{\rm tail}$. A similar bound works for the second inequality. 
 
 The third and fourth inequalities follow from Lemma \ref{dec8.lemma2}. The factor $e^{-k/2}$ can be changed to $e^{-k}$ if $d=3$ but we do not need the
stronger estimate.
\end{proof}

We now decompose the paths in another manner as follows:
 \[   \gamma^1 =  \wh \gamma^1 \oplus \wt \gamma^1, \;\;\;\;
   \gamma^2 = \wh \gamma^2 \oplus \wt \gamma^2, \;\;\;\;
     \gamma^* =  [\gamma_-^*]^R \oplus \wh \gamma^*
      \oplus  \gamma_+^* , \]
 where (the text in brackets refers to its illustration in Figure \ref{fig2})
 \begin{itemize}
 \item $\wh \gamma^1$ (blue) is $\gamma^1$ stopped at the first visit to
 $\partial \ball _{-(b+k)}(z)$; 
 \item  $\wt \gamma^1$ (red, normal+broad brush) is $\gamma^1$ started at the first visit to
 $\partial \ball _{-(b+k)}(z)$;
 \item $\wh \gamma^2$  (blue) is  $ \gamma^2$
stopped at the first visit to
 $\partial \ball _{-(b+k)}(w)$;
 \item  $\wt \gamma^2$ (red, normal+broad brush) is   $ \gamma^2$
 started at the first visit to
 $\partial \ball _{-(b+k)}(w)$;
 \item  $ [\gamma_-^*]^R$ (red, normal+{\bf broad} brush) is $\gamma^*$ stopped at the last visit to $\partial \ball _{-(b+k)}(z)$ before the first visit to $\partial \ball _{-(b+k)}(w)$, or, in other words,   $\gamma_-^*$ is $[\gamma^*]^\rev$ started at the first visit to $\partial \ball _{-(b+k)}(z)$ after the last visit to $\partial \ball _{-(b+k)}(w)$;
 \item $\gamma^*_+$ (red, normal+broad brush) is  $\gamma^* $ started at the first visit to
 $\partial \ball _{-(b+k)}(w)$ (note that the definition of $\gamma^*_+$ and $\gamma^*_-$ is not symmetric); 
 \item $\wh \gamma^*$ (blue) is an excursion from $\partial \ball _{-(b+k)}(z)$
 to $\partial \ball _{-(b+k)}(w)$ in $V$, the unbounded domain  whose
 boundary is 
 $\partial \ball _{-(b+k)}(z)  \cup \partial \ball _{-(b+k)}(w)$.
 \end{itemize} 
 By an argument similar to the proof of \eqref{eq:exception}, one sees that for some $C>0$ that only depends on $V$,
\begin{equation}\label{eq:lastrestriction}
\wt\mu
\left[ {\cU} \cap \{\gamma^*_+\cap \ball_{-(b+k)}(z)\neq\eset\}\right] \leq C e^{-(b+3k)\eta-k/2}.
\end{equation}

\begin{figure}[h]
\centering
\includegraphics[scale=0.88]{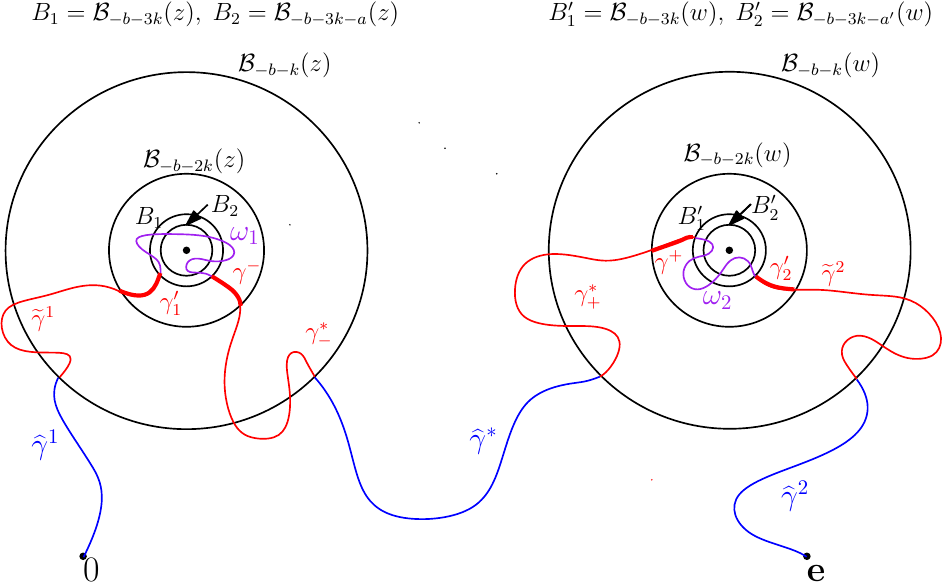}
\caption{\small{A typical sample from $\wt\mu$ restricted to ${\cU}$. Note that except on a set with very small measure, $\omega_1$ and $\omega_2$ stay in ${\cal B}_{-b-2k}(z)$ and ${\cal B}_{-b-2k}(w)$, respectively. As in the proof of Theorem \ref{theorem.dec16.2}, this implies that the cut points of $\gamma_1'\oplus\omega_1\oplus[\gamma^-]^\rev$ and $\gamma^+\oplus\omega_2\oplus[\gamma'_2]^\rev$ are also global cut points of $\gamma$. This allows us to compare the measure in (\ref{eq:2pt}) with two independent copies of the invariant measure for nonintersecting Brownian motions.}
}
\label{fig2}
\end{figure} 

We let $ \bar \gamma_z  =(\gamma_1',\gamma^-)$  be the pair of paths (both in broad red brush in Figure \ref{fig2}) obtained by discarding the part of $\wt \gamma^1$ and $\wt \gamma^*_-$ before their respective first visit to $\partial \ball_{-(b+2k)}(z)$ and define $\bar \gamma_w=(\gamma^+,\gamma_2')$ similarly. 


We now state and prove the two-point coupling result, which, along with Propositions \ref{prop101} and \ref{prop102}, proves (\ref{eq:2pt}) below and in turn (\ref{eq:T2c1}).
Let $\overline{\invprob}_k$ denote the probability measure on $(\bar \gamma_z, \bar \gamma_w)$  induced by $\wt\mu$ restricted to the event $N:=N_0 \cap N_1 \cap N_2\cap N_3$
 where $N_0$ is defined in \eqref{eq:feb4.1} while
 $N_1:=\{ \gamma_1\cap \ball_{-(b+k)}(w)=\eset\}$, $N_2:= \{\gamma_2\cap \ball_{-(b+k)}(z)=\eset\}$, and $N_3:=\{\gamma^*_+ \cap \ball_{-(b+k)}(z)=\eset\}$  are the events that appeared in \eqref{eq:exception} and \eqref{eq:lastrestriction}. Slightly abusing notation we also regard $\overline{\invprob}_k$ as a measure on a pair of properly rescaled and translated paths so that they fit in the setting of Propositions \ref{prop101} and \ref{prop102}.
 
 \begin{proposition}\label{prop4.2}   The total variation distance between
 $\overline{\invprob}_k$ and $\invprob_k \times \invprob_k$ (see the paragraph below (\ref{eq:cstardef}) for definition) is less than $c e^{-uk}$ where  $u$ is universal and $c$ may depend on $V$.
 \end{proposition}

\begin{proof}  
We write $\cP$ for $\cP_0$ restricted to event $N$. Similar to Proposition \ref{prop:cp0bound}, we have the following up-to-constant estimate
\begin{equation}\label{eq:cpbound}
c^{-1} e^{-(b+6k)\eta}\leq \|\cP\|\leq c e^{-(b+6k)\eta}.
\end{equation}
By an argument similar to the proof of \eqref{eq:exception}, 
we have the following control: 
$$
\cP[ N_4^c ]=O_V(e^{-uk})e^{-(b+6k)\eta},
$$
where
$$
N_4:=   \{   \wt \gamma^1 \cup \gamma_-^* \subset  \ball_{-(b+1)}(z)\}\cap 
   \{ \wt \gamma^2 \cup \gamma_+^* \subset  \ball_{-(b+1)}(w) \}.
$$
Observe that $\cP$ normalized to be a probability measure is equal to the measure $\overline{\invprob}_k$ in the statement of the proposition.
Therefore, denoting by $\overline{\invprob}'_k$ the law of $(\bar\gamma_z,\bar\gamma_w)$ induced by $\cP$ restricted to $N_4$, it follows that
$$
d_{\op{TV}}\left(\overline{\invprob}_k,\overline{\invprob}'_k\right)=O_V(e^{-uk}).
$$
Finally (regarding $\overline{\invprob}'_k$ as a measure on a pair of rescaled and translated paths similarly as how we regard $\overline{\invprob}_k$), we claim that it is possible to couple  
$\overline{\invprob}'_k$ with two independent 
	versions of the ``one-point''
invariant measure for non-intersecting Brownian motions from Section \ref{sec:3}, which yields
\begin{equation}\label{eq:twocoupling}
d_{\op{TV}}\left(\overline{\invprob}'_k,\invprob_k \times \invprob_k
\right)=O_V(e^{-uk}).
\end{equation}
To see this, we observe that from the restriction and conditioning above, if we condition on $(\wh\gamma^1,\wh\gamma^*,\wh\gamma^2)$ the law of $(\wt\gamma^1,\gamma^*_-)$ is a pair of Brownian motion conditioned on the event that
$$
\big(V_1\cup\wt\gamma^1\big)\cap \big(V_0\cup V_2\cup \gamma^*_-\big)=\big(V_2\cup\gamma^*_-\big) \cap \big(V_0\cup V_1\cup \wt\gamma^1\big)=\eset,
$$
where $V_0=\{z\in\mathbb{R}^d; |z|\geq e^{-(b+1)}\}$, $V_1=\ball_{-(b+1)}(z)\cap \wh\gamma^1$, and $V_2=\ball_{-(b+1)}(z)\cap \big(\wh\gamma^*\cup\wh\gamma^2\big)$, which is independent of $(\gamma^*_+,\wt\gamma^2)$. A similar observation follows for $(\gamma^*_+,\wt\gamma^2)$.
This allows us to independently apply Proposition \ref{prop102} twice to obtain \eqref{eq:twocoupling}, thus finishing the proof.
\end{proof}
\begin{proof}[Proof of Theorem \ref{groundhog}]
We start with \eqref{eq:T2c1}. First we claim that it suffices to prove that  there exists $m>0$ such that for all $k\geq m$, $1\leq a,a',a'',a'''\leq 2$,
\begin{equation}\label{eq:2pt}
\mu\left[J_{b+3k+a}(z)J_{b+3k+a'}(w)\right]\simeq \mu\left[J_{b+3k+a''}(z)J_{b+3k+a'''}(w)\right],
\end{equation}
where we write $A\simeq B$ for $A=B\big[1+O_V(e^{-uk})\big]$ for some  universal $u>0$ and the constant in $O_V(\cdot)$ may depend on $V$ but not on other variables. The claim follows since we obtain \eqref{eq:T2c1} by a summing argument similar that for \eqref{eq:T1c1} in Section \ref{sec:proofmaintheorem}.

Recall the definition of $\newset$ and $\newset'$ {in Section \ref{sec:uptoc}} and the definition of $\Theta_k$ at the beginning of Section \ref{sec:proofmaintheorem} and define $\Theta_k'$ similarly, replacing $z$ by $w$. We then write $\Upsilon_{a,a'}:=\Theta_{b+3k+a}\cap \Theta'_{b+3k+a'}$ for short.
We remark that the left side of \eqref{eq:2pt} can be rewritten as 
\begin{equation}\label{eq:JintoUpsilon}
\mu[J_{b+3k+a}(z)J_{b+3k+a'}(w)]=e^{(2b+6k+a+a')\eta}\mu[\Upsilon_{a,a'}].
\end{equation}

We now decompose $\Upsilon_{a,a'}$. Let  $\Xi,\Xi' \subset\Upsilon_{a,a'}$ be the paths that have a cut point in $B_2$ before having a cut point in $B'_2$ and vice versa, respectively. In other words, a path is contained in $\Xi$ if and only if there are times $t<s$ such that the path makes a cut point in $B_2$ at time $t$ and a cut point in $B'_2$ at time $s$. Note that $\Xi\cap\Xi'\neq\eset$ and $\Xi\cup\Xi'=\Upsilon_{a,a'}$.
Recall that $\newset \cup \newset'$ can be partitioned into $\newset \setminus \newset'$, $\newset \cap \newset'$, and $\newset' \setminus \newset$. 
By the definition of the various sets,
$$(\newset\setminus\newset')\cap \Upsilon_{a,a'}=(\newset\setminus\newset')\cap \Xi,\qquad (\newset'\setminus\newset)\cap \Upsilon_{a,a'}=(\newset'\setminus\newset)\cap \Xi'.$$ 
Furthermore,
\begin{equation}\label{eq:mubound}
\mu[(\newset\setminus\newset')\cap \Xi]+\mu[(\newset'\setminus\newset)\cap \Xi']\leq \mu[\Upsilon_{a,a'}] \leq \mu[\Xi]+\mu[\Xi'].
\end{equation}

We now turn to $\Xi$ (similar results follow for $\Xi'$ by symmetry). It follows from the definition of $\wh\newset$ and the dominance of $\wt\mu$ over $\mu$ on $\newset$ (see Remark \ref{rem:dom}) that
\begin{equation}\label{eq:Xi1}
\mu[(\newset\cap \newset')\cap \Xi] \leq \wt\mu[(\newset\cap \newset')\cap \Xi]\leq \wt\mu[{\cU}\cap E],
\end{equation}
where $E$ is the union of the four events in \eqref{eq:exception} {and the event on the left side of \eqref{eq:lastrestriction}}. For the second inequality, note that although an element in $(\newset\cap \newset')\cap \Xi$ may have more than one decomposition, any of the decompositions must be in ${\cU}\cap E$. On the one hand, by Lemma \ref{lemma1}, we have 
that for some universal $u>0$,
\begin{equation}\label{eq:Xi2}
\wt\mu[{\cU}\cap E]=O(e^{-uk})\wt\mu[{\cU}].
\end{equation}
On the other hand, by the definition of $E$, $\cU$, and Remark \ref{rem:dom}, we have
\begin{equation}\label{eq:Xi3}
\wt\mu[{\cU}\setminus E]\leq \wt\mu[(\newset\setminus\newset')\cap\Xi]=\mu[(\newset\setminus\newset')\cap\Xi]\leq \wt\mu[{\cU}].
\end{equation}
By \eqref{eq:Xi1}, \eqref{eq:Xi2} and \eqref{eq:Xi3}, we obtain
\begin{equation}\label{eq:cUbound}
\mu[(\newset\setminus\newset')\cap\Xi]\simeq  \wt\mu[{\cU}]\simeq \mu[\Xi].
\end{equation}
Combining this estimate and \eqref{eq:mubound} and using symmetry in $z$ and $w$ we get
\eqbn
\mu[\Xi]+\mu[\Xi'] \simeq \mu[\Upsilon_{a,a'}].
\eqen
Also using \eqref{eq:JintoUpsilon} and symmetry in $z$ and $w$ again, we see that to prove \eqref{eq:2pt}, it suffices to investigate the asymptotic behavior of $\mu[\Xi]$, or equivalently that of $\wt\mu[\cU]$, thanks to \eqref{eq:cUbound}. Also using that $\|\cP\|$ is independent of $a$ and $a'$, we see that it suffices to prove
\begin{equation}\label{eq:2ptasympt}
\wt\mu\left[{\cU}\right]
  \simeq  c_*^2 e^{-(a+a')\eta}\|\cP\|
\end{equation} 
where $c_*$ comes from \eqref{eq101} and $\cP$ from the proof of Proposition \ref{prop4.2}.

We now prove \eqref{eq:2ptasympt}.
Let
$$
M_1:=\{\mbox{$\omega_1$ has a cut point of $\gamma'_1\oplus \omega_1\oplus [\gamma^-]^R$ in $B_2$}\}\cap\{\omega_1 \in \ball_{-(b+2k)}(z)\}
$$
and
$$
M_2:=\{\mbox{$\omega_2$ has a cut point of $\gamma^+\oplus \omega_2\oplus [\gamma'_2]^R$ in $B'_2$}\}\cap\{\omega_2 \in \ball_{-(b+2k)}(w)\}.
$$

By the definition of $E$, 
\begin{equation}\label{eq:NM1M2}
\wt\mu\left[{\cU}\setminus E\right]\leq\wt\mu[  N \cap M_1 \cap M_2]\leq \wt\mu[\cU].
\end{equation}
Noting that by \eqref{eq:Xi2},  $\wt\mu\left[{\cU}\right]\simeq \wt\mu\left[{\cU}\setminus E\right],$ we see that \eqref{eq:NM1M2} implies
$$
\wt\mu[\cU]\simeq\wt\mu[  N \cap M_1 \cap M_2].
$$
By the definition of $\cP$ in Proposition \ref{prop4.2},
$$
 \wt\mu[  N \cap M_1 \cap M_2   ]= \cP\Big[\mu_{x,y}[M_1]\mu_{x',y'
 }[M_2]\Big].
$$
Observe that under $\cP$, $M_1$ and $M_2$ depend on $\bar\gamma_z$ and $\bar\gamma_w$ only. Recall the definition of $\Phi$ and $\wh\Phi$ in \eqref{eq:Phidef}, and the definition of $\overline{\invprob}_k$  before Proposition \ref{prop4.2} (note that $\overline{\invprob}_k=\cP/\|\cP\|$) we see that
$$
\cP\Big[\mu_{x,y}[M_1]\mu_{x',y' }[M_2]\Big]=\overline{\invprob}_k\Big[\wh\Phi_{a,k}(\bar\gamma_z)  \wh\Phi_{a',k}(\bar\gamma_w)\Big]\|\cP\|.
$$
Similar to \eqref{eq:localcp} and \eqref{eq:localcpE}, again by Lemma \ref{dec8.lemma2}, we have
$$
\overline{\invprob}_k\Big[\wh\Phi_{a,k}(\bar\gamma_z)  \wh\Phi_{a',k}(\bar\gamma_w)\Big]\simeq \overline{\invprob}_k\Big[\Phi_{a}(\bar\gamma_z)  \Phi_{a'}(\bar\gamma_w)\Big].
$$

Finally, by Propositions \ref{prop4.2} and \ref{prop101}, it follows that 
$$
\overline{\invprob}_k\Big[\Phi_{a}(\bar\gamma_z)  \Phi_{a'}(\bar\gamma_w)\Big] \simeq {\invprob}_k[\Phi_a]{\invprob}_k[\Phi_{a'}]\simeq c_*^2 e^{-(a+a')\eta}.
$$
This confirms \eqref{eq:2pt} as well as \eqref{eq:T2c1}.

To prove the second claim (\ref{eq:T2c2}) 
 it suffices to show that uniformly in $s$,
\begin{equation}\label{eq4.6}
c_V  |z-w|^{-\eta }\leq\mu\Big[J_s(z)J_{s+\rho}(w) \Big] \leq C_V  |z-w|^{-\eta }.
\end{equation} 
By \eqref{eq:JintoUpsilon}, \eqref{eq:mubound} and \eqref{eq:cUbound}, the inequality \eqref{eq4.6} readily follows from the asymptotics of $\mu[\Xi]$ and $\mu[\Xi']$, which ultimately comes from \eqref{jan26.1} along with a similar bound with $z$ and $w$ interchanged.
\end{proof}

\subsection{Existence of Minkowski content}  \label{gensec}
In this subsection, we first give a general proof of the existence of Minkowski content 
given the sharp Green's function estimates and some mild assumptions. Then we apply this general result to Brownian cut points and show Theorem \ref{theorem.dec16}.

Suppose $\mu$ is a $\sigma$-finite measure on compact subsets $\A$ of $\R^d$ and that $K\subset\R^d$
is a compact set (in our case $K = \{0,\e\}$). 
  We will
consider conditions under which there exists
a Borel measure $\nu = \nu_\A$ such that
$\nu(K) = 0$ and 
for all dyadic cubes $V$ with $V \cap K = \eset$,
\[      \nu(V) = \Cont_\delta(V \cap \A).\]
Here, $0 < \delta < d$ and we write
$\eta = d - \delta$ (as in earlier sections).
 We write $\mu[X]$ for the integral
of $X$ with respect to $\mu$. 

 \begin{proposition}\label{prop:generalMC}
Suppose $\mu$ is a $\sigma$-finite measure on compact subsets $\A$ of $\R^d$ and that $K\subset\R^d$ is compact.  Suppose that for every {bounded Borel} set $V\subset\R^d$ with $V \cap K = \eset$, we have
\begin{equation}\label{eq4.3}       
\mu\left[\{\A: \A \cap V \neq \eset \} \right]< \infty . 
\end{equation}  
Suppose $0 < \delta = d-\eta < d$.  Suppose $U,V$ are {bounded Borel} subsets of $\R^d$
disjoint from $K$ with $V \subset {\rm int}(U)$ and
 such that for some $\epsilon > 0$,
\begin{equation}\label{eq:boundarycond}
\Cont_{d- \epsilon}(\partial V)  = 0 .
\end{equation}   
Let $I_s(z) =  1_{\{\dist(z,\A)  \leq e^{-s} \}},$
$ J_s(z) = e^{\eta s} \,I_s(z)$, and 
\[  J_{s,V} = \int_V J_s(z) \, dz.\]
Suppose   the following holds for $z \neq w \in U$.
\begin{itemize}
\item The limits
\[  G(z) = \lim_{s \rightarrow \infty} \mu\left[J_s(z) 
\right], \mbox{ and }\]
\[  G(z,w) = \lim_{s \rightarrow \infty}
    \mu \left[J_s(z) \, J_s(w)\right], \]
exist and are finite.  Moreover, 
$G$ is uniformly bounded on $U$. 
\item  There exist  $c, \rho_0,u >0$ such that
 for $ 0 \leq \rho \leq \rho_0$,
\begin{equation}
 \label{dec15.1}
 \begin{split}
 &G(z) \leq c; \;\; G(z,w) \leq  c \, |z-w|^{-\eta};
   \;\;   \left| \mu[J_s(z)] - G(z)\right| \leq c \, e^{-us};\\
 &\left|\mu[J_s(z) \, J_{s+\rho}(w)] -
 G(z,w)\right| \leq  c \, |z-w|^{-\eta} \, e^{-us}.
 \end{split}
  \end{equation}
 \end{itemize}
 Then the finite limit
$ J_V =  \lim_{s \rightarrow \infty} J_{s,V} $
exists both
almost $\mu$-everywhere and in $L^2$. {In the meanwhile,\begin{equation}\label{eq:boundaryV}
\Cont_{\delta}(\partial V\cap \A)=0\quad\mbox{ $\mu$-a.s.}
\end{equation}}
 Moreover, one has
\begin{equation}\label{eq:JV}
J_V = \Cont_\delta(V \cap \A),
\end{equation}
\begin{equation}    \mu\left[J_V \right] =  \lim_{s \rightarrow \infty}
   \mu\left[J_{s,V} \right] = \int_V G(z) \, dz, \mbox{ and}
\end{equation}
\begin{equation}    \mu\left[J_V^2 \right] =  \lim_{s \rightarrow \infty}
   \mu\left[J_{s,V}^2 \right] = \int_V  \int_V G(z,w) \, dz \, dw.
\end{equation}

 \end{proposition}
 
\begin{proof} We fix $U,V$ as in the statement of the proposition,  write $J_s = J_{s,V}$,
and allow constants to depend on $U,V$. The result
is the same if we restrict to $\wt \mu = \mu \, 1_{\{
\A \cap U \neq \eset\}}$, which is $\mu$-almost surely a finite measure by \eqref{eq4.3}. Hence we can
normalize this to make it a probability measure.  So without
loss of generality we will assume that $\mu$ is a probability
measure and use $\Prob,\E$ notation.  We
assume $  0 \leq \rho \leq \rho_0$.
Note that \eqref{dec15.1} implies that  
\begin{equation}
\left| \mu\left[(J_{s+\rho}(z) - J_s(z)) \, (J_{s+\rho}(w)
  - J_s(w)) \right] \right| \leq  c \, |z-w|^{-\eta} \, e^{-us}.
\end{equation}
Since
\begin{eqnarray*}
    \left(J_{s+\rho} - J_s\right)^2  
  & = & 
\int_V \int _V  [J_{s+\rho}(z) - J_z(s)] \,
  [J_{s+\rho}(w) - J_s(w)]\, dw\, dz ,
\end{eqnarray*} 
we see   that
\begin{equation}
\E\left[(J_{s+\rho} - J_s )^2  \right] \leq c  \, e^{-us} 
\int_V \int_V  |z-w|^{-\eta}\, dw \, dz \leq c \, e^{-us}.
\end{equation}    
Using this we can see that for every $0 < \rho \leq \rho_0$, the sequence
$J_{n\rho}$ converges in both $L^2$ and $\mu$-almost everywhere to a finite limit
$J_{V,\rho}$.  Also, since
\[                 e^{\rho \eta} J_{n\rho} \geq J_s \geq e^{-\rho \eta}
 \, J_{(n+1) \rho}, \;\;\;\; n \rho \leq s \leq (n+1) \rho , \]
we can let $\rho \rightarrow 0$ and prove that $J_s \rightarrow J_V$
almost everywhere and in $L^2$.  Since the convergence is in $L^2$, we have
\[    \E\left[J_V \right] =  \lim_{s \rightarrow \infty}
   \E\left[J_{s,V} \right] = \int_V G(z) \, dz,\mbox{ and}\]
   \[    \E\left[J_V^2 \right] =  \lim_{s \rightarrow \infty}
   \E\left[J_{s,V}^2 \right] = \int_V  \int_V G(z,w) \, dz \, dw.\]

{The claim \eqref{eq:boundaryV} easily follows by applying either identity above to $\partial V$.}

We now prove \eqref{eq:JV}. Recall that
\[   \Cont_{\delta}(\A \cap V;s) =  e^{\eta s} \, \Vol\{z: \dist(z,\A \cap V)
    \leq e^{-s} \}.\]
In particular,
\[   \left|J_s -  \Cont_{\delta}(\A \cap V;s)\right|
 \leq  \int_{\dist(z, \partial V) \leq e^{-s} }
J_s(z) \, dz.\]
Therefore for $s$ sufficiently large,
\begin{equation}  \E\left[
|J_s -  \Cont_{\delta}(\A \cap V;s)|\right]
  \leq c \, \Vol\{z: \dist(z, \partial V) 
   \leq e^{-s}\} \leq c'  e^{-\epsilon s} 
\end{equation}
where $\epsilon$ comes from (\ref{eq:boundarycond}),  and hence by Markov's inequality and the Borel-Cantelli lemma, for all sufficiently large $s$,
\begin{equation}    |J_s -  \Cont_{\delta}(\A \cap V;s)| 
  \leq c \, e^{-\epsilon s/2}.\end{equation}
Therefore,
\[  \Cont_\delta(\A \cap V)
 = \lim_{s\rightarrow \infty}
  \Cont_{\delta}(\A \cap V;s) = \lim_{s \rightarrow
  \infty} J_s = J_V.\]
The last two claims of the proposition are easy corollaries of $\mu$-a.e.\ and $L^2$-convergence and dominated convergence theorem where the bounds are provided by \eqref{dec15.1}.
\end{proof}

\begin{proof}[Proof of Theorem \ref{theorem.dec16}] {Applying Proposition \ref{prop:generalMC} to the setup here, we have already proved the theorem for $V$ such that its closure $\overline{V}$ is disjoint from $\{0,\e\}$.} Hence it suffices to deal with the case where $V$ intersects 0 and/or $\e$ and show that in this case ${\rm Cont}_\delta (V\cap {\cal A})$ {exists and} is $\mu$-a.s.\ finite. Without loss of generality we may assume ${V= B(0,e^{-1})}$. For $n\in\N$ define
$
A_n=\{e^{-n-1}< |z| \leq e^{-n} \}
$
and {
$$
	K_n={\rm Cont}_\delta(A_n \cap {\cal A}),
$$	
which exists $\mu$-a.s.\ thanks to Proposition \ref{prop:generalMC}. As the boundaries of the $A_n$'s are all $(d-1)$-dimensional and ${\rm Cont}_{\delta}(\{0\})=0$, we see that
\begin{equation}\label{eq:annulus0}
{\rm Cont}_\delta (V\cap {\cal A})=\sum_{n=1}^{\infty} K_n,
\end{equation}
which also exists $\mu$-a.s. 
Hence the only thing left is to prove is that the right side of \eqref{eq:annulus0} is $\mu$-a.s.\ finite.}
 We will argue the existence of a $q>0$ such that
	\eqb
	\mu(K_n^2) \leq q^{-1}e^{-qn}.
	\label{eq102}
	\eqe
	This is sufficient to conclude the proof since it gives by Chebyshev's inequality 
	that $\mu(K_n\geq e^{-qn/2})\leq q^{-1}e^{-qn/2}$ for all sufficiently small $n$, so the Borel-Cantelli lemma implies that $K_n<e^{-qn/2}$ a.s.\ for all sufficiently large $n$ and we get $\sum_{n\in\N} K_n<\infty$ $\mu$-a.s.
	
	It remains to prove \eqref{eq102}. We write $\mu_{0,\mathbf e}$ instead of $\mu$ in the rest of the proof to emphasize the dependence of the measure on the end-points. By \eqref{eq:scalingpty} and the scaling property of the Minkowski content,
	\eqb
	\mu_{0,\mathbf e}(K_n^2) 
	= e^{(d-2)n} e^{- 2\delta n}\mu_{0,e^n \mathbf e}(K_1^2).
	\label{eq50}
	\eqe
	By the Brownian path decompositions in Section \ref{brownsec}, with $D$ the unit {disk or ball},
	\eqbn
	\mu_{0,e^n \mathbf e} = \int_{\partial D} \mu_{0,z}\oplus \mu^{D^c}_{z,e^n \mathbf e}\,\sigma(dz).
	\eqen
	For $d=2$, {by \eqref{eq:harnackresults},
\begin{equation}\label{eq:C1}
||\mu^{D^c}_{z,e^n \mathbf e}||\leq C.
\end{equation}}For $d=3,$ {by \eqref{eq:poissonbound},
\begin{equation}\label{eq:C2}
||\mu^{D^c}_{z,e^n \mathbf e}||\leq C||\mu_{z,e^n \mathbf e}||,
\end{equation}
and the right side is bounded by $ q^{-1}e^{-(d-2)n}$ for some sufficiently small $q>0$ by \eqref{eq:scalingpty}.} Therefore, decreasing $q$ if necessary, in either dimension the total mass of $\mu^{D^c}_{z,e^n \mathbf e}$ is bounded by $q^{-1}e^{-(d-2)n}$. Using this, rotation invariance property of $\mu_{0,z}$ for $z\in\partial D$, and the fact that adding an excursion from $\partial D$ to $e^n\mathbf e$ to a path from $0$ to $\partial D$ can only decrease the number of cut points in $D$, 
	\eqbn
	\mu_{0,e^n \mathbf e}(K_1^2) \leq q^{-1} e^{-(d-2)n} \mu_{0,\mathbf e}(K_1^2).
	\eqen
	Combining \eqref{eq50} and this estimate we get
	\eqbn
	\mu_{0,\mathbf e}(K_n^2) 
	= e^{(d-2- 2\delta)n} \mu_{0,e^n \mathbf e}(K_1^2)
	= q^{-1} e^{-2\delta n} \mu_{0,\mathbf e}(K_1^2),
	\eqen
	which implies \eqref{eq102} after possibly decreasing $q$ since $\mu_{0,\mathbf e}(K_1^2)<\infty$ a.s.
\end{proof}

\subsection{Path measure restricted to a bounded domain}\label{sec:bounded}
In this subsection, we will state a version of the main results of this work for the Brownian measure restricted to a bounded domain. We will also discuss how to adapt the proof to this case.

Let $U$ be an open set containing $0,\e$. Although morally we can {directly} deal with a much wider class of domains, for the same reason as stated at the beginning of Section \ref{brownsec}, we only consider the case of $U$ being a ball {or a half-space.} 
{Later, we will use conformal invariance to deal with other domains in two dimension.} 

We write $\mu^U=\mu^{U}_{0,\e}$ for short. We still define the cut-point Green's functions by (provided the limits exist)
\begin{equation}\label{eq:G1defU}
\greencut_U(z)=\lim_{s \rightarrow \infty
} \, \mu^U[J_s(z)] 
\end{equation}
and \begin{equation}\label{eq:G2defU}
\greencut_U(z,w)=\lim_{s \rightarrow \infty
} \, \mu^U\left[J_s(z) \, J_s(w)\right].
\end{equation}

We state variants of Theorems \ref{theorem.dec16.2} -- \ref{theorem.dec16} for $\mu^U$, and briefly explain how to modify the original proof to obtain the results in this subsection.
 
\begin{thm}  \label{thm1.1U}
There exists $u >0$ such that  if $e^{-b}  =
\dist(0, z, \e,\partial U)>0$, then
the limit in \eqref{eq:G1defU} 
exists, and if $s \geq b+1 $,
\begin{equation}\label{eq:T1c1U}
\mu^U\left[J_s(z)\right] 
    = \greencut_U(z) 
     \, [1+O(e^{(b-s)u})].
\end{equation}
     \end{thm}
Note that in this case we can no longer obtain the explicit form of $\greencut_U$ due to loss of symmetry.

To see that this version of Theorem \ref{theorem.dec16.2} (except \eqref{eq:T1c2}) still holds, we replace $\Gamma_{0,\e}$ by $\Gamma^{U}_{0,\e}$, $\mathbb{R}^d\setminus B$ by $U\setminus B$, etc. In particular, in the decomposition \eqref{eq:decompexample},  $\mu_{x,y}$ must be replaced by $\mu^U_{x,y}$. 
We first note that the versions of Lemmas \ref{dec8.lemma1}, \ref{feb1.1}, and \ref{dec8.lemma2} for $\mu^U$ follow automatically since $\mu^{U}$ is a restriction of $\mu$ and these results concern only upper bounds. Restricting to $U$ changes the definition of $\Phi_a$ (we write the new quantity as $\Phi_a^U$), resulting in that one cannot directly apply results from Section \ref{sec:3} as some exact scaling properties no longer hold.  To handle this, we see that by Lemma \ref{dec8.lemma2} and the new definition of $b$,  $|\Phi^U_a-\Phi_a|\leq e^{-ck}$ for some $c>0$ and note that Proposition \ref{prop102} still works in this setting if we set $V_0=U^c$ in \eqref{eq:quin} instead.
Hence, \eqref{eq:1.1final} still holds  with  $\Phi^U_a$  in place of $\Phi_a$. 

    We now turn to the two-point estimate.
    \begin{thm}\label{groundhogU}
     There exists $u> 0$
    such that  if $V \in \dyadic$, $V\subset U$, and $z,w \in V$, then with $e^{-b}=\dist(0,\e,z,w,\partial U)> 0$,
    the limit in  \eqref{eq:G2defU} exists, and, furthermore,   
   if $s \geq b+1$ and $0 \leq \rho \leq 1$, then 
\begin{equation*}
\mu^U\left[J_s(z) \, J_{s +\rho}(w)
   \right]  
    = \greencut_U(z,w)  \, 
      [1+O( e^{(b-s)u})].
\end{equation*}    
   Moreover,  there exists $0<c_{U,V}<C_{U,V} < \infty$ such that  
     \begin{equation*}
     c_{U,V} \, |z-w|^{-\eta}\leq \greencut_U(z,w) \leq C_{U,V} \, |z-w|^{-\eta}. 
\end{equation*}
    \end{thm}
   Theorem~\ref{groundhogU}  follows with similar modifications from the proof of  Theorem~\ref{groundhog}.
   
   Finally we turn to the existence of Minkowski content.
 \begin{thm} \label{thm1.3U}  Suppose  $d=2,3$ and $V$ is a {bounded Borel} subset of $\R^d$ such that $\partial V$ has zero $(d-\epsilon)$-Minkowski
  content for some $\epsilon > 0$.  Then, $\mu^U$-a.s.\
   the Minkowski
 content $ 
 \Cont_{\delta}(\A_\gamma \cap V)$ exists and 
 equals $J_V$.   Moreover,
 \[  \mu^U\left[\Cont_{\delta}(\A_\gamma \cap V)\right]
  =   \mu^U[J_V] = \int_V \greencut_U(z) \, dz.\]
 In particular, $\mu^U$-a.s., this holds for all   $V \in \dyadic$.
Moreover,
 if $V \in \dyadic$ and $V\subset U$, 
 \[ 
    \mu^U\left[\Cont_{\delta}(\A_\gamma \cap V)^2\right] =  \mu^U[J_V^2] =  \int_V \int_V \greencut_U(z,w) \, dz\, dw < \infty.\]
 \end{thm}
 
It is not difficult to see that the proof of Theorem \ref{theorem.dec16} can be adapted to the case of the restricted measure. Note in particular that \eqref{eq:T1c2} is not essential for the proof and that the argument in the last paragraph of the proof of Proposition \ref{prop:generalMC} still works with the above requirement on $V$. 
{
Once we obtain Theorem \ref{thm1.3U}, we can run an argument similar to that of Appendix \ref{se:ApA}, to conclude an analog of \eqref{eq:Borel0}, namely that one can induce a random non-atomic Borel measure supported on $\cA_\gamma$ that satisfies an analog of \eqref{eq:Borel}.

We now turn to general domains in two dimensions. Let $D$ be a simply connected planar domain and let $x,y$ be distinct points in $D$. One can find some disk or half-plane $U$ containing $0,\e$ such that there exists a conformal transformation $f: U \rightarrow D$  with $f(0) = x, f(\e) = y$.
Then we can define the path measure between 
$x$ and $y$ in $D$  to be the law of the image of the path measure in $U$ from $0$ to $\e$ (with time changed appropriately).
The cut points  of the image curve (denoted by $\cA_D$) are exactly the image of the cut points of the original curve.  
We therefore get the analog of Theorems \ref{thm1.1U} -- \ref{thm1.3U} for path measures in any domain. Note that $B(z,\epsilon)$ will not be exactly mapped to $B(f(z),f'(z)\epsilon)$, but as conformal maps 
can be locally approximated by the composition of a translation, a rotation, and a dilation, }
\eqref{eq:G1defU} and \eqref{eq:G2defU} still remain valid and we obtain the following relations:
The function $\greencut_D(\zeta ;z,w)$ satisfies the scaling rule
\begin{equation}\label{eq:scalingfirst}
 \greencut_D(z ;x,y)  = |(f^{-1})'(z)|^{5/4} \, 
 \greencut_{U}(f^{-1}(\zeta)).
\end{equation} 
Repeating the argument in Appendix \ref{se:ApA} we can argue that the Minkowski content defines a measure on the cut points and get analogs of \eqref{eq:Borel0} and \eqref{eq:Borel}.}
 \subsection{Half-plane excursions in $\C$}\label{excsec}
 \newcommand {\Half} {{\mathbb H}}
 \renewcommand {\Im}  {{\rm Im}}
 
 We have chosen to concentrate on the path measure $\mu_{0,\e}$ so far for convenience.  In \cite{HLLS}, we need the corresponding results for half-plane excursions in
 ${\mathbb C}$, which we state now. Since the existence of the limit defining Minkowski content is an almost sure property, it is immediate to see that this limit also exists for locally absolutely continuous measures.  A half-plane excursion  is a complex Brownian motion ``started at $0$ going to infinity staying in the upper half-plane $\Half$''. 
There are several ways to construct it, e.g.\ by considering the boundary-to-boundary measure defined in \eqref{eq:bdrytobdry} and then doing a conformal transformation and normalization. Alternatively, one can construct it	as an $h$-process with the harmonic function $h(z) = \Im(z)$.  
Let $X$ be a standard 1-dimensional Brownian motion and let $Y$ be an independent 3-dimensional Bessel process, both of which start from 0. 
Then $(X,Y)$ has the law of a half-plane excursion. We refer to~\cite{lawler-book} for more background on half-plane excursions.

  Here let us write $\Prob$ for the probability measure on half-plane excursions.  This is
 a probability measure on curves $\gamma:(0,\infty) \rightarrow \Half$ with 
 $\gamma(0^+) = 0$ and $\lim_{t\rta\infty}\gamma(t)=\infty$.
 This curve is transient, and hence we can talk about
 cut times and cut points.   
Given a compact set $U\subset \bbH$, on the event $\cA\cap U\neq \emptyset$, let $x,y\in\partial U$ be
 the first and last visit to $U$ by $\gamma$  respectively.
Then the segment of $\gamma$ between $x$ and $y$ has the law of  
$\mu_{x,y}$ conditioned to stay in $\bbH$. 

Let $\A\subset\Half$
denote the set of cut points.   As before, for $z\in \bbH$ we let
\[  J_{s}(z) = e^{5s/4} \, 1\{\dist(z,\A) \leq e^{-s}\}\quad\mbox{ and }\quad J_{s,V} = \int_V J_s(z) \, dz. \]

Similarly as Theorems~\ref{thm1.1U} -~\ref{thm1.3U}, we have the following variants of Theorems~\ref{theorem.dec16.2} - \ref{theorem.dec16}  in this setting.
 \begin{thm}\label{thm:excursion}
Suppose $V$ is a bounded Borel subset of $\Half$ such that
for some $\epsilon > 0$, the $(2-\epsilon)$-dimensional Minkowski content of $\partial
V$ is zero.  Then with probability one, the limit $J_V=\lim_{s \rightarrow \infty}J_{s,V}$ exists and gives the $(3/4)$-dimensional Minkowski content of $\A \cap V$.
In particular, with probability one, this holds simultaneously for every $V$ which is a dyadic square contained in $\Half$.

For $z,w\in\bbH$, $z\neq w$, the following limits exist 
\begin{equation}\label{eq:local}
\greencut_\bbH(z):=\lim_{s\to\infty}\E[J_s(z)]
\qquad\text{and}\qquad \greencut_\bbH(z,w):=\lim_{s\to\infty}\E[J_s(z)J_s(w)]
\end{equation}
For any compact set $V\subset\bbH$, there exists a constant $c_V>0$ such that $\greencut_\bbH(z,w) \leq c_V \, |z-w|^{-5/4}$ for $z,w\in V$ with $z\neq w$.
  
Moreover, $\{J_V\}$ extends to a non-atomic random Borel measure $m$ on $\Half$ such that for all Borel sets $U,U'\subset \bbH$,
\[
\E\left[m(U)\right] = \int_U \greencut_\bbH(z) \, dz
\quad\textrm{ and }\quad
\E\left[m(U)m(U')\right] = \iint_{U\times U'} \greencut_\bbH(z,w) \, dz\, dw.
\]
\end{thm}
For simplicity, in~\eqref{eq:local} we did not spell out the quantitative error term as in Theorems~\ref{thm1.1U} and~\ref{groundhogU}. However, similar quantitative statements can be easily formulated.
\begin{proof}[Proof of Theorem~\ref{thm:excursion}]
We only explain how to obtain the one-point estimate in \eqref{eq:local} from a slight variant of Theorem~\ref{thm1.1U}.
The rest of Theorem~\ref{thm:excursion} follows from more straightforward adaptations as in Theorems~\ref{groundhogU} and~\ref{thm1.3U}.

Consider a conformal map $\phi$ from $\D$ to $\bbH$ such that 
$\phi(-\e)=0$ and $\phi(\e)=\infty$. Then $\phi\circ\mu^{\D}_{-\e,\e}$ gives the law of a constant multiple of the half-plane excursion. 
Let $\greencut_\D(\cdot)$ be the cut-point Green's function in $\D$ from $-\e$ to $\e$. 
{By the discussion at the end of Section \ref{sec:bounded}, conformal invariance gives that
\begin{equation}\label{eq:scaling-rule}
\greencut_{\bbH}(\phi(\zeta))=| \phi'(\zeta)|^{5/4}\greencut_\D(\zeta ) /K
\end{equation}
where the normalizing constant
$K=\|\mu^{\D}_{0,\e}\|$.
Therefore  it suffices to prove the analog of   \eqref{eq:local} for $\mu^{\D}_{-\e,\e}$.} 

{Without loss of generality we only consider the case $z=0$. 
We now decompose $\omega$ under $\mu^{\D}_{-\e,\e}$ as $\omega^1\oplus\omega^2\oplus\omega^3$, where
\begin{itemize}
\item $\omega^1$ is $\omega$ stopped at the first entrance of $B(0,1/3)$;
\item $\omega^2$ is $\omega$ from the first entrance of $B(0,1/3)$ to the last exit of $B(0,1/2)$;
\item $\omega^3$ is $\omega$ from the last exit of $B(0,1/2)$ until the very end.
\end{itemize}
Conditioning on a pair of non-intersecting paths $\omega^1$ and $\omega^3$, the law of $\omega^2$ is governed by an interior-to-interior path measure in a disk. Hence we can rerun the argument in Section \ref{sec:bounded} and obtain a conditional version of Theorem \ref{thm1.1U},  treating $\D^c$, $\omega^1$ and $\omega^3$ as initial configurations when we apply separation lemmas or coupling results (e.g., to obtain an analog of \eqref{eq:1.1final}, in our case $V_0$, $V_1$, and $V_2$ should be set to $\D^c$, $\omega^1$, and $\omega^3$, respectively, in \eqref{eq:quin}) and consider non-intersection events (e.g., in an analog of \eqref{eq:gkdefinition}, one needs to take 
$\omega^1\oplus\gamma_1$ and $\gamma_2\oplus\omega^3$ instead of $\gamma_1$ and $\gamma_2$). Finally, integrating out $\omega^1$ and $\omega^3$ and noting that the error bounds are uniform  regardless of the initial configurations (see e.g. Proposition \ref{prop102}), we obtain the one-point estimate as desired.} 
\end{proof}

While we do not know the function $\greencut_\Half(\cdot)$ exactly, we can give  its asymptotics up to a multiplicative constant by using the exact value
of the Brownian half-plane intersection exponent. 

	\begin{prop}\label{prop:Green-half}
		The following hold for $x,y>0$ and $z\in\bbH$,
		\[     \greencut_\Half(x + i )   \asymp (x+1)^{-10/3} ,\]
		\[  \greencut_\Half (y(x+i)) = y^{-5/4} \, \greencut_\Half(x+i )  
		\asymp y^{-5/4} \, (x+1)^{-10/3}, \mbox{ and }\]
		\[  \greencut_\Half(z) \asymp [\Im(z)]^{-5/4} \, [\sin(\arg z)]^{10/3}.\]
	\end{prop}
	\begin{proof}

		The second claim follows from scaling of the first claim, and the third claim is a reformulation of the second claim. To see the the first claim,
		consider the M\"obius transform $\phi:\bbH\to\bbH$ given by $\phi(z)=\frac{z-|x+i|}{z+|x+i|}$ so that $\phi(0)=-\phi(-\infty)=-1$ and $\Re(y)=0$,
		where $y=\phi(x+i)$. See Figure \ref{fig:halfplane}.
			\begin{figure}
	\centering
	\includegraphics[scale=1]{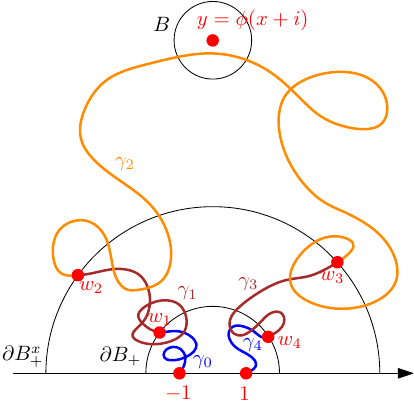}
	\caption{Illustration of the proof of Proposition \ref{prop:Green-half}.}
	\label{fig:halfplane}
\end{figure}
		Note that $\Im(y)\sim |\phi'(y)|\sim 2|x|$. By~\eqref{eq:scaling-rule}, the first claim is equivalent to
		$|x|^{5/4}\greencut_\bbH(y ;-1,1) \asymp   |x|^{-10/3}$. Since $0<|\mu_{-1,1}^{\bbH}|<\infty$,
		it is sufficient to show that  
		\begin{equation}\label{eq:new-estiamte}
		(xe^{-s})^{-\frac54}\mu_{-1,1}^{\bbH}(E^s) \asymp x^{-10/3}  \qquad \textrm{for } e^s>x>100,
		\end{equation}
		where $E^s$ is the set of paths from $-1$ to $1$ in $\bbH$ with a cut point in the ball $B=B(y,xe^{-s})$.
		
		Let $B_+=B(0,2)\cap \bbH$ and  $B^x_+=B(0,x) \cap \bbH$.
		Note that the path decomposition Lemmas~\ref{lem:decomp1}---\ref{lem:decomp3} still hold with $D=\{z\in \bbH: |z|>a \}$ and $S=\{z\in\bbH: |z|=b \}$ for some $b>a\ge 0$ since the exact same proofs as before work. Repetitively applying these lemmas as in Appendix~\ref{subsec:int-bdy}, we have  the following path decomposition
		\eqbn
		\begin{split}
			&\mu^\bbH_{-1,1} -\mu^{\bbH\setminus B}_{-1,1}\\
			&\,\,= \int \mu^{B_+}_{-1,w_1}  \oplus \mu^{B^x_+}_{w_1,w_2} \oplus (\mu^{\bbH}_{w_2,w_3}-\mu^{\bbH\setminus B}_{w_2,w_3}) \oplus \mu^{B^x_+}_{w_3,w_4}\oplus \mu^{B_+}_{w_4,1} \, \sigma(dw_1, dw_2,dw_3,dw_4),
		\end{split}
		\eqen
		where the integral is over $w_1,w_4\in \bdy B_+$ and $w_2,w_3\in \bdy B_+^x$.
		In other words, given a path $\gamma$ in the support of $\mu^\bbH_{-1,1} -\mu^{\bbH\setminus B}_{-1,1}$, which intersects $B$, we write it as $\gamma=\gamma_0\oplus\gamma_1\oplus\gamma_2\oplus\gamma_3\oplus\gamma_4$, where $w_1$ (resp., $w_4$) is the first (resp., last) visit of $\partial B_+\setminus\R$, and $w_2$ (resp., $w_3$) is the first (resp., last) visit of $\partial B_+^x\setminus\R$. Here for $i=0,1,2,3,4$ the end-points of $\gamma_i$ are $w_i$ and $w_{i+1}$ with the convention that $w_0=-1$ and $w_5=1$.
		
		Let $E^s_{w_2,w_3}$ be event that $\gamma_2$  has a cut point in $B$. By Observation~\ref{obs:upper},
		there exists $C>0$ not depending on  $e^s>x>100$ and $w_2,w_3\in \bdy B^x_+$ such that 
		\begin{equation}\label{eq:upper-cut}
		(xe^{-s})^{-\frac54}\mu_{w_2,w_3}^{\bbH}(E^s_{w_2,w_3})\le (xe^{-s})^{-\frac54}\mu_{w_2,w_3}(E^s_{w_2,w_3})\le C.
		\end{equation}
		On the other hand,  consider the measure
		$$\pi_{w_1,w_4}=\int_{{\bdy B^x_+ \cap \bbH}}    \mu^{B^x_+}_{w_1,w_2} \sigma(dw_2)  \times \int_{{\bdy B^x_+ \cap \bbH}} \mu^{B^x_+}_{w_4,w_3} \sigma(dw_3)$$ on the pair $(\gamma_1,\gamma^{\textrm R}_3)$.
		By \cite[equation (2)]{Law1998} and~\cite{LSWTrilogy1}, $\pi_{w_1,w_4}({\gamma_1\cap \gamma^{\textrm R}_3 = \emptyset}) \lesssim x^{-10/3}$ uniformly in $w_1,w_4$.  
		Combined with~\eqref{eq:upper-cut}, we get  
		\[
		(xe^{-s})^{-\frac54}\mu_{-1,1}^{\bbH}(E^s)  \lesssim |x|^{-10/3}\int_{\bdy B_+\times \bdy B_+} \|\mu^{B_+}_{-1, w_1}\| \|\mu^{B_+}_{w_4,1}\| \,\sigma(dw_1,dw_4)\lesssim |x|^{-10/3}.
		\]
		
		It remains to prove the lower bound in~\eqref{eq:new-estiamte}.
		Given a realization of $\gamma_0$ and $\gamma_4$, let $W(\gamma_0,\gamma_4)$ be  the set of realizations of $(\gamma_1,\gamma_3)$ such that 
		\[
		\gamma_1\cap \gamma_4=\gamma_3\cap \gamma_0=\emptyset,\quad  \dist(\gamma_1\cap A_x, \gamma_3\cap A_x)\ge 0.1x, \quad 
		{\textrm{and}\quad \gamma_1\cap \gamma_3=\emptyset,}
		\]
		where $A_x$ is the annulus $\{ z\in B_x: |z|>x/2\}$.
		
		Let $V_{0,4}=\{(\gamma_0,\gamma_4):  \dist(\gamma_0,\gamma_4)>0.1, \; \Im(w_1)\ge 0.1, \textrm{and }\Im(w_2)\ge 0.1 \}$.
		By the half-plane variant of the separation lemma,
		there exists $c_1>0$ such that $\pi_{w_1,w_4}({W(\gamma_0,\gamma_4)})\ge  c_1|x|^{-10/3}$ for all $(\gamma_0,\gamma_4)\in V_{0,4}$ and $x\ge 100$.
		On the other hand, let $E^s_{\gamma_1,\gamma_3}$ be the event that $\gamma_1\oplus\gamma_2\oplus\gamma_3$ has a cut point in $B$. 
		Then by a similar argument as how we argue the positivity of $c_*$ in \eqref{eq:cstardef} in Section \ref{sec:cstar}, there exists $c_2>0$ such that
		\begin{equation}\label{eq:lower-cut}
		(xe^{-s})^{-\frac54}\mu_{w_2,w_3}^{\bbH}(E^s_{\gamma_1,\gamma_3})\ge c_2
		\end{equation}
		if $\dist(\gamma_1\cap A_x, {\gamma_3\cap A_x})\ge 0.1x$  and $e^s>x>100$. Therefore
		\eqbn
		\begin{split}
			&(xe^{-s})^{-\frac54}\mu_{-1,1}^{\bbH}(E^s)\\
			&\qquad\ge c_1c_2 |x|^{-10/3}\int_{\bdy B_+\times \bdy B_+} \mu^{B_+}_{-1, w_1} \times \mu^{B_+}_{w_4,1}(\{(\gamma_0,\gamma_4)\in V_{0,4} \} )\,  \sigma(dw_1,dw_4),
		\end{split}
		\eqen
  which is $\gtrsim |x|^{-10/3}$. This gives the desired lower bound in~\eqref{eq:new-estiamte}.
	\end{proof}

\appendix
{\section{Minkowski content induces a measure}\label{se:ApA}}
In this appendix, we discuss how to induce a measure from the Minkowski content of $\cA$, and {prove claims \eqref{eq:Borel0} and \eqref{eq:Borel} in Section \ref{sec:intro}.

Keeping the same notation as in Section \ref{sec:intro}, }
we first define $\wt\cD_n$ as the union of the empty set and sets $V$ of the form \eqref{eq:dyadicV} with  $\left(\frac{k_i}{2^n},\frac{k_i+1}{2^n}\right]$ replaced by $\left(\frac{k_i}{2^n},\frac{K_i}{2^n}\right]$, $\left(\frac{k_i}{2^n},\infty
\right)$, or $\left(-\infty,\frac{k_i}{2^n}\right]$, $k_i,K_i\in\mathbb{Z}$ for $i=1,\dots,d$ {and then write $\wt\cD=\cup_{n\in\mathbb{Z}^+ }\wt\cD_n$}. We observe that $\wt\cD$ forms a semialgebra. We will use  standard measure-theoretic results (see e.g.\ \cite[Theorem A.1.1]{Durrett}) to argue that $\nu$ defined below can be uniquely extended  
to a random Borel measure $\mu$-a.s.

{Recall the definition of $\cD$ in \eqref{eq:dyadico}. For $V\in\cD$, set $\nu(V) := \Cont_{\delta}(\A_\gamma \cap V) $ and for $V\in \wt\cD\setminus\cD$, we will define $\nu(V)$ as follows:
\begin{itemize}
\item If $V\in \cD_n$,
set
\begin{equation}\label{eq:semialgebra}
\nu(V): = \sum_{j\in J}\nu(V_j)
\end{equation}
with  $V_j\in\dyadic$, $j\in J$ a finite (if $0,\e\notin\overline{V})$ or countable (if $\overline{V}$ contains $0$ or $\e$) partition of $V$. However, in the latter case, we restrict to partitions such that for any $\iota>0$, the number of cubes in $J$ not contained in $B(0,\iota)$ or $B(\e,\iota)$ is finite so that for some $\epsilon>0$,
\begin{equation}\label{eq:nocontent}
 \Cont_{d-\epsilon}(\cup_{j\in J}\partial V_j)=0.
 \end{equation}
\item If 
\begin{equation}\label{eq:VDn}
V\in \wt\cD_n\setminus \cD_n,
\end{equation}
set $\nu(V)$ similarly as in \eqref{eq:semialgebra}, but with $V_j\in \cD_n$. 
\end{itemize}}
In any of the cases, if the right side of \eqref{eq:semialgebra} is infinite we also set $\nu(V)$ to be infinite, although we expect $\Cont{_\delta}(\A)$ to be $\mu$-a.s.\ finite by an argument similar to that in the proof of Theorem \ref{theorem.dec16} at the end of Section \ref{gensec}. {By Theorem \ref{theorem.dec16} and \eqref{eq:nocontent}, and the fact that $\A$ is {compact} $\mu$-a.s., we have  $$\Cont_{\delta}(\cup_{j\in J}\partial V_j\cap\A)=0,\quad \mbox{ $\mu$-a.s.}$$ for any partition $J$ as above. {In light of this, and also recalling} the definition of Minkowski content, one can argue that $\nu(V)$ is well-defined $\mu$-a.s., i.e.\ independent of the choice of partition.}
{One can also check} conditions (i) and (ii) in Theorem A.1.1 of \cite{Durrett} are both satisfied. 
Hence $\nu$ can be uniquely extended to a (random) $\sigma$-finite compactly supported  Borel measure on $\R^d$ $\mu$-a.s. {In fact, it is not difficult to see that $\nu$ is $\mu$-a.s.\ supported on $\cA$.}
Moreover, for all Borel sets $U,U'$,
\[
  \mu\left[\nu(U)\right] = \int_U G^{\rm cut}(z) \, dz\qquad\textrm{and}\qquad \mu\left[\nu(U)\nu(U')\right] = \iint_{U\times U'} G^{\rm cut}(z,w) \, dz\, dw.
\]

Restricted to a fixed $V\in \cD$, {by \eqref{eq:T1c2} and \eqref{eq:T2c2}, one has $\mu[\nu(U)]\asymp {\rm Vol}(U)$  and $\mu[\nu(U)^2]\asymp{\rm Vol}(U)^2$, hence $\mu[\nu(U)^2]\asymp \mu[\nu(U)]^2$}  for all Borel $U\subset V$. {We now claim that} $\nu$ is non-atomic on $V$: this follows by fixing an arbitrary $\eps>0$ and using a union bound and Chebyshev's inequality to prove that for all $\delta$-sized squares in a bounded region the probability that the $\nu$-mass of some square is larger than $\eps$ goes to zero as $\delta\rta 0$.
Varying $V$, we see that $\nu$ is non-atomic on $\R^d$.  
\section{Brownian path decomposition}
\label{appendix}
In this appendix we provide more details on the Brownian path decomposition in Section~\ref{brownsec}. 
Throughout this section we assume $d\in \{2,3\}$ and $D\subset\R^d$ is such that $\partial D$, if nonempty,  is a disjoint union of {lines and circles (for $d=2$) or planes and spheres (for $d=3$)}. 
\subsection{Brownian path measure from conditioning}\label{subsec:interior}
Suppose $D\neq \R^{d}$ and $x,y\in D$.  We first explain that $\mu^{D}_{x,y}$ can be obtained from Brownian motion by a conditioning. Let $S_\eps(y):=\{ \zeta\in \R^d: |\zeta-y|=\eps  \}$. 
Choose $\eps>0$ small enough such that $S_\eps(y)\subset D$ and $\eps<|x-y|$. 
Let $\mu^D_{x,y,\eps}$ denote the 
measure of a Brownian motion in $\R^d$ started from $x$ stopped
when it reaches $S_\eps(y)$, restricted to the event $E_{x,y,\eps}$
that the Brownian motion reaches $S_\eps(y)$ before leaving $D$.  
Let $G_2(x) = - \frac 1{\pi} \, \log |x|$ for $x\in\R^2$ and $G_3(x)=  \frac{1}{2\pi\, |x|}$ for $x\in \R^3$. 
\begin{lemma}\label{lem:lim}
Suppose $D\neq \R^{d}$ and $x,y\in D$. Then 
\(\mu^{D}_{x,y} = \lim_{\eps\to 0} G_d(\eps) \mu^D_{x,y,\eps}\) 
\end{lemma}
\begin{proof}
The probability measure $\mu^{D}_{x,y} /\|\mu^{D}_{x,y}\|$ is an example of a so-called  \emph{conditioned Brownian motion} in the sense of Doob; see \cite[Section~6]{Doob1957}. 
In particular,  using Doob-h transform,  we see that  $ \mu^D_{x,y,\eps}/\|\mu^D_{x,y,\eps}\|$ weakly converge to $\mu^{D}_{x,y} /\|\mu^{D}_{x,y}\|$. Now proving Lemma~\ref{lem:lim} reduces to showing that 
$\|\mu^{D}_{x,y}\| = \lim_{\eps\to 0} G_d(0,\eps) \|\mu^D_{x,y,\eps}\|$.
Let $D_\eps$ be the component of $D\setminus S_\eps$ containing $x$. For fixed $y$ and $\eps$, the function $x\mapsto\|\mu^D_{x,y,\eps}\|$ is the unique harmonic function on $D_\eps$ with boundary value $1$ on $S_\eps(y)$ and $0$ on $\partial D$. Since $\|\mu^{D}_{x,y}\|=G_D(x,y)$, showing $\|\mu^{D}_{x,y}\| = \lim_{\eps\to 0} G_d(\eps) \|\mu^D_{x,y,\eps}\|$ is an easy exercise in harmonic function, which we leave to the reader.
\end{proof}

\subsection{First and last passage decompositions}
\label{appendix2}
Suppose $x,y\in D\cup \partial D$ and  $S\subset D$ is a {line or circle (for $d=2$) or plane or sphere (for $d=3$)} such that $x\notin S$ and $y\notin S$.  Let $D_x$ (resp., $D_y$) denote the component  of $D\setminus S$ whose closure  contains $x$  (resp., $y$). We make the convention that   $\mu_{x,y}^{D_y} =\mu_{x,y}^{D_x}=0$  if $D_x\neq D_y$.
\begin{lemma}[First passage decomposition]\label{lem:decomp1}
\(\mu^{D}_{x,y} =  \mu_{x,y}^{D_x} +\int_S [\mu_{x,\zeta}^{D_x} \oplus \mu_{\zeta,y}^D] 
\, \sigma(d\zeta)\).
\end{lemma}
\begin{proof}
We first assume that $D\neq \R^{d}$ and $x,y\in D$.
Choose $\eps>0$ small enough such that $S_\eps(y)\cap S=\emptyset$.
Let $\mu_{x,y,\eps}^{D_x}$ and $\mu_{\zeta,y,\eps}^D$ be defined in the same way as $\mu^D_{x,y,\eps}$.
By the strong Markov property of Brownian motion, we have that 
\begin{equation}\label{eq:eps-decomp}
 \mu^{D}_{x,y,\eps} =  \mu_{x,y,\eps}^{D_x} +\int_S [\mu_{x,\zeta}^{D_x} \oplus \mu_{\zeta,y,\eps}^D] 
 \, \sigma(d\zeta).
\end{equation}
Here we use the  fact that  $\int_S\mu_{x,\zeta}^{D_x}\, \sigma(d\zeta)$  
equals the
measure of a Brownian motion started from $x$ stopped
when it reaches $S$, restricted to the event that the path reaches $S$ before leaving $D$; 
see \eqref{eq:int-bdy}.
Moreover, let $\mu^{D}_{x,S,y,\eps}$ be the measure on  paths drawn from $\mu^{D}_{x,y,\eps} -\mu_{x,y,\eps}^{D_x}$ stopped at hitting $S$. 
Then  
\(\mu^{D}_{x,S,y,\eps}=  \int_S \mu_{x,\zeta}^{D_x} \cdot  \|\mu_{\zeta,y,\eps}^D\| \, \sigma(d\zeta)\) 
by Bayes rule. 

Now this case of Lemma~\ref{lem:decomp1} follows from Lemma~\ref{lem:lim} by multiplying by $G_d(\eps)$ and taking the limit as $\eps\rta 0$ on both sides of 	equation~\eqref{eq:eps-decomp}.

We now assume that $D\neq \R^{d}$, $x\in D$ and $y\in \partial D$. Let $\n_y$ be
the inward unit normal at $y$ into $D$. 
Then Lemma~\ref{lem:decomp1} holds  with $y_\eps=y+ \eps\n_y$ in place of  $y$.  
Sending $\eps\rta 0$ after renormalization, we get the desired statement for $y$.

If $D\neq \R^{d}$, $x\in \partial D$ and $y\in \partial D$, then Lemma~\ref{lem:decomp1} holds  with $x_\eps=x+ \eps\n_x$ in place of  $x$.
Again by sending $\eps\rta 0$, we get the desired statement for $x$.

The statement for $D=\R^d$ and $x,y\in \R^d$ can be obtained by  taking the $R\to\infty$ limit for the case $D=\{z\in \R^d: |z|<R \}$.
\end{proof}
\begin{lemma}[Last passage decomposition]\label{lem:decomp2}
 \( \mu^{D}_{x,y} =  \mu_{x,y}^{D_x} + \int_{S} [\mu_{x,\zeta}^D \oplus \mu_{\zeta,y}^{D_y}
]\, \sigma(d\zeta)\).
\end{lemma}
\begin{proof}
Switching $x,y$ in Lemma~\ref{lem:decomp1} we get \(\mu^{D}_{y,x} =  \mu_{y,x}^{D_y} +\int_S [\mu_{y,\zeta}^{D_y} \oplus \mu_{\zeta,x}^D] 
 \, \sigma(d\zeta)\). Reversing time in this equation, we have 
 \(\mu^{D}_{x,y} =  \mu_{x,y}^{D_y} +\int_S [\mu_{x,\zeta}^D \oplus \mu_{\zeta,y}^{D_y}]
 \, \sigma(d\zeta)\).
 Note that $\mu_{x,y}^{D_y} =\mu_{x,y}^{D_x} $, which is $0$ if $D_x\neq D_y$. This gives Lemma~\ref{lem:decomp2}. 
\end{proof}
We now deduce the case when one of the end-points is on $S$, which is either a {line or circle (for $d=2$) or plane or sphere (for $d=3$)}.
\begin{lemma}\label{lem:decomp3}
If $z\in S$, then
\[
\mu^{D}_{x,z} =\int_{S} [\mu_{x,\zeta}^{D_x} \oplus \mu_{\zeta,z}^D]
\, \sigma(d\zeta)\quad\textrm{and}\quad \mu^{D}_{z,x} =\int_{S} [\mu_{z,\zeta}^D\oplus  \mu_{\zeta, x}^{D_x} ]
\, \sigma(d\zeta).
\]
\end{lemma}
\begin{proof}
Assume $D_x\neq D_y$ in Lemma~\ref{lem:decomp1} so that $\mu_{x,y}^{D_x} =0$. 
Sending $y$ to $z$, we get the first identity. The second one is given by the time reversal of the first one.
\end{proof}

\subsection{Proof path decompositions in Section~\ref{brownsec}}\label{subsec:int-bdy} 
All of the path decomposition identities 
in Section~\ref{brownsec} (namely, \eqref{eq:decomp11}---\eqref{jan22.1})  can be obtained by repeatedly applying Lemmas~\ref{lem:decomp1}---\ref{lem:decomp3}. 

We start by proving~\eqref{eq:decomp11}---\eqref{eq:decomp13}. The first two equations follow  from Lemmas~\ref{lem:decomp1} and~\ref{lem:decomp2}, respectively. 
Note that  \(\mu^{D}_{x,\zeta_2} =\int_S   [ \mu_{x,\zeta_1}^{D_1} \oplus \mu_{\zeta_1,\zeta_2}^D]  \, \sigma(d\zeta_1)\) by Lemma~\ref{lem:decomp3}.
Plugging this to~\eqref{eq:decomp12} yields~\eqref{eq:decomp13}.

We now prove~\eqref{eq:decomp21} and~\eqref{eq:decomp22}. First of all,
by Lemma~\ref{lem:decomp3} with $D=\R^d$, we have \(\mu_{x,y}=\int_{S_2}[\mu^{D_1}_{x,\zeta_2}  \oplus
\mu_{\zeta_2,y} ] \,  \sigma(d\zeta_2)\). By Lemma~\ref{lem:decomp1} we have \(\mu^{D_1}_{x,\zeta_2} = 
\int_{S_1} [\mu_{x,\zeta_1} ^{D_1}\oplus \mu^{D}_{\zeta^1,\zeta^2}  ] \,  \sigma(d\zeta_1)\).
This give~\eqref{eq:decomp21}. Again by  Lemma~\ref{lem:decomp3},  we have \(\mu_{x,y} =\int_{S_1} 
[\mu_{x,\zeta_1} \oplus \mu^{D_2}_{\zeta_1, y} ] \,  \sigma(d\zeta_1)\). 
By Lemma~\ref{lem:decomp1}, we have
\( \mu^{D_2}_{\zeta_1, y}=   \int_{S_2}[\mu^{D}_{\zeta^1,\zeta^2} \oplus
\mu_{\zeta_2,y}^{D_2} ] \,  \sigma(d\zeta_2)\). This gives~\eqref{eq:decomp22}.

To prove~\eqref{eq:decomp3} and~\eqref{jan22.1}, first note that \(  \mu^D_{\zeta_1,\zeta_2} =  \int_{S_4}
[\mu^{D'}_{\zeta_1,\zeta_4}  \oplus \mu_{\zeta_4,\zeta_2}^D ] \,  \sigma(d\zeta_4)\) by Lemma~\ref{lem:decomp1}. Since  \(  \mu^{D'}_{\zeta_1,\zeta_4} = \int_{S_3}
[\mu_{\zeta_1,\zeta_3} ^{D'}\oplus \mu^{D_0}_{\zeta^3,\zeta^4} ] \,  \sigma(d\zeta_3)\)
by Lemma~\ref{lem:decomp2}, we get~\eqref{eq:decomp3}. Similarly, we have 
\( \mu^D_{\zeta_1,\zeta_4} = \int_{S_3} 
[\mu_{\zeta_1,\zeta_3} ^{D_3}\oplus \mu^{D }_{\zeta^3,\zeta^4}  ] \,  \sigma(d\zeta_3)\) by Lemma~\ref{lem:decomp1}, and 
\( \mu^D_{\zeta_1,\zeta_2} = \int_{S_4}
[\mu^D_{\zeta_1,\zeta_4}  \oplus\mu_{\zeta_4,\zeta_2}^{D_4} ] \,  \sigma(d\zeta_4)\) by Lemma~\ref{lem:decomp2}. This gives~\eqref{jan22.1}.

{
\section{Some Poisson kernel estimates}
In this appendix, we prove the Poisson kernel estimates \eqref{eq:C1} and \eqref{eq:C2} (\eqref{eq:harnackresults} and \eqref{eq:poissonbound} in this appendix) that are used in the proof of Theorem \ref{theorem.dec16}. 

Consider $d=2$ and let $D$ be the unit disk. Then there exists some $C<\infty$ such that for any $y\in\partial D$ and  $|z|\geq 2$, 
\begin{equation}\label{eq:harnackresults}
\| \mu^{D^c}_{y,z}\|\leq C.
\end{equation}
To see this, recall that $\mu^{D^c}_{y,z}$ is the limit of $\frac{1}{2\epsilon} \mu^{D^c}_{y+\epsilon {\bf n}_y,z}$ and that the Green's function is a conformally invariant quantity.
Consider the conformal map that takes $D$ to $D^c$, $y$ to $y$, and $0$ to $z$. As the norm of the derivative of this conformal map at $z$ is uniformly of order 1,  \eqref{eq:harnackresults} follows from \eqref{eq:poissonestimates}.

We now consider $d=3$ and still let $D$ denote the unit ball. We claim the following bound: there exists $C<\infty$, such that for any $y\in\partial D$ and  $|z|\geq 2$,
\begin{equation}\label{eq:poissonbound}
\| \mu^{D^c}_{y,z}\|\leq C\|\mu_{y,z}\|.
\end{equation}
To see this, we decompose both measure as follows, with $D'=B(1.5)$:
$$
\mu^{D^c}_{y,z}=\int_{x\in\partial D'} \mu^{D'\setminus D}_{y,x}\oplus \mu^{D^c}_{x,z}\sigma(dx)
$$
and
$$
\mu_{y,z}=\int_{x\in\partial D'} \mu^{D'}_{y,x}\oplus \mu_{x,z}\sigma(dx),
$$
and observe that $\|\mu^{D^c}_{x,z}\|\leq \|\mu_{x,z}\|$.
Then the claim follows from the fact that there exists some $C<\infty$ such that
$$
\int_{x\in\partial D'} \|\mu^{D'\setminus D}_{y,x}\|\sigma(dx)=C
$$
and that
$$
 \int_{x\in\partial D'} \|\mu^{D'}_{y,x}\| \sigma(dx)=1.
$$
}

\bibliographystyle{hmralphaabbrv}

\end{document}